\definecolor{Blue}{rgb}{0.3,0.3,0.9}
\newcommand{\sk}{\vspace{0.1in}}
\newtheorem{thm}{Theorem}[section]
\newtheorem{def-thm}[thm]{Definition-Theorem}
\newtheorem{cor}[thm]{Corollary}
\newtheorem{lem}[thm]{Lemma}
\newtheorem{def-lem}[thm]{Definition-Lemma}
\newtheorem{prop}[thm]{Proposition}
\newtheorem*{ThmA}{Theorem A}
\newtheorem*{ThmB}{Theorem B}
\newtheorem*{intro-conj1}{Conjecture}
\newtheorem*{intro-conj2}{Conjecture}
\newtheorem*{intro-conj3}{Conjecture}
\newtheorem*{intro-conj4}{Conjecture}
\theoremstyle{definition}
\newtheorem{defn}[thm]{Definition}
\theoremstyle{remark}
\newtheorem{rem}[thm]{Remark}
\numberwithin{thm}{section}
\numberwithin{equation}{section}
\newcommand{\cK}{\mathcal{K}}
\newcommand{\cO}{\mathcal{O}}
\newcommand{\Q}{\mathbf Q}
\newcommand{\Z}{\mathbf Z}
\newcommand{\bfT}{\mathbf T}
\newcommand{\rH}{{\rm H}}
\newcommand{\pp}{\mathfrak{p}}
\newcommand{\ppbar}{\overline{\mathfrak{p}}}
\newcommand{\frakp}{\mathfrak{p}}
\newcommand{\cI}{\mathcal{I}}
\newcommand{\bQ}{\mathbf{Q}}
\newcommand{\bZ}{\mathbf{Z}}
\newcommand{\bC}{\mathbf{C}}
\newcommand{\cR}{\mathbb{I}}
\newcommand{\F}{{\boldsymbol{f}}}
\newcommand{\G}{{\boldsymbol{g}}}
\DeclareMathOperator{\Hom}{Hom}
\DeclareMathOperator{\M}{M}
\newcommand{\pwseries}[1]{[[#1]]}
\def\k{\kappa}
\newcommand{\I}{\mathbb I}
\newcommand{\ro}{\mathfrak{O}_L}
\def\ac{{\rm ac}}
\def\cyc{{\rm cyc}}
\newcommand{\Tc}{{\mathbf{T}^{\dagger,{\rm ac}}}}
\newcommand{\unr}{{\hat{\bZ}_p^{\rm ur}}}
\newcommand{\fil}{\mathscr{F}}
\newcommand{\mat}[4]{\left(\begin{array}{cc}#1&#2\\#3&#4\end{array}\right)}
\begin{document}

\title
[The Iwasawa main conjectures for ${\rm GL}_2$ and derivatives of $p$-adic $L$-functions]{The Iwasawa main conjectures for ${\rm GL}_2$ and derivatives of $p$-adic $L$-functions}
\author[F.~Castella and X.~Wan]
{Francesc Castella and Xin Wan}
\address{Department of Mathematics, University of California, Santa Barbara, CA 93106, USA}
\email{castella@ucsb.edu}
\address{Morningside Center of Mathematics, Academy of Mathematics and Systems Science, Chinese Academy of Science, No.~55 Zhongguancun East Road, Beijing, 100190, China}
\email{xwan@math.ac.cn}

\thanks{During the preparation of this paper, F.C. was partially supported by the NSF grants DMS-{1801385} and DMS-{1946136}.}

\subjclass[2010]{11R23 (primary); 11G05, 11G40 (secondary)}


\date{January 12, 2020.}



\begin{abstract}
We prove under mild hypotheses the three-variable Iwasawa main conjecture for $p$-ordinary modular forms in the indefinite setting. Our result is in a setting complementary to that in the work  of Skinner--Urban, and it has applications to Greenberg's nonvanishing conjecture for the first  derivatives at the center of $p$-adic $L$-functions of cusp forms in Hida families with root number $-1$ and to Howard's horizontal nonvanishing conjecture. 
%
\end{abstract}

\maketitle

\setcounter{tocdepth}{2}
\tableofcontents


\section{Introduction}


Fix a prime $p>3$ and a positive integer $N$ prime to $p$. Let $\ro$ be the ring of integers of a finite extension $L/\bQ_p$, and let 
\[
\F=\sum_{n=1}^\infty\boldsymbol{a}_nq^n\in\cR\pwseries{q}
\]
be a Hida family of tame level $N$, where $\cR$ is a finite flat extension
of the one-variable Iwasawa algebra $\ro\pwseries{T}$ with fraction field $F_\cR$. Throughout this paper, we shall assume that $\cR$ is regular. 
Let $\cK$ be an imaginary quadratic field of discriminant prime to $Np$, and
let $\Gamma_\cK:={\rm Gal}(\cK_\infty/\cK)$ be the Galois group of the $\bZ_p^2$-extension of $\cK$.

From work of Hida \cite{hidaII}, 
there is a $3$-variable $p$-adic $L$-function 
\[
L_p^{\tt Hi}(\F/\cK)\in\cR\pwseries{\Gamma_\cK}
\] 
interpolating critical values $L(\F_\phi/\cK,\chi,j)$ 
for the Rankin--Selberg $L$-function attached to the classical specializations 
of $\F$ twisted by
finite order characters $\chi:\Gamma_\cK\rightarrow\mu_{p^\infty}$. Let
\[
\rho_{\F}:G_{\bQ}:={\rm Gal}(\overline{\bQ}/\bQ)\rightarrow{\rm Aut}_{F_{\cR}}(V_{\F})\simeq{\rm GL}_2(F_\cR)
\]
be 
the Galois representation associated to $\F$ (which we take to be the contragredient of the Galois representation first constructed in \cite{hida86b}),  
and let $\bar{\rho}_{\F}:G_{\bQ}\rightarrow{\rm GL}_2(\kappa_\cR)$, where $\kappa_\cR=\cR/\mathfrak{m}_\cR$ is the residue field of $\cR$, be the associated semi-simple residual representation. 

By work of Mazur and Wiles \cite{MW-families,wiles88}, upon restriction to a decomposition group $D_p\subset G_{\bQ}$ at $p$ we have
\[
\bar{\rho}_{\F}\vert_{D_p}\sim
\left(\begin{smallmatrix}\bar{\varepsilon}& *\\& \bar{\delta}\end{smallmatrix}\right)
\]
where the character $\bar{\delta}$ is unramified. Under the assumption that
\begin{equation}\label{ass:irred}
\textrm{$\bar{\rho}_{\F}$ is irreducible and $\bar\varepsilon\neq\bar{\delta}$},\tag{MT}
\end{equation}
one knows that there exists a $G_\bQ$-stable lattice $T_{\F}\subset V_{\F}$ which is free of rank two over $\cR$ with the inertia coinvariants $\fil^-T_\F$ being $\cR$-free of rank one. Set
\[
A_\F:=T_\F\otimes_\cR\cR^\vee,\quad
\fil^-A_\F:=(\fil^-T_\F)\otimes_\cR\cR^\vee,
\]
where $\cR^\vee:={\rm Hom}_{\rm cts}(\cR,\bQ_p/\bZ_p)$ is the Pontryagin dual of $\cR$, and define the Greenberg Selmer group ${\rm Sel}_{\tt Gr}(\cK_\infty,A_\F)$ by
\begin{equation}\label{eq:2Gr-intro}
{\rm Sel}_{\tt Gr}(\cK_\infty,A_\F):=\ker\biggl\{\rH^1(\cK_\infty,A_\F)\rightarrow
\prod_{w\nmid p}\rH^1(I_w,A_\F)\times\prod_{w\vert p}\rH^1(\cK_{\infty,w},\fil^-A_\F)\biggr\},
\end{equation}
where $w$ runs over the places of $\cK_\infty$. The Pontryagin dual
\[
X_{\tt Gr}(\cK_\infty,A_\F):={\rm Hom}_{\rm cts}({\rm Sel}_{\tt Gr}(\cK_\infty,A_\F),\bQ_p/\bZ_p)
\]
is easily seen to be a finitely generated $\cR\pwseries{\Gamma_\cK}$-module. 

In this paper we study the following instance of the Iwasawa--Greenberg main conjectures (see \cite{Greenberg55}).

\begin{intro-conj1}[{\bf Iwasawa--Greenberg main conjecture}]
The module $X_{\tt Gr}(\cK_\infty,A_\F)$ is $\cR[[\Gamma_\cK]]$-torsion, and 
\[
{\rm Char}_{\cR[[\Gamma_\cK]]}(X_{\tt Gr}(\cK_\infty,A_{\F}))=(L_p^{\tt Hi}(\F/\cK))
\]
as ideals in $\cR[[\Gamma_\cK]]$.
\end{intro-conj1}

Many cases of this conjecture are known by the work of Skinner--Urban \cite{SU} and \cite{Kato295}. As we shall explain below, in this paper we place ourselves in a setting complementary to that in \cite{SU}, obtaining the following new result towards the Iwasawa--Greenberg main conjecture. The imaginary quadratic field $\cK$ determines a factorization
\[
N=N^+N^-
\]
with $N^-$ being the largest factor of $N$ divisible only by primes inert in $\cK$. 

\begin{ThmA}
In addition to {\rm (\ref{ass:irred})}, assume that:
\begin{itemize}
\item{} $N$ is squarefree,
\item{} some specialization $\F_\phi$ is the $p$-stabilization of a newform $f\in S_2(\Gamma_0(N))$,
\item{} $N^-$ is the product of a positive even number of primes,
\item{} $\bar\rho_{\F}$ is ramified at every prime $q\vert N^-$,
\item{} $p$ splits in $\cK$.
\end{itemize}
Then $X_{\tt Gr}(\cK_\infty,A_\F)$ is $\cR[[\Gamma_\cK]]$-torsion, and 
\[
{\rm Char}_{\cR[[\Gamma_\cK]]}(X_{\tt Gr}(\cK_\infty,A_\F))=(L_p^{\tt Hi}(\F/\cK))
\]
as ideals in $\cR[[\Gamma_\cK]]\otimes_{\bZ_p}\bQ_p$.
\end{ThmA}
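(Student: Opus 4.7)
Because $p$ splits in $\cK$, the decomposition $\Gamma_\cK\simeq\Gamma_\cK^+\times\Gamma_\cK^-$ into cyclotomic and anticyclotomic parts is available, and my plan is to establish the two divisibilities of the main conjecture in $\cR[[\Gamma_\cK]]\otimes_{\bZ_p}\bQ_p$ separately and combine them. The essential new input comes from the \emph{indefinite} character of the setting: since $N^-$ is the product of an even number of primes inert in $\cK$, the quaternion algebra $B/\bQ$ ramified precisely at the primes dividing $N^-$ is split at the archimedean place, so the associated Shimura curve of level $\Gamma_0(N^+)$ carries a rich supply of CM points defined over ring class fields of $\cK$. Under the running hypotheses on $\bar\rho_\F$ at primes dividing $N^-$ and the existence of a classical weight-two specialization of $\F$, Howard's construction lifts the system of CM points through Hida's ordinary tower to produce a three-variable big Heegner class
\[
\kappa \in \rH^1\bigl(\cK,\,T_\F\otimes_\cR\cR[[\Gamma_\cK]]\bigr),
\]
which is the basic cohomological object driving the rest of the argument.

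Plugging $\kappa$ into the Hida-theoretic Kolyvagin/Euler-system machinery (as developed by Howard, Fouquet, Longo--Vigni, and subsequent refinements in the Shimura-curve setting), I would obtain the divisibility
\[
{\rm Char}_{\cR[[\Gamma_\cK]]}\bigl(X_{\tt Gr}(\cK_\infty,A_\F)\bigr)\ \supseteq\ \bigl(L_p^{\tt Hi}(\F/\cK)\bigr),
\]
provided the image of $\kappa$ under the Perrin-Riou big logarithm localized at the prime of $\cK$ above $p$ can be matched with $L_p^{\tt Hi}(\F/\cK)$. This matching is an \emph{explicit reciprocity law in three variables}, which I would obtain by interpolating Bertolini--Darmon--Prasanna type $p$-adic Waldspurger formulas along the Hida family; it is also what explains the appearance of \emph{derivatives} of $p$-adic $L$-functions in the applications, since in the indefinite setting the restriction of $L_p^{\tt Hi}$ to the anticyclotomic central line vanishes identically.

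For the opposite divisibility I would transfer the Eisenstein-congruence divisibility of Skinner--Urban from the \emph{definite} setting (where their $\mathrm{GU}(2,2)$ Klingen--Eisenstein construction applies directly) via a Jochnowitz-style level-raising congruence linking $\F$ to a companion Hida family on a definite quaternion algebra, with $N^-$ replaced by a product of an odd number of inert primes. Combining with the Heegner divisibility above, and comparing characteristic ideals at height-one primes of the regular ring $\cR[[\Gamma_\cK]]\otimes_{\bZ_p}\bQ_p$, then yields the full equality after inverting $p$---the $\otimes_{\bZ_p}\bQ_p$ in the conclusion accounting for the $p$-primary denominators introduced by the congruence-theoretic comparison. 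The principal obstacle will be the three-variable explicit reciprocity law: existing Waldspurger/Bertolini--Darmon--Prasanna type formulas typically operate across only one or two Iwasawa variables, and extending them simultaneously to all three---while correctly tracking integral structures, the Perrin-Riou map in families, and local conditions at the primes dividing $N^-$---is the deepest technical ingredient of the argument.
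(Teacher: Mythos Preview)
Your proposal has a genuine structural gap: the big Heegner point is not a three-variable class. Howard's construction (and its quaternionic extensions by Longo--Vigni, Fouquet) yields
\[
\mathfrak{Z}_\infty\in\rH^1\bigl(\cK,\,T_\F^\dagger\otimes_\cR\cR[[\Gamma_\cK^{\rm ac}]]\bigr),
\]
varying only in the anticyclotomic direction; there is no known Heegner class over the full $\bZ_p^2$-extension. More fundamentally, the explicit reciprocity law for Heegner points (the $p$-adic Waldspurger formula of Bertolini--Darmon--Prasanna and its families) relates $\mathfrak{Z}_\infty$ to the \emph{BDP} $p$-adic $L$-function $\mathscr{L}_\pp^{\tt BDP}(\F/\cK)$, not to $L_p^{\tt Hi}(\F/\cK)$. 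Indeed, you correctly note that in the indefinite setting the anticyclotomic restriction of $L_p^{\tt Hi}$ vanishes identically---so no reciprocity law can match a nontrivial class with it there. Thus the Heegner Euler system cannot directly bound $X_{\tt Gr}$ by $L_p^{\tt Hi}$ in three variables.

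The paper's route is quite different. The three-variable cohomology class is the \emph{Beilinson--Flach} class $\mathcal{BF}^\dagger\in\rH^1(\cK,\mathbf{T}^\dagger)$ of Kings--Loeffler--Zerbes (taking one Hida family to be CM), and its two explicit reciprocity laws simultaneously realize $L_p^{\tt Hi}(\F/\cK)$ at $\overline{\pp}$ and $\mathscr{L}_\pp(\F/\cK)$ at $\pp$. Via Poitou--Tate this gives an \emph{equivalence} between the Greenberg main conjecture for $L_p^{\tt Hi}$ and the $(\emptyset,0)$-main conjecture for $\mathscr{L}_\pp$. The latter is then proved: one divisibility comes from Wan's Eisenstein-congruence argument on ${\rm GU}(3,1)$ (not a level-raising transfer of Skinner--Urban's ${\rm GU}(2,2)$ work), and the opposite input is the anticyclotomic \emph{equality} at a weight-two specialization, itself established via Heegner points. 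A commutative-algebra lemma (specialization of a divisibility that becomes an equality modulo a prime) then upgrades the three-variable divisibility to the full equality after inverting $p$. So Heegner points do enter, but only to pin down the anticyclotomic slice of the $\mathscr{L}_\pp$-conjecture; the bridge to $L_p^{\tt Hi}$ is built entirely from Beilinson--Flach classes.
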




As in \cite{SU}, the fact that $X_{\tt Gr}(\cK_\infty,A_\F)$ is $\cR[[\Gamma_\cK]]$-torsion
follows easily from Kato's work, and the proof of Theorem~A is reduced to establishing the divisibility ``$\subset$'' as ideals in $\cR[[\Gamma_\cK]]$ predicted by the main conjecture. For the proof of this divisibility, 
in \cite{SU} the authors study congruences between $p$-adic families of cuspidal automorphic forms and Eisenstein series on 
${\rm GU}(2,2)$,
and their method (in particular, their application of Vatsal's result \cite{vatsal-special}) relies crucially on their hypothesis that $N^-$ is the squarefree product of an \emph{odd} number of primes. 

In contrast, for the proof of Theorem~A we first link the above main conjecture with another instance of the Iwasawa--Greenberg main conjectures, and exploit our assumption on $N^-$ to prove the latter using Heegner points. 

As a consequence of our approach, we also obtain 
an application to Greenberg's conjecture (see \cite[\S{0}]{Nekovar-Plater}, following \cite{GreenbergCRM}) on the generic order of vanishing at the center of the $p$-adic $L$-functions attached to cusp form in Hida families. To state this, assume for simplicity that $\cR$ is just $\ro[[T]]$, and for each $k\in\bZ_{\geqslant 2}$ let $\F_k$ be the $p$-stabilized newform on $\Gamma_0(Np)$ obtained by setting $T=(1+p)^{k-2}-1$ in $\F$. One can show that the $p$-adic $L$-functions $L_p^{\tt MTT}(\F_k,s)$ of \cite{mtt} satisfy a functional equation 
\[
L_p^{\tt MTT}(\F_k,s)=-wL_p^{\tt MTT}(\F_\k,k-s)
\]
with a sign $w=\pm{1}$ independent of $k\in\mathbf{Z}_{\geqslant 2}$ with $k\equiv 2\pmod{2(p-1)}$. 

\begin{intro-conj4}[{\bf Greenberg's nonvanishing conjecture}]
Let $e\in\{0,1\}$ be such that $-w=(-1)^{e}$. Then 
\[
\frac{L_p^{\tt MTT}(\F_k,s)}{(s-k/2)^{e}}\biggr\vert_{s=k/2}\neq 0,
\]
for all but finitely many $k\in\bZ_{\geqslant 2}$ with $k\equiv 2\pmod{2(p-1)}$.
\end{intro-conj4} 

In other words, for all but finitely many $k$ as above, the order of vanishing of $L_p^{\tt MTT}(\F_k,s)$ at the center should be the least allowed by the sign in the functional equation.

To state our result in the direction of this conjecture, let 
\[
T_\F^\dagger:=T_\F\otimes\Theta^{-1}
\]
be the self-dual twist of $T_\F$. By work of Plater \cite{Plater} (and more generally Nekov{\'a}{\v{r}}  \cite{nekovar310}) 
there is a cyclotomic  $\cR$-adic height pairing 
\begin{equation}\label{eq:ht-intro}
\langle,\rangle_{\cK,\cR}^{\rm cyc}:{\rm Sel}_{\tt Gr}(\cK,T_\F^\dagger)\times
{\rm Sel}_{\tt Gr}(\cK,T_\F^\dagger)\rightarrow F_\cR
\end{equation} 
interpolating the $p$-adic height pairings for the classical specialization of $\F$ 
as constructed by Perrin-Riou \cite{PR-109}. 
It is expected that $\langle,\rangle_{\cK,\cR}^{\rm cyc}$ is non-degenerate, in the sense that its kernel on either side should reduce to $\cR$-torsion submodule of ${\rm Sel}_{\tt Gr}(\cK,T_\F^\dagger)$. 

\begin{ThmB}
In addition to {\rm (\ref{ass:irred})}, assume that:
\begin{itemize}
\item $N$ is squarefree,
\item 
$\F_2$ is old at $p$,
\item there are at least two primes $\ell\Vert N$ at which $\bar{\rho}_{\F}$ is ramified.
\end{itemize}
If ${\rm Sel}_{\tt Gr}(\bQ,T_{\F}^\dagger)$ has $\cR$-rank one and
$\langle,\rangle_{\cK,\cR}^{\rm cyc}$ is non-degenerate, then
\[
\frac{d}{ds}L_p^{\tt MTT}(\F_k,s)\biggr\vert_{s=k/2}\neq 0,
\]
for all but finitely many $k\in\bZ_{\geqslant 2}$ with $k\equiv 2\pmod{2(p-1)}$.
\end{ThmB}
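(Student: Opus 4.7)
The plan is to combine Theorem~A with a factorisation of $L_p^{\tt Hi}(\F/\cK)$ along the cyclotomic direction and an $\cR$-adic derivative formula of Perrin-Riou--Plater--Nekov\'a\v{r} type. Using the hypothesis that at least two primes $\ell\Vert N$ ramify $\bar\rho_\F$, one can choose an imaginary quadratic field $\cK$ in which $p$ splits and in which exactly two such primes are inert (with all other primes dividing $N$ split), making $N^-$ the product of an even number of primes each ramifying $\bar\rho_\F$. All hypotheses of Theorem~A are then satisfied. Restricted to the cyclotomic direction of $\Gamma_\cK$, the Hida Rankin--Selberg $p$-adic $L$-function specialises at each weight-$k$ classical point to a product $L_p^{\tt MTT}(\F_k, s)\cdot L_p^{\tt MTT}(\F_k\otimes\chi_\cK, s)$ up to non-vanishing interpolation factors, where $\chi_\cK$ is the quadratic character of $\cK/\bQ$. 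The choice of $\cK$ arranges that the functional equation of $L(\F_k\otimes\chi_\cK, s)$ has sign $+1$, so $L_p^{\tt MTT}(\F_k\otimes\chi_\cK, k/2)\neq 0$ for all but finitely many $k$ by standard non-vanishing of $p$-adic $L$-functions in Hida families.

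By Theorem~A, the characteristic ideal of $X_{\tt Gr}(\cK_\infty, A_\F)$ in $\cR[[\Gamma_\cK]]\otimes_{\bZ_p}\bQ_p$ is generated by $L_p^{\tt Hi}(\F/\cK)$. Combining the rank-one hypothesis on $\Sel_{\tt Gr}(\bQ, T_\F^\dagger)$ with the $\cR$-torsion property of the Selmer group attached to the $\chi_\cK$-twist (forced by the non-vanishing above via an Euler-system argument, or by Theorem~A itself specialised along the anticyclotomic direction), one obtains that $\Sel_{\tt Gr}(\cK, T_\F^\dagger)$ has $\cR$-rank exactly one. Consequently, the self-dual cyclotomic restriction of $X_{\tt Gr}(\cK_\infty, A_\F)$ has $\cR$-corank one, and the cyclotomic restriction of $L_p^{\tt Hi}(\F/\cK)$ vanishes at the self-dual point to order at least one.

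The decisive step is a derivative formula identifying the leading cyclotomic term of $L_p^{\tt Hi}(\F/\cK)$ at the self-dual point with the $p$-adic height regulator of $\langle,\rangle_{\cK,\cR}^{\rm cyc}$ on the rank-one quotient of $\Sel_{\tt Gr}(\cK, T_\F^\dagger)$, multiplied by a torsion factor coming from the algebraic side of Theorem~A. The non-degeneracy hypothesis on $\langle,\rangle_{\cK,\cR}^{\rm cyc}$ then makes this regulator a non-zero element of $F_\cR$. Specialising at weight $k$ and combining with the factorisation of the first paragraph, the product $\frac{d}{ds}L_p^{\tt MTT}(\F_k, s)|_{s=k/2}\cdot L_p^{\tt MTT}(\F_k\otimes\chi_\cK, k/2)$ equals, up to non-vanishing factors, the image of the $\cR$-adic regulator at $T=(1+p)^{k-2}-1$. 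Since this regulator is non-zero in $F_\cR$ and the twist factor is non-zero for almost all $k$, the desired non-vanishing of $\frac{d}{ds}L_p^{\tt MTT}(\F_k, s)|_{s=k/2}$ for all but finitely many $k\equiv 2\pmod{2(p-1)}$ follows.

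The main obstacle is the $\cR$-adic derivative formula: a precise identification of the leading cyclotomic term of $L_p^{\tt Hi}(\F/\cK)$ at the self-dual point with the height regulator on $\Sel_{\tt Gr}(\cK, T_\F^\dagger)$ modulo torsion, with control of the torsion factor so that its denominator-free representative does not vanish on a Zariski-dense set of specialisations. This requires careful analysis of the Euler factors at the primes dividing $N$ (the two-ramified-prime hypothesis governs the local behaviour of the height pairing and the regulator), a comparison of Greenberg Selmer groups over $\cK$ and $\bQ$ via the induced/twist decomposition, and the use of the ``$\F_2$ old at $p$'' hypothesis to ensure that no exceptional-zero correction appears at the self-dual point.
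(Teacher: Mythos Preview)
Your overall strategy matches the paper's: choose an auxiliary $\cK$ so that hypotheses (h0)--(h6) hold, factor the cyclotomic restriction of $L_p^{\tt Hi}(\F/\cK)$ as a product of Mazur--Tate--Teitelbaum $p$-adic $L$-functions, and feed a derivative/BSD-type formula for the linear cyclotomic term into this factorization. Two points deserve sharpening.

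\emph{The choice of $\cK$.} Your appeal to ``standard non-vanishing of $p$-adic $L$-functions in Hida families'' to force $L_p^{\tt MTT}(\F_k\otimes\chi_\cK,k/2)\neq 0$ generically is circular as written: generic central nonvanishing with sign $+1$ is precisely the rank-zero case of Greenberg's conjecture and is not automatic. The paper instead invokes Friedberg--Hoffstein to choose $\cK$ with $L(f\otimes\epsilon_\cK,1)\neq 0$ at the \emph{single} weight-two specialization $f$, then uses Kato's Euler system to make ${\rm Sel}_{\tt Gr}(\bQ,T_\F^\dagger\otimes\epsilon_\cK)$ $\cR$-torsion by control, giving ${\rm rank}_\cR\,{\rm Sel}_{\tt Gr}(\cK,T_\F^\dagger)=1$.

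\emph{The derivative formula.} What you describe as ``regulator $\times$ torsion'' is the paper's Theorem~\ref{thm:3.1.5}, proved via a Rubin-type height formula for the Beilinson--Flach class $\mathcal{BF}^{\dagger,\ac}$ together with the anticyclotomic main conjecture. The paper then goes one step further: combining this with the big Heegner point main conjecture (Theorem~\ref{thm:HP-MC}) gives the $\cR[[\Gamma_\cK^\ac]]$-adic Gross--Zagier formula $(L_{p,1}^{\tt Hi}(\F^\dagger/\cK)_{\ac})=(\langle\mathfrak{Z}_\infty,\mathfrak{Z}_\infty\rangle^{\rm cyc})$ of Corollary~\ref{cor:I-GZ}. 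Nonvanishing of the right-hand side at the trivial character then follows from the converse-to-Howard theorem (rank one $\Rightarrow$ $\mathfrak{Z}_0$ non-torsion) plus non-degeneracy of the height. Your formulation bypasses the Heegner point $\mathfrak{Z}_0$ and works directly with the regulator on the rank-one Selmer group; this is equivalent in the end, but you then have to check separately that the ``torsion factor'' $\mathcal{X}_{\rm tors}$ does not vanish under specialization at the trivial anticyclotomic character---the paper's Heegner-point packaging absorbs this into the single quantity $\langle\mathfrak{Z}_0,\mathfrak{Z}_0\rangle$.

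Finally, the hypothesis that $\F_2$ is old at $p$ is the paper's (h1); its role is not to rule out exceptional zeros but to supply a specific weight-two newform at which one can invoke Friedberg--Hoffstein, Kato, and the known cases of the anticyclotomic main conjecture needed to bootstrap the equalities in Theorems~\ref{thm:HP-MC} and~\ref{thm:3-IMC-BDP} via Lemma~\ref{lem:3.2}.
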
	
	


\begin{rem}
The counterpart to Theorem~B in rank zero, i.e., the implication
\begin{equation}\label{eq:rank0}
{\rm rank}_{\cR}\;{\rm Sel}_{\tt Gr}(\bQ,T_{\F}^\dagger)=0\quad\Longrightarrow \quad L_p^{\tt MTT}(\F_k,k/2)\neq 0,
\end{equation}
for all but finitely many $k$ as above, 
follows easily from 
\cite{SU} (see Theorem~\ref{thm:Gr+1}). 
\end{rem}

\begin{rem}\label{rem:0or1}
By the control theorem for ${\rm Sel}_{\tt Gr}(\bQ,T_{\F}^\dagger)$ (see e.g. \cite[Prop.~12.7.13.4(i)]{nekovar310}) 
and the $p$-parity conjecture  for classical Selmer groups (see e.g. \cite[Thm.~6.4]{cas-hsieh1}), the hypothesis that ${\rm Sel}_{\tt Gr}(\bQ,T_{\F}^\dagger)$ has $\cR$-rank one (resp. zero) implies that $w=1$ (resp. $w=-1$). 

Conversely, it is expected that the $\cR$-rank of ${\rm Sel}_{\tt Gr}(\bQ,T_{\F}^\dagger)$ is \emph{always} $0$ or $1$; more precisely,
\[
{\rm rank}_{\cR}\;{\rm Sel}_{\tt Gr}(\bQ,T_{\F}^\dagger)\overset{?}=\left\{
\begin{array}{ll}
1&\textrm{if $w=1$,}\\[0.1cm]
0&\textrm{if $w=-1$.}
\end{array}
\right. 
\]
For example, by \cite[Cor.~3.4.3 and Eq.~(21)]{howard-invmath} this prediction is a consequence of Howard's ``horizontal nonvanishing conjecture''.
\end{rem}

\begin{rem}\label{rem:CM-case}
For Hida families $\F$ with CM (a case that is excluded by our hypotheses), the analogue of Theorem~B is due to Agboola--Howard and Rubin \cite[Thm.~B]{AHsplit}. 
In this case, the rank one and non-degeneracy assumptions 
follow from Greenberg's nonvanishing results \cite{greenberg-BSD} (see \cite[Prop.~2.4.4]{AHsplit}) 
and a transcendence result of Bertrand \cite{bertrand-AH} (see \cite[Thm.~A.1]{AHsplit}). In rank zero, the CM case of $(\ref{eq:rank0})$ follows from \cite{greenberg-BSD} and Rubin's proof of the Iwasawa main conjecture for imaginary quadratic fields \cite{rubin-IMC}.
\end{rem}

We conclude this Introduction with some of the ingredients that go into the proofs of the above results. 

The proof of Theorem~A builds on the link that we establish in $\S\ref{sec:Iw}$ between different instances of the Iwasawa--Greenberg main conjectures involving Selmer groups differing from $(\ref{eq:2Gr-intro})$ in their local conditions at the places above $p$. In particular, letting $\pp$ be the prime of $\cK$ above $p$ determined by a fixed embedding $\overline{\bQ}\hookrightarrow\overline{\bQ}_p$,  and denoting by $\hat\bZ_p^{\rm ur}$ the completion of the ring of integers of the maximal unramified extension of $\bQ_p$, a central role is played by the Selmer group 
defined by
\[
{\rm Sel}_{\emptyset,0}(\cK_\infty,A_\F):=\ker\biggl\{\rH^1(\cK_\infty,A_\F)\rightarrow\prod_{w\nmid p}\rH^1(I_w,A_\F)\times
\prod_{w\mid\overline{\pp}}\rH^1(\cK_{\infty,w},A_\F)\biggr\}
\]
whose Pontryagin dual is conjecturally generated by a $p$-adic $L$-function
\[
\mathscr{L}_\pp(\F/\cK)\in\cR^{\rm ur}[[\Gamma_\cK]],\quad\quad\textrm{where}\;\;\cR^{\rm ur}:=\cR\hat{\otimes}_{\bZ_p}\hat{\bZ}_p^{\rm ur},
\] 
interpolating critical values $L(\F_\phi/\cK,\chi,j)$
with $\chi$ running over characters of $\Gamma_\cK$ with associated theta series of weight higher than the weight of $\F_\phi$. 

This second instance of the Iwasawa--Greenberg main conjecture can be related on the one hand to the main conjecture for $L_p^{\tt Hi}(\F/\cK)$ by building on the explicit reciprocity laws for the Rankin--Eisenstein classes of Kings--Loeffler--Zerbes \cite{KLZ2}, and on the other hand (after restriction to the anticyclotomic line) to the big Heegner point main conjecture of Howard \cite[Conj.~3.3.1]{howard-invmath}
using the explicit reciprocity law for Heegner points of \cite{cas-hsieh1}\footnote{Itself a generalization of the celebrated $p$-adic Waldspurger formula of Bertolini--Darmon--Prasanna \cite{bdp1}.}, thereby allowing us to take the results of \cite{wanIMC} and \cite{Fouquet} towards the proof of those conjecture to bring to bear on the main conjecture for $L_p^{\tt Hi}(\F/\cK)$. 

On the other hand, a key ingredient in the proof of Theorem~B is the Birch and Swinnerton-Dyer type formula for  
$L_p^{\tt Hi}(\F/\cK)$ along $\cR[[\Gamma_\cK^\ac]]$ that we obtain in Theorem~\ref{thm:3.1.5} by building on the ealier results of the paper, leading to a $p$-adic Gross--Zagier formula for Howard's system of big Heegner points $\mathfrak{Z}_\infty$ that we then apply for a suitably chosen imaginary quadratic field $\cK$.

\sk 

\noindent{\bf Acknowledgements.} 
It is a pleasure to thank Chris Skinner for several helpful conversations. 
Subtantial progress on this paper occurred during visits of the first author to Fudan University in January 2019, the Morningside Center of Mathematics in June 2019, and Academia Sinica in December 2019, and he would like to thank these institutions 
for their hospitality.

\section{$p$-adic $L$-functions}\label{sec:padicL}

\subsection{Hida families}\label{subsec:hida}

Let $\cR$ be a local reduced normal extension of $\ro[[T]]$, where $\ro$ is the ring of integers of a finite extension $L$ of $\bQ_p$, and denote by $\mathcal{X}_a(\cR)\subset{\rm Hom}_{\rm cts}(\cR,\overline{\bQ}_p)$ the set of continuous $\ro$-algebra homomorphisms
$\phi:\cR\rightarrow\overline{\bQ}_p$ satisfying 
\[
\phi(1+T)=\zeta(1+p)^{k-2}
\]
for some $p$-power root of unity $\zeta=\zeta_\phi$ and some integer $k=k_\phi\in\bZ_{\geqslant 2}$ called the \emph{weight} of $\phi$. We shall refer to the elements of $\mathcal{X}_a(\cR)$ as \emph{arithmetic} primes of $\cR$, and let $\mathcal{X}_a^o(\cR)$ denote the set consisting of arithmetic primes $\phi$ with $\zeta_\phi=1$ and weight $k_\phi\equiv 2\pmod{p-1}$.

Let $N$ be a positive integer prime to $p$, let $\chi$ be an even Dirichlet character modulo $Np$ taking values in $L$, and let $\F=\sum_{n=1}^\infty\boldsymbol{a}_nq^n\in\cR[[q]]$ be an ordinary $\cR$-adic cusp eigenform of tame level $N$ and character $\chi$, as defined in \cite[\S{3.3.9}]{SU}. In particular, for every $\phi\in\mathcal{X}_a(\cR)$ we have
\[
\F_\phi:=\sum_{n=1}^\infty\phi(\boldsymbol{a}_n)q^n\in S_{k}(\Gamma_0(p^{t}N),\chi\omega^{2-k_\phi}\psi_{\zeta}),
\]
where 
\begin{itemize}
\item $t=t_\phi\geqslant 1$ is such that $\zeta$ is a primitive $p^{t-1}$-st root of unity,  
\item $\omega$ is the Teichm\"uller character, and
\item
$\psi_{\zeta}:(\bZ/p^{t}N\bZ)^\times\twoheadrightarrow(\bZ/p^{t}\bZ)^\times\rightarrow\overline{\bQ}_p^\times$ is determined by $\psi_{\zeta}(1+p)=\zeta=\zeta_\phi$. 
\end{itemize}
Denote by $S^{\rm ord}(N,\chi;\cR)$ the space of such $\cR$-adic eigenforms $\F$. If in addition $\F_\phi$ is $N$-new for all $\phi\in\mathcal{X}_a(\cR)$, we say that $\F$ is a \emph{Hida family} of tame level $N$ and character $\chi$. 

We refer to $\F_\phi$ as the specialization of $\F$ at $\phi$. More generally, if $\phi\in{\rm Hom}_{\rm cts}(\cR,\overline{\bQ}_p)$ is such that $\F_\phi$ is a classical eigenform, we say that $\F_\phi$ is a classical specialization of $\F$; this includes the specializations of $\F\in S^{\rm ord}(N,\chi;\cR)$ at $\phi\in\mathcal{X}_a(\cR)$, but possibly also specializations in weight $1$, for example.


\subsection{Congruence modules}\label{subsec:congr}

We recall the notion of congruence modules following the treatment of \cite[$\S{12.2}$]{SU} and \cite[\S{3.3}]{hsieh-triple}. Let $\F$ be a Hida family of tame level $N$ and character $\chi$ defined over $\cR$, and let $\rho_\F:G_\bQ\rightarrow{\rm GL}_2(F_\cR)$ be the Galois representation associated to $\F$, where $F_\cR$ is the fraction field of $\cR$. Let $\mathbb{T}(N,\chi,\cR)$ be the Hecke algebra acting $S^{\rm ord}(N,\chi;\cR)$, and let $\lambda_{\F}:\mathbb{T}(N,\chi,\cR)\rightarrow\cR$ be the algebra homomorphism defined by $\cR$, which factors through the local component $\mathbb{T}_{\mathfrak{m}_{\F}}$.

Since $\F$ is $N$-new, there is an algebra direct sum decomposition
\[
\lambda:\mathbb{T}_{\mathfrak{m}_{\F}}\otimes_\cR F_\cR\simeq\mathbb{T}'\times F_\cR
\]
with the projection onto the second factor given by $\lambda_{\F}$. The \emph{congruence module} $C(\F)\subset\cR$ is defined by
\[
C(\F):=\lambda_{\F}\left(\mathbb{T}_{\mathfrak{m}_{\F}}\cap\lambda^{-1}(\{0\}\times F_\cR)\right).
\]

Following the convention in  \cite[\S{7.7}]{KLZ2}, we shall also consider the \emph{congruence ideal} $I_\F$, defined as the fractional ideal $I_\F:=C(\F)^{-1}\subset F_\cR$. As noted in \emph{loc.cit.}, if follows from \cite[Thm.~4.2]{hidaII} that elements of $I_\F$ define meromorphic functions on ${\rm Spec}(\cR)$ which are regular at all arithmetic points. 


\subsection{Rankin--Selberg $p$-adic $L$-functions}\label{sec:2varL}
 
The next result on the construction of $3$-variable $p$-adic Rankin $L$-series is due to Hida.  

Let $\Gamma$ 
be Galois group of the cyclotomic $\bZ_p^\times$-extension of $\bQ$, and set 
\[
\Lambda_\Gamma=\bZ_p[[\Gamma]].
\] 
If $j\in\bZ$ and $\chi$ is a Dirichlet character of $p$-power conductor, there is a unique $\phi\in{\rm Hom}_{\rm cts}(\Lambda_\Gamma,\overline{\bQ}_p^\times)$ extending the character $z\mapsto z^j\chi(z)$ on $\bZ_p^\times$.

\begin{thm}\label{thm:hida}
Let $\F_1, \F_2$ be Hida families of tame levels $N_1, N_2$, respectively, and let $N={\rm lcm}(N_1,N_2)$. Then there is an element
\[
L_p(\F_1,\F_2)\in\left(I_{\F_1}\hat{\otimes}_{\bZ_p}\cR_{\F_2}\hat{\otimes}_{\bZ_p}\Lambda_{\Gamma}\right)\otimes_{\bZ}\bZ[\mu_N]
\]
uniquely characterized by the following interpolation property. Let $f_1$, $f_2$ be classical specializations of $\F_1$, $\F_2$ of weights $k_1$, $k_2$, respectively, with $k_1>k_2\geqslant 1$, let $j$ be an integer in the range $k_2\leqslant j\leqslant k_1-1$, and let $\chi$ be a Dirichlet character of $p$-power conductor.
Suppose the automorphic representation $\pi_{f_1}$ is a principal series representation $\pi(\eta_1,\eta_1')$ with $\eta_1$ unramified and $\eta_1(p)$ a $p$-adic unit.
Then the value of $L_p(\F_1,\F_2)$ at the corresponding specialization $\phi\in{\rm Spec}(\cR_{\F_1}\hat\otimes_{\bZ_p}\cR_{\F_2}\hat\otimes_{\bZ_p}\Lambda_{\Gamma})$ is given by
\begin{align*}
\phi(L_p(\F_1,\F_2))&=\frac{\mathcal{E}(f_1,f_2,\chi,j)}{\mathcal{E}(f_1)\mathcal{E}^*(f_1)}\cdot\frac{\Gamma(j)\Gamma(j-k_2+1)}{\pi^{2j+1-k_2}(-i)^{k_1-k_2}2^{2j+k_1-k_2}\left\langle f_1,f_1^c\vert_{k_1}\bigl(\begin{smallmatrix}&-1\\p^{t_1}N_1\end{smallmatrix}\bigr)\right\rangle_{N_1}}\\
&\quad\times L(
f_1,f_2,\chi^{-1},j),
\end{align*}
where if $\alpha_{i}$ and $\beta_{i}$ are the roots of the Hecke polynomial of $f_i$ at $p$, with $\alpha_{i}$ being the $p$-adic unit root, and $p^t$ is the conductor of $\chi$, the Euler factors are given by
\[
\mathcal{E}(f_1,f_2,\chi,j)=\left\{
\begin{array}{ll}
\left(1-\frac{p^{j-1}}{\alpha_{1}\alpha_{2}}\right)\left(1-\frac{p^{j-1}}{\alpha_{1}\beta_{2}}\right)
\left(1-\frac{\beta_{1}\alpha_{2}}{p^j})(1-\frac{\beta_{1}\beta_{2}}{p^j}\right)&\textrm{if $t=0$,}\\[0.2cm]
G(\chi)^2\cdot\left(\frac{p^{2j-2}}{\alpha^2_1\alpha_2\beta_2}\right)^t &\textrm{if $t\geqslant 1$,}
\end{array}
\right.
\]
where $G(\chi)$ is the Gauss sum of $\chi$, and if $p^{t_1}$ is the $p$-part of the conductor of $\eta_1'$, then
\[
\mathcal{E}(f_1)\mathcal{E}^*(f_1)=\left\{
\begin{array}{ll}
\left(1-\frac{\beta_{1}}{p\alpha_{1}}\right)\left(1-\frac{\beta_{1}}
{\alpha_{1}}\right)&\textrm{if $t_1=0$,}\\[0.2cm]
G(\chi_1)\cdot\eta_1'\eta_1^{-1}(p^{t_1})p^{-t_1}&\textrm{if $t_1\geqslant 1$,}\\
\end{array}
\right.
\]
where $\chi_1$ is the nebentypus of $f_1$.
\end{thm}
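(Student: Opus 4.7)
The plan is to $p$-adically interpolate the Rankin--Selberg integral representation of the critical values $L(f_1, f_2, \chi^{-1}, j)$. Shimura's theory expresses each such value (up to an explicit archimedean factor) as a Petersson pairing
\[
\bigl\langle f_1^c,\; \mathrm{hol}\bigl(f_2 \cdot E_\chi(j)\bigr)\bigr\rangle_{N},
\]
where $E_\chi(j)$ is a suitable nearly holomorphic Eisenstein series and $\mathrm{hol}$ denotes holomorphic projection. The strategy is to interpolate the right-hand side $p$-adically; the division by $\bigl\langle f_1, f_1^c|_{k_1} w_{p^{t_1}N_1}\bigr\rangle_{N_1}$ in the theorem will arise from normalizing the $\cR_{\F_1}$-adic analogue of this Petersson pairing.

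First I would construct a $p$-adic family of Eisenstein series $\boldsymbol{E}$, defined over $\cR_{\F_2} \hat\otimes_{\bZ_p} \Lambda_\Gamma \hat\otimes \bZ[\mu_N]$, specializing to $E_\chi(j)$ across the allowed range $k_2 \leqslant j \leqslant k_1 - 1$ and all finite-order characters $\chi$ of $p$-power conductor; this uses Katz-type Eisenstein measures together with the $q$-expansion principle. Multiplying $\boldsymbol{E}$ by $\F_2$ and applying Hida's ordinary projector $e^{\mathrm{ord}}$ produces an ordinary $\cR_{\F_2} \hat\otimes_{\bZ_p} \Lambda_\Gamma$-adic cusp form, with $e^{\mathrm{ord}}$ playing the $p$-adic role of $\mathrm{hol}$.

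Next I would pair this form against $\F_1$ using the linear functional defined by the $\F_1$-isotypic projection coming from the direct sum decomposition $\lambda: \mathbb{T}_{\frakm_{\F_1}} \otimes_{\cR_{\F_1}} F_{\cR_{\F_1}} \simeq \mathbb{T}' \times F_{\cR_{\F_1}}$ recalled in the preceding subsection. The output lies in $(I_{\F_1} \hat\otimes_{\bZ_p} \cR_{\F_2} \hat\otimes_{\bZ_p} \Lambda_\Gamma) \otimes_{\bZ} \bZ[\mu_N]$, the congruence ideal $I_{\F_1}$ exactly absorbing the classical Petersson normalization. Uniqueness of the resulting $L_p(\F_1,\F_2)$ follows from the Zariski density of the arithmetic specializations in $\Spec\bigl(\cR_{\F_1} \hat\otimes_{\bZ_p} \cR_{\F_2} \hat\otimes_{\bZ_p} \Lambda_\Gamma\bigr)$.

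Finally I would verify the precise interpolation formula at each arithmetic specialization. Two discrepancies between the $p$-adic pairing and Shimura's classical integral must be accounted for: $(i)$ $\lambda_{\F_1}$ specializes to the ordinary $p$-stabilization $f_1^{\mathrm{st}}$ rather than the newform $f_1$, and $(ii)$ the Eisenstein family $\boldsymbol{E}$ specializes to a form of level $p^t N$ rather than of level $N$. The first produces $\cE(f_1)\cE^*(f_1)$ via the standard comparison of Petersson norms of newforms and their $p$-stabilizations. The second produces $\cE(f_1,f_2,\chi,j)$ as the $p$-part of the local Rankin--Selberg zeta integral. I expect the main obstacle to be the bookkeeping for $(ii)$, in particular the transition between the cases $t = 0$ and $t \geqslant 1$, where one must simultaneously match the $U_p$-action, the level-lowering traces, and the Gauss-sum twists against the explicit local zeta integral computation.
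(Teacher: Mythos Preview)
Your outline is essentially correct and follows Hida's original Rankin--Selberg construction, but you should be aware that the paper itself does not give a self-contained proof of this theorem: its entire proof reads ``This follows from \cite[Thm.~5.1]{hidaII}, which we have stated adopting the formulation in \cite[Thm.~7.7.2]{KLZ2} (slightly extended to include some more general specializations of the dominant Hida family $\F_1$).'' So there is nothing substantive to compare against; the paper treats the result as known input from the literature.

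What you have sketched is precisely the strategy carried out in the cited references: interpolate the Rankin--Selberg integral by building a $\Lambda$-adic Eisenstein family, multiply by $\F_2$, apply $e^{\rm ord}$ as a $p$-adic substitute for holomorphic projection, and then project onto the $\F_1$-isotypic component via the idempotent coming from the congruence module (which is exactly why the answer lands in $I_{\F_1}\hat\otimes\cdots$). Your identification of the two correction factors---$\mathcal{E}(f_1)\mathcal{E}^*(f_1)$ from comparing Petersson norms of the newform and its $p$-stabilization, and $\mathcal{E}(f_1,f_2,\chi,j)$ from the local zeta integral at $p$---is accurate, and the bookkeeping you flag in step~(ii) is indeed where most of the work in \cite{hidaII} and \cite{KLZ2} lies. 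If you wanted to match the paper, you could simply cite those sources; if you want a genuine proof, your plan is the right one.
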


\begin{proof}
This follows from \cite[Thm.~5.1]{hidaII}, which we have stated adopting the formulation in \cite[Thm.~7.7.2]{KLZ2} (slightly extended to include some more general specializations of the dominant Hida family $\F_1$).
\end{proof}



In this paper, we shall consider the $p$-adic $L$-functions  $L_p(\F_1,\F_2)$ of Theorem~\ref{thm:hida} in the cases where either $\F_1$ or $\F_2$ has CM. 

Thus let $\F$ be a fixed Hida family of tame level $N$ defined over $\cR$, and let $\cK$ be an imaginary quadratic field of discriminant $-D_{\cK}<0$ prime to $pN$ such that
\begin{equation}\label{eq:spl}
\textrm{$p=\frakp\overline{\pp}$ splits in $\cK$,}\nonumber
\end{equation}
with $\pp$ denoting the prime of $\cK$ above $p$ induced by our fixed embedding   
$\imath_p:\overline\bQ\hookrightarrow\bC_p$. 

Let $\cK_\infty$ be the $\bZ_p^2$-extension of $\cK$, and denote by $\Gamma_\pp\simeq\bZ_p$ the Galois group over $\cK$ of the maximal subfield of $\cK_\infty$ unramified outside $\overline{\pp}$. Let 
\begin{equation}\label{eq:g-CM}
\G=\sum_{n=1}^\infty\boldsymbol{b}_nq^n
\in\cR_{\G}[[q]]
\end{equation}
be the canonical Hida family of CM forms constructed in \cite[\S{5.2}]{JSW}, where $\cR_{\G}=\bZ_p[[\Gamma_\pp]]$. Specifically, denoting by $\theta_\pp:\mathbb{A}_{\cK}^\times\rightarrow\Gamma_\pp$ the composition of the global reciprocity map ${\rm rec}_\cK:\mathbb{A}_\cK^\times\rightarrow G_\cK^{\rm ab}$ with the natural projection $G_\cK^{\rm ab}\twoheadrightarrow\Gamma_\pp$, we have  
\[
\boldsymbol{b}_n=\sum_{\substack{N(\mathfrak{a})=n, (\mathfrak{a},\overline{\pp})=1}}\theta_\pp(x_{\mathfrak{a}}), 
\]
summing over integral ideals $\mathfrak{a}\subset\cO_\cK$, and $x_\mathfrak{a}\in\mathbb{A}_{\cK}^{\infty,\times}$ is any finite id\`ele of $\cK$ with ${\rm ord}_w(x_{\mathfrak{a},w})={\rm ord}_{w}(\mathfrak{a})$ for all finite places $w$ of $\cK$.

\subsection{Non-dominant CM: $\F_2=\G$}\label{subsec:L-2}\label{sec:dom-CM}

Assume that the residual representation $\bar{\rho}_{\F}$ is irreducible and $p$-distinguished. Then by \cite[Cor.~2, p,~482]{Fermat-Wiles} the local ring $\mathbb{T}_{\mathfrak{m}_{\F}}$ introduced in $\S\ref{subsec:congr}$ is Gorenstein, and by Hida's results (see e.g. 
\cite{hida-AJM88}) the congruence module $C(\F)$ is principal. Let $c_\F\in C(\F)$ be a generator, and set
\[
L_p^{\tt Hi}(\F/\cK):=c_{\F}\cdot L_p(\F,\G),
\]
viewed as an element in $\cR[[\Gamma_\cK]]$ (well-defined up to a unit in $\cR^\times$), where $\Gamma_\cK={\rm Gal}(\cK_\infty/\cK)$. 

The action of complex conjugation yields a decomposition
\[
\Gamma_\cK\simeq
\Gamma_\cK^{\rm ac}\times\Gamma_\cK^{\rm cyc},
\]
where 
$\Gamma_\cK^{\rm ac}$ (resp. $\Gamma_\cK^{\rm cyc}$) denotes the Galois group of the anticyclotomic (resp. cyclotomic) $\bZ_p$-extension of $\cK$. We next study the projections of $L_p^{\tt Hi}(\F/\cK)$ to $\cR[[\Gamma_\cK^\ac]]$ and $\cR[[\Gamma_\cK^{\rm cyc}]]$. 

\subsubsection{Anticyclotomic restriction of $L_p^{\tt Hi}(\F/\cK)$}\label{sec:anti-L}

Assume that $\F$ has trivial tame character, and following  \cite[Def.~2.1.3]{howard-invmath} define the \emph{critical character} $\Theta:G_\bQ\rightarrow\cR^\times$ by
\begin{equation}\label{def:crit-char}
\Theta:=[\langle\varepsilon_{\rm cyc}\rangle^{1/2}],
\end{equation}
where $\varepsilon_{\rm cyc}:G_\bQ\rightarrow\bZ_p^\times$ is the cyclotomic character, $\langle\cdot\rangle:\bZ_p^\times\rightarrow 1+p\bZ_p$ is the natural projection, and 
\[
[\cdot]:1+p\bZ_p\hookrightarrow\bZ_p[[1+p\bZ_p]]^\times\simeq\bZ_p[[T]]^\times\rightarrow\cR^\times
\]
is the composition of the obvious maps. This induces the twist map
\begin{equation}\label{eq:tw-theta}
{\rm tw}_{\Theta^{-1}}:\cR[[\Gamma_\cK]]\rightarrow\cR[[\Gamma_\cK]]
\end{equation}
defined by $\gamma\mapsto\Theta^{-1}(\gamma)\gamma$ for $\gamma\in\Gamma_\cK$.

Write $N$ as the product
\[
N=N^+ N^-
\]
with $N^+$ (resp. $N^-$) divisible only by primes which are split (resp. inert) in $\cK$, and consider the following generalized \emph{Heegner hypothesis}: 
\begin{equation}\label{eq:gen-Heeg-f}
\textrm{$N^-$ is the squarefree product of an even number of primes.}\tag{gen-H}
\end{equation}
Whenever we assume that $\cK$ satisfies the hypothesis (\ref{ass:gen-H}), we fix an integral ideal $\mathfrak{N}^+\subset\cO_\cK$ with $\cO_\cK/\mathfrak{N}^+\simeq\bZ/N^+\bZ$.


\begin{prop}\label{thm:hida-1}
Let $L_p^{\tt Hi}(\F^\dagger/\cK)_{\ac}$ be the image of ${\rm tw}_{\Theta^{-1}}(L_p^{\tt Hi}(\F/\cK))$ under the natural projection $\cR[[\Gamma_\cK]]\rightarrow\cR[[\Gamma_\cK^{\ac}]]$. If $\cK$ satisfies the hypothesis {\rm (\ref{eq:gen-Heeg-f})}, 
then $L_p^{\tt Hi}(\F^\dagger/\cK)_{\ac}$ is identically zero.
\end{prop}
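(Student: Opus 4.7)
The plan is to verify the vanishing at a Zariski-dense collection of specializations by invoking the sign of the functional equation, and then conclude by density.

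First, I would use Theorem~\ref{thm:hida} applied to the pair $(\F_1,\F_2)=(\F,\G)$ to spell out the interpolation formula for $L_p^{\tt Hi}(\F/\cK)=c_\F\cdot L_p(\F,\G)$. The natural projections $\Gamma_\pp\twoheadrightarrow\Gamma_\cK^{\ac}$ and $\Gamma^\bQ_{\cyc}\hookrightarrow\Gamma_\cK^{\cyc}$ identify every finite-order character of $\Gamma_\cK^\ac$ with a pair $(\psi,\chi_{\rm cyc})$ consisting of a finite-order character of $\Gamma_\pp$ (making $\G_\psi$ a weight-one theta series) and a Dirichlet character of $p$-power conductor. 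Combined with the twist ${\rm tw}_{\Theta^{-1}}$, which recenters things at $s=k_\phi/2$, this presents the value of $L_p^{\tt Hi}(\F^\dagger/\cK)_{\ac}$ at a generic such specialization $(\phi,\chi)\in\mathcal{X}_a(\cR)\times\Hom_{\rm cts}(\Gamma_\cK^\ac,\mu_{p^\infty})$ as
\[
(\phi,\chi)\bigl(L_p^{\tt Hi}(\F^\dagger/\cK)_{\ac}\bigr)\;=\;\star\cdot L\bigl(\F_\phi/\cK\otimes\chi_\Theta,k_\phi/2\bigr),
\]
where $\chi_\Theta$ is the anticyclotomic Hecke character of $\cK$ of infinity type $(k_\phi/2-1,1-k_\phi/2)$ determined by $\chi$ and $\Theta^{-1}$, and $\star$ is a product of periods and Euler factors which is generically nonzero.

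Next, I would compute the global root number $\varepsilon(\F_\phi/\cK\otimes\chi_\Theta)$ of the self-dual Rankin--Selberg $L$-function at the central point. Writing it as a product of local epsilon factors and applying the standard Jacquet--Langlands/Tunnell formulas:
\begin{itemize}
\item $\varepsilon_\infty=-1$, from the Shimura--Waldspurger infinity type calculation;
\item $\varepsilon_\ell=+1$ at every split prime $\ell\mid N^+$, and at $p=\pp\overline{\pp}$ (using $p$-ordinariness of $\F$ and the fact that $\chi_\Theta$ becomes unramified at $\pp$ and $\overline{\pp}$ after projection to the $\ac$-line);
\item $\varepsilon_\ell=-1$ at every inert prime $\ell\mid N^-$, using that $\F_{\phi,\ell}$ is a local newform of conductor exactly $\ell$ (by squarefreeness of $N$ and the Mazur--Wiles level-structure theory for Hida families) and that $\chi_\Theta$ is unramified at the unique prime of $\cK$ above $\ell$ (anticyclotomic towers are unramified outside $p$).
\end{itemize}
Under the hypothesis {\rm (\ref{eq:gen-Heeg-f})}, the number of inert primes dividing $N^-$ is even, so the global sign equals $(-1)\cdot(-1)^{\#\{\ell\mid N^-\}}=-1$, forcing $L(\F_\phi/\cK\otimes\chi_\Theta,k_\phi/2)=0$ by the functional equation.

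Finally, since the pairs $(\phi,\chi)$ considered form a Zariski-dense subset of $\Spec(\cR[[\Gamma_\cK^\ac]])$, the vanishing of each specialization implies $L_p^{\tt Hi}(\F^\dagger/\cK)_{\ac}\equiv 0$. The main obstacle I anticipate is not the root-number step (which is standard) but rather the bookkeeping in Step~1: matching the three-variable Hida interpolation (over $\cR\hat\otimes\cR_\G\hat\otimes\Lambda_\Gamma$) with the two-variable geometry of $\cR[[\Gamma_\cK]]$ and confirming that the anticyclotomic projection, after twisting by $\Theta^{-1}$, genuinely lands on central critical values rather than on a pole of the Euler factor $\mathcal{E}(f_1,f_2,\chi,j)$. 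Once this is unwound carefully, the vanishing of root number does the rest.
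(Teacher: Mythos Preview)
Your approach is essentially the same as the paper's: specialize at a Zariski-dense set of arithmetic points, identify the values as central $L$-values of self-dual Rankin--Selberg $L$-functions over $\cK$, invoke the sign $-1$ in the functional equation under (\ref{eq:gen-Heeg-f}) to force vanishing, and conclude by density. The paper simply cites the root-number computation (referring to \cite[\S{1}]{CV-dur}) and keeps the twisting character $\psi'$ finite order with the central point at $s=k/2$, whereas you spell out the local $\varepsilon$-factors and absorb the half-integral Tate twist into the infinity type of $\chi_\Theta$; these are equivalent normalizations, and your anticipated bookkeeping obstacle (matching variables and avoiding Euler-factor poles) is harmless here since you only need the product to vanish, not the $L$-value to be detected.
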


\begin{proof}
Let $\phi\in{\rm Spec}(\cR_{\F}\hat\otimes_{\bZ_p}\cR_{\G}\hat\otimes_{\bZ_p}\Lambda_\Gamma)={\rm Spec}(\cR[[\Gamma_\cK]])$ be a specialization in the range specified in Theorem~\ref{thm:hida}, with $f_1=\F_\phi$ the $p$-stabilization of a newform $f\in S_k(\Gamma_0(N))$ of weight $k\geqslant 2$ and $f_2=\G_\phi$ a classical weight $1$ specialization. 

By the interpolation property, the value $\phi(L_p^{\tt Hi}(\F/\cK))$ is a multiple of
\[
L(f_1,f_2,\chi^{-1},j)=L(f/\cK,\psi,j),
\]
with $\psi$ a finite order character of $\Gamma_\cK$ and $1\leqslant j\leqslant k-1$, and so $\phi({\rm tw}_{\Theta^{-1}}(L_p^{\tt Hi}(\F/\cK)))$ is also a multiple $L(f/\cK,\psi',k/2)$ for a finite order character $\psi'$ of $\Gamma_\cK$. If $\psi'$ factors through the projection $\Gamma_\cK\twoheadrightarrow\Gamma_{\cK}^{\ac}$, then the $L$-function $L(f/\cK,\psi',s)$ is self-dual, with
a functional equation relating its values at $s$ and $k-s$, and if $\cK$ satisfies the hypothesis (\ref{eq:gen-Heeg-f}), then the sign in this
functional equation is $-1$ (see e.g. \cite[\S{1}]{CV-dur}). Thus $L(f/\cK,\psi',k/2)=0$, and letting $\phi$ vary, the result follows.
\end{proof}

\subsubsection{Cyclotomic restriction of $L_p^{\tt Hi}(\F/\cK)$}\label{sec:anti-L}

As above, we denote by $\Gamma_\cK^{\rm cyc}$ the Galois group of the cyclotomic $\bZ_p$-extension of $\cK$, which we shall often identify with the maximal torsion-free quotient of $\Gamma$. 

For any ordinary $p$-stabilized newform $f$ of tame level $N$ defined over $L$ (a finite extension of $\bQ_p$), let $L_p^{\tt MTT}(f)\in\ro[[\Gamma]]$ be the cyclotomic $p$-adic $L$-function attached to $f$ in \cite{mtt}, where $\ro$ is the ring of integers of $L$ (see \cite[\S{3.4.4}]{SU} and the references therein). We refer the reader to \emph{loc.cit.} for the precise interpolation property satisfied by $L_p^{\tt MTT}(f)$, only noting here that the complex periods used for the construction are Shimura's periods $\Omega_f^\pm\in\bC^\times/\ro^\times$ (as reviewed in \cite[\S{3.3.3}]{SU}). 


\begin{thm}\label{thm:cyc-res}
Let $L_p^{\tt Hi}(\F/\cK)_{\rm cyc}$ be the image of $L_p^{\tt Hi}(\F/\cK)$ under the natural projection $\cR[[\Gamma_\cK]]\rightarrow\cR[[\Gamma_\cK^{\rm cyc}]]$. Then for every $\phi\in\mathcal{X}_a^o(\cR)$ 
we have
\[
\phi(L_p^{\tt Hi}(\F/\cK)_{\rm cyc})=L_p^{\tt MTT}(\F_\phi)\cdot L_p^{\tt MTT}(\F_\phi\otimes\epsilon_\cK)
\]
up to a unit in $\phi(\cR)[[\Gamma]]^\times$, where $\epsilon_\cK$ is the quadratic character associated to $\cK$.
\end{thm}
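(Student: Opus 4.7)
The plan is to fix $\phi\in\mathcal{X}_a^o(\cR)$ and verify the stated identity in $\phi(\cR)[[\Gamma]]\otimes_{\bZ_p}\bQ_p$ by checking at a Zariski-dense subset of specializations $(\nu,j)$, with $\nu$ a finite-order character of $\Gamma$ of $p$-power conductor and $j$ an integer in the critical range $[1,k_\phi-1]$; by Weierstrass preparation it suffices to show agreement up to $p$-adic units at such points. Writing $k=k_\phi$, the first step is to unwind the projection $\cR_\G\hat\otimes_{\bZ_p}\Lambda_\Gamma\twoheadrightarrow\cR[[\Gamma_\cK]]\twoheadrightarrow\cR[[\Gamma_\cK^{\rm cyc}]]$: since $\G$ is parametrized by characters of $\Gamma_\pp$ and we are projecting onto the cyclotomic line, the Hecke character of $\cK$ appearing in the Rankin convolution at such a specialization factors through the norm as $\eta\circ N_{\cK/\bQ}$ for a Dirichlet character $\eta$ of $p$-power conductor determined by $\nu$.

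Applying Theorem~\ref{thm:hida} to $(\F_1,\F_2)=(\F,\G)$ and multiplying by $c_\F$ therefore expresses $\phi\otimes(\nu,j)(L_p^{\tt Hi}(\F/\cK)_{\rm cyc})$ as $c_\F$ times an explicit ratio of gamma factors, Euler factors, and the complex $L$-value $L(\F_\phi,\G_\psi,\eta^{-1},j)$, where $\G_\psi$ is the relevant CM specialization. The classical Artin formalism then gives
\[
L(\F_\phi,\G_\psi,\eta^{-1},j) \;=\; L(\F_\phi/\cK,\eta^{-1}\circ N_{\cK/\bQ},j) \;=\; L(\F_\phi,\eta^{-1},j)\cdot L(\F_\phi\otimes\epsilon_\cK,\eta^{-1},j),
\]
which is the source of the factorization stated in the theorem. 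Since $p=\pp\overline{\pp}$ splits in $\cK$ and the Hecke eigenvalues of $\G_\psi$ at $\pp,\overline{\pp}$ are the values of $\eta$ on the respective uniformizers, the Euler factor $\mathcal{E}(\F_\phi,\G_\psi,\eta,j)$ from Theorem~\ref{thm:hida} splits as a product of the MTT modifying factors for $\F_\phi$ and for $\F_\phi\otimes\epsilon_\cK$, up to Gauss-sum factors that are $p$-adic units.

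The main obstacle is matching the period normalizations. On the Hida side, Shimura's formula identifies the Petersson norm $\langle\F_\phi,\F_\phi^c\vert\cdots\rangle_{N}$ with $\Omega_{\F_\phi}^{+}\Omega_{\F_\phi}^{-}$ up to an algebraic factor, and multiplication by the congruence generator $c_\F$, which is well-defined up to $\phi(\cR)^\times$ thanks to the Gorenstein property of $\mathbb{T}_{\mathfrak{m}_\F}$ afforded by our irreducibility and $p$-distinguished hypotheses, converts this denominator into a $p$-adic unit multiple of $\Omega_{\F_\phi}^{+}\Omega_{\F_\phi}^{-}$ by Hida's congruence-period formula. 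On the MTT side, the product at $(\nu,j)$ carries $\Omega_{\F_\phi}^{\pm}\Omega_{\F_\phi\otimes\epsilon_\cK}^{\mp}$ with mismatched signs forced by $\epsilon_\cK(-1)=-1$; the standard relation $\Omega_{\F_\phi\otimes\epsilon_\cK}^\pm\sim\Omega_{\F_\phi}^\mp$, valid up to Gauss sums of $\epsilon_\cK$ (which are $p$-adic units as $\epsilon_\cK$ has conductor prime to $p$), then aligns the two denominators. Combining the $L$-value factorization, the Euler factor factorization, and the period matching yields the identity at each of our dense specializations, hence globally up to a unit in $\phi(\cR)[[\Gamma]]^\times$.
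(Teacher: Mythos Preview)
Your proposal is correct and follows essentially the same strategy as the paper: compare interpolation formulas at a dense set of critical specializations, using Artin formalism to factor the complex $L$-values $L(\F_\phi/\cK,\chi,j)=L(\F_\phi,\chi,j)\cdot L(\F_\phi\otimes\epsilon_\cK,\chi,j)$, match the modified Euler factors at $p$, and reconcile the periods via Hida's congruence-period formula. The paper's proof is more terse only because it delegates the detailed interpolation comparison to existing results in the literature (specifically \cite[Cor.~2.2]{BL-ord} and \cite[Thm.~12.8]{SU}), and packages your two-step period argument---first $\langle\F_\phi,\F_\phi\rangle_N\cdot\phi(c_\F)^{-1}\sim\Omega_{\F_\phi}^{+}\Omega_{\F_\phi}^{-}$, then $\Omega_{\F_\phi}^{\mp}\sim\Omega_{\F_\phi\otimes\epsilon_\cK}^{\pm}$---into the single relation
\[
\langle\F_\phi,\F_\phi\rangle_{N}\cdot\phi(c_{\F})^{-1}=u\cdot\Omega_{\F_\phi}^\pm\cdot\Omega_{\F_\phi\otimes\epsilon_\cK}^\pm
\]
quoted from \cite[Thm.~0.1]{hida-AJM88} and \cite[Lem.~12.1]{SU}.
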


\begin{proof}
Since we assume that $\bar{\rho}_{\F}$ satisfies the hypotheses (\ref{ass:irred}), by \cite[Thm.~0.1]{hida-AJM88} (see also \cite[Lem.~12.1]{SU}) for every $\phi\in\mathcal{X}_a^o(\cR)$ we have the relation 
\[
\langle\F_\phi,\F_\phi\rangle_{N}\cdot\phi(c_{\F})^{-1}=u\cdot\Omega_{\F_\phi}^\pm\cdot\Omega_{\F_\phi\otimes\epsilon_\cK}^\pm
\]
between the periods appearing in the interpolation property of the respective sides of the claimed equality, for some unit $u\in\phi(\cR)^\times$. Since by construction, $L_p^{\tt Hi}(\F/\cK)$ specializes at $\phi$ to the $p$-adic $L$-function $L_p(\F_\phi/\cK)$ considered 
in \cite{BL-ord}, 
the result thus follows from  \cite[Cor.~2.2]{BL-ord} and  \cite[Thm.~12.8]{SU}. 
\end{proof}

\subsection{Dominant CM: $\F_1=\G$}\label{subsec:L-1}\label{sec:nondom-CM}

As in $\S\ref{sec:dom-CM}$, let $\F\in\cR[[q]]$ be a fixed Hida family of tame level $N$, and let $\G$ be the CM Hida family in  (\ref{eq:g-CM}). 

Let $\unr$ be the completion of the ring of integers of the maximal unramified extension of $\bQ_p$, and set $\cR^{\rm ur}:=\cR\hat\otimes_{\bZ_p}\unr$. 

By \cite[\S{5.3.0}]{Katz49} (see also \cite[Thm.~II.4.14]{de-shalit}) there exists a $p$-adic $L$-function
$\mathscr{L}_{\pp}^{}(\cK)\in\unr[[\Gamma_\cK]]$
such that if $\psi$ is a character of $\Gamma_\cK$ corresponding to an algebraic Hecke character of $\cK$ with trivial conductor 
and infinity type $(a,b)$ with $0\leqslant-b<a$, then
\begin{equation}\label{eq:Katz}
\mathscr{L}^{}_{\pp}(\cK)(\psi)=\biggl(\frac{\sqrt{D_\cK}}{2\pi}\biggr)^{a}
\cdot\Gamma(b)\cdot(1-\psi(\pp))\cdot(1-p^{-1}\psi^{-1}(\overline\pp))
\cdot\frac{\Omega_p^{b-a}}{\Omega_K^{b-a}}\cdot L_{}(\psi,0),
\end{equation}
where $\Omega_K\in\bC^\times$ and $\Omega_p\in\bC_p^\times$ are certain CM periods (as defined in e.g. \cite[\S{2.5}]{cas-hsieh1}). 

Let $h_\cK$ be the class number of $\cK$, $w_\cK:=\vert\cO_\cK^\times\vert$, and set
\begin{equation}\label{eq:factor-hida}
\mathscr{L}_{\pp}(\F/\cK):=\biggl(\frac{h_\cK}{w_\cK}\mathscr{L}_\pp(\cK)_{\ac}\biggr)\cdot L_{p}(\G,\F),
\end{equation}
where $\mathscr{L}_\pp(\cK)_{\ac}$ is the anticyclotomic projection of $\mathscr{L}_\pp(\cK)$. A priori, $\mathscr{L}_\pp(\F/\cK)$ is an element in $I_\G\hat\otimes_{\bZ_p}\cR^{\rm ur}[[\Gamma_\cK]]$ but comparing its interpolating property with that of a different $3$-variable $p$-adic $L$-function, we can show its integrality. 


\begin{prop}
The $p$-adic $L$-function in {\rm (\ref{eq:factor-hida})} is integral, i.e., $\mathscr{L}_\pp(\F/\cK)\in\cR^{\rm ur}[[\Gamma_\cK]]$
\end{prop}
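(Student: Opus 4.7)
The plan is to identify $\mathscr{L}_\pp(\F/\cK)$, up to a unit in $\cR^{\rm ur}[[\Gamma_\cK]]^\times$, with a manifestly integral $3$-variable $p$-adic $L$-function attached to $\F$ and $\cK$. A natural candidate is the ``BDP-type'' $p$-adic $L$-function $\mathcal{L}_p^{\rm BDP}(\F/\cK)\in\cR^{\rm ur}[[\Gamma_\cK]]$ obtained by $p$-adically interpolating the CM values of $\F$, along the lines of \cite{bdp1} and its generalization to Hida families in \cite{cas-hsieh1}; this object lies in $\cR^{\rm ur}[[\Gamma_\cK]]$ by construction. Since $\cR^{\rm ur}[[\Gamma_\cK]]$ is integrally closed in its fraction field and the arithmetic points are Zariski dense in $\mathrm{Spec}(\cR[[\Gamma_\cK]])$, it suffices to verify that the two $p$-adic $L$-functions have the same interpolation property at a Zariski dense set of such points.

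Concretely, I would evaluate $\phi(\mathscr{L}_\pp(\F/\cK))$ at specializations $\phi=(\phi_\F,\psi)$ with $\phi_\F\in\mathcal{X}_a(\cR)$ giving a $p$-stabilized newform $f=\F_{\phi_\F}$ of weight $k\geqslant 2$, and $\psi$ a Hecke character of $\cK$ whose cyclotomic and anticyclotomic components lie in the interpolation ranges of Theorem~\ref{thm:hida} (applied now with the CM family $\G$ as the dominant one) and of \eqref{eq:Katz}, respectively. Using \eqref{eq:factor-hida}, $\phi(\mathscr{L}_\pp(\F/\cK))$ factors as the product of Katz's value---with its CM periods $\Omega_p^{b-a}/\Omega_\cK^{b-a}$ and the factor $L(\psi_{\rm ac},0)$---and Hida's value, which involves the Rankin--Selberg $L$-value $L(\G_\phi,\F_\phi,\chi^{-1},j)$ and the Petersson norm of the CM form $\G_\phi$. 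By Artin formalism the former factors as a base-changed $L$-value $L(f/\cK,\xi,j)$ for an appropriate Hecke character $\xi$ of $\cK$ built from $\psi$ and the CM character defining $\G_\phi$; by Shimura's formula for CM Petersson norms, $\langle\G_\phi,\G_\phi^c\rangle$ equals, up to an algebraic factor involving $h_\cK/w_\cK$, a product of CM periods $\Omega_\cK$ times a special CM $L$-value that exactly cancels the factor $L(\psi_{\rm ac},0)$ from \eqref{eq:Katz}. What remains is the BDP-type interpolation formula: $L(f/\cK,\xi,j)$ times explicit Euler factors at $\pp,\ppbar$ and an appropriate power of $\Omega_p/\Omega_\cK$.

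Matching the two interpolation formulas at a Zariski dense set of $\phi$ then yields $\mathscr{L}_\pp(\F/\cK)=u\cdot\mathcal{L}_p^{\rm BDP}(\F/\cK)$ for some unit $u\in\cR^{\rm ur}[[\Gamma_\cK]]^\times$, and hence integrality. The principal obstacle is the careful bookkeeping of the transcendental and $p$-adic correction factors (gamma factors, powers of $2\pi i$, Gauss sums, and $p$-adic unit roots) appearing in both \eqref{eq:Katz} and Theorem~\ref{thm:hida}, together with checking that the normalizing constant $h_\cK/w_\cK$ inserted in \eqref{eq:factor-hida} is precisely what is dictated by Hida's congruence number formula for CM Hida families---which relates the congruence ideal $I_\G$ to an anticyclotomic Katz value---so as to absorb the a priori denominator lurking in $I_\G\hat\otimes_{\bZ_p}\cR^{\rm ur}[[\Gamma_\cK]]$.
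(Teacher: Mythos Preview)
The plan has a real gap: the integral comparison object you posit does not exist in three variables. The BDP-type constructions of \cite{bdp1}, \cite{cas-hsieh1}, and Theorem~\ref{thm:bdp} of this paper yield an element $\mathscr{L}_\pp^{\tt BDP}(\F/\cK)\in\cR^{\rm ur}[[\Gamma_\cK^{\ac}]]$ over the \emph{anticyclotomic} line only. This is intrinsic to the method, which interpolates CM values of $\F$ along the $\pp$-isogeny tower and hence only sees the anticyclotomic variable; no three-variable analogue over all of $\Gamma_\cK$ is produced by the references you cite. So there is nothing integral in $\cR^{\rm ur}[[\Gamma_\cK]]$ to match against, and the Zariski-density argument cannot even get started.

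The paper follows a route that is similar in spirit but uses a different comparison object: the $\Sigma$-imprimitive $p$-adic $L$-function $\mathfrak{L}_\pp^\Sigma(\F/\cK)\in\cR^{\rm ur}[[\Gamma_\cK]]$ constructed in \cite{wanIMC} via Eisenstein families on ${\rm GU}(3,1)$, which \emph{is} integral by construction in all three variables. Your computation of the Petersson norm of $\G_\phi$ in terms of $h_\cK$ and an anticyclotomic Katz value is exactly the ingredient the paper uses to match the two interpolation formulas. But even after that, interpolation comparison alone does not finish the argument: restoring the Euler factors at $\Sigma$ introduces potential denominators on one side, while the relation between $I_\G$ and $\mathscr{L}_\pp(\cK)_{\ac}$ leaves potential denominators on the other. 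The paper concludes with a disjointness argument---the possible denominators from the $I_\G$ side are powers of the augmentation ideal of $\bZ_p[[\Gamma_\pp]]$, whereas those from the Euler-factor side are powers of $p$ or local factors at primes in $\Sigma$---and since these sets are disjoint there can be no denominators at all. Your proposal is missing both the correct three-variable comparison object and this final step.
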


\begin{proof}
For any finite set $\Sigma$ of places $\cK$ outside $p$ and containing all the places dividing $N D_\cK$, the results of \cite[\S{7.5}]{wanIMC} yield the construction of the ``$\Sigma$-imprimitive'' element 
\[
\mathfrak{L}_\pp^\Sigma(\F/\cK)\in\cR^{\rm ur}[[\Gamma_\cK]]
\]
characterized by the following interpolation property.
For a Zariski dense set of arithmetic points $\phi\in{\rm Spec}(\cR[[\Gamma_\cK]])$ with $\F_\phi$  of weight $2$ and conductor $p^tN$	generating a unitary  $\pi_{\F_\phi}\simeq\pi(\chi_{1,p},\chi_{2,p})$ with $v_p(\chi_{1,p}(p))=-\frac{1}{2}$ and $v_p(\chi_{2,p}(p))=\frac{1}{2}$, and with $\boldsymbol{\psi}_\phi$ a Hecke character of $\cK$ of infinity type $(-n,0)$ for some $n\geqslant 3$ and conductor $p^t$, we have:	
\begin{align*}
\phi(\mathfrak{L}_{\pp}(\F/\cK))&=p^{(n-3)t}\boldsymbol{\psi}_{\phi,\pp}^2\chi_{1,p}^{-1}\chi_{2,p}^{-1}(p^{-t})G(\boldsymbol{\psi}_{\phi,\pp}\chi_{1,p}^{-1})G(\boldsymbol{\psi}_{\phi,\pp}\chi_{2,p}^{-1})\Gamma(n)\Gamma(n-1)\Omega_p^{2n}\\
&\quad\times \frac{L^\Sigma(\F_\phi,\chi^{-1}_{\phi}\boldsymbol{\psi}_\phi,0)}{(2\pi i)^{2n-1}\Omega_\cK^{2n}},
\end{align*}
where $\chi_{\phi}$ is the nebentypus of $\F_\phi$, $\Omega_p\in\bC_p^\times$ and $\Omega_\cK\in\bC^\times$ are CM periods, and $L^\Sigma(\F_\phi,\chi^{-1}_{\phi}\boldsymbol{\psi}_\phi,0)$ is the $\Sigma$-imprimitive Rankin--Selberg $L$-values. Setting
\begin{equation}\label{eq:prim}
\mathfrak{L}_\pp(\F/\cK):=\mathfrak{L}_\pp^\Sigma(\F/\cK)\times\prod_{w\in\Sigma}P_w(\Psi_\cK({\rm Frob}_w))^{-1},
\end{equation}
where $P_w$ is the Euler factor at $w$ and $\Psi_\cK:G_\cK\twoheadrightarrow\Gamma_\cK$ is the natural projection, we thus obtain an element interpolating the Rankin--Selberg $L$-values themselves, but which \emph{a priori} is just an element in the fraction field of $\cR^{\rm ur}[[\Gamma_\cK]]$. To see the inclusion $\mathfrak{L}_\pp(\F/\cK)\in\cR^{\rm ur}[[\Gamma_\cK]]$ we shall compare $(\ref{eq:prim})$ with the product in the right-hand side of $(\ref{eq:factor-hida})$; the required integrality of $\mathscr{L}_\pp(\F/\cK)$ will follow from this comparison. 

Any arithmetic point $\phi\in\mathrm{Spec}(\cR[[\Gamma_\cK]])$ as above can be written as the product $\boldsymbol{\psi}_\phi'\cdot\boldsymbol{\psi}_\phi''$, with $\boldsymbol{\psi}_\phi'$ cyclotomic (i.e., factoring through $\Gamma_\cK\twoheadrightarrow\Gamma_\cK^{\rm cyc}$), and $\boldsymbol{\psi}_\phi''$ corresponding to a Hecke character unramified at $\overline{\pp}$ and of infinity type $(-n,0)$. Then $\chi^{-1}_{\phi}\boldsymbol{\psi}'_\phi$ (resp. the theta series of $\boldsymbol{\psi}''_\phi$) corresponds to $\chi\vert\cdot\vert^j$ (resp. $f_1=\G_\phi$) in Theorem~\ref{thm:hida}, so that
\[ 
L(\F_\phi,\chi_{\phi}^{-1}\boldsymbol{\psi}_\phi,0)=L(f_1,f_2,\chi^{-1},j)
\]
with $f_2=\F_\phi$. Letting $p^tD_\cK$
be the conductor of $\G_\phi$, according to \cite[Thm.~7.1]{HT-ENS} a direct calculation shows that the product $\mathcal{E}(\G_\phi)\mathcal{E}^*(\G_\phi)\cdot\langle\G_\phi,\G_\phi\rangle_{p^tD_\cK}$ in Theorem~\ref{thm:hida} agrees with
\begin{equation}\label{eq:factor-RS}
\frac{\Gamma(n)G(\boldsymbol{\psi}''^{-1}_{\phi,\bar{\frakp}})L(\boldsymbol{\psi}''_\phi(\boldsymbol{\psi}''_\phi)^{-c}, 1)}{(-2\pi i)^n}\cdot\frac{L(\epsilon_\cK,1)}{-2\pi i}
\end{equation}
up to a $p$-adic unit independent of $\phi$, where $\epsilon_\cK$ is the quadratic character attached to $\cK$. 

By the class number formula, the second factor in $(\ref{eq:factor-RS})$ is given by $h_\cK$ up to a $p$-adic unit, while by the interpolation property of the Katz $p$-adic $L$-function $\mathscr{L}_\pp(\cK)$ (see \cite[\S{5.3.0}]{Katz49}), the left factor multiplied by $(\Omega_p/\Omega_\cK)^{2n}$ is interpolated, for varying $\phi$, by the anti-cyclotomic projection of $\mathscr{L}_\pp(\cK)$ viewed as an element in $\unr[[\Gamma_{\pp}]]$. This shows the factorization that $\mathscr{L}_\pp(\F/\cK)$ in $(\ref{eq:factor-hida})$ and $\mathfrak{L}_\pp(\F/K)$ differ by a unit.

Finally, by the proof of \cite[Prop.~8.3]{wan-combined}, the only possible denominators of $\mathscr{L}_\pp(\F/\cK)$ are powers of the augmentation ideal of $\bZ_p[[\Gamma_{\pp}]]$, while by $(\ref{eq:prim})$ the possible denominators can only be either powers of $p$ or factors coming from Euler factors at primes $w\in\Sigma$. Since these two sets are disjoint, integrality of $\mathfrak{L}_\pp(\F/\cK)$, and hence of $\mathscr{L}_\pp(\F/\cK)$, follows.
\end{proof}


We conclude this section by discussing the anticyclotomic restriction of $\mathscr{L}_\pp(\F/\cK)$, which contrary to $L_p^{\tt Hi}(\F/\cK)$ will be nonzero under the generalized Heegner hypothesis.

\begin{thm}\label{thm:bdp}
Assume 
that $\cK$ satisfies the hypothesis {\rm (\ref{eq:gen-Heeg-f})}, and if $N^->1$ assume in addition that $N$ is squarefree. Then there exists an element $\mathscr{L}_{\pp}^{\tt BDP}(\F/\cK)\in\cR^{\rm ur}[[\Gamma_\cK^{\ac}]]$ such that for every $\phi\in\mathcal{X}_a(\cR)$ of weight $k$ and trivial nebentypus, 
and every crystalline character $\psi$ of $\Gamma_\cK^\ac$ corresponding to a Hecke character of $\cK$ of infinity type $(n,-n)$ with $n\geqslant 0$, we have
\begin{align*}
\phi(\mathscr{L}_{\pp}^{\tt BDP}(\F/\cK)^2)(\psi)
&=\mathcal{E}_\pp(\F_\phi,\psi)^2\cdot
\psi(\mathfrak{N}^+)^{-1}\cdot 2^3\cdot\varepsilon(\F_\phi)\cdot w_\cK^2\sqrt{D_\cK}\cdot\Gamma(k+n)\Gamma(n+1)\Omega_p^{2k+4n}\\
&\quad\times\frac{L(\F_\phi/\cK,\psi,k/2)\cdot\alpha(\F_\phi,\F_\phi^B)^{-1}}{(2\pi)^{k+2n+1}\cdot({\rm Im}\;\boldsymbol{\theta})^{k+2n}\cdot\Omega_\cK^{2k+4n}},
\end{align*}
where $\mathcal{E}_\pp(\F_\phi,\psi)=(1-\phi(\boldsymbol{a}_p)\psi_{\overline\pp}(p)p^{-k/2})
(1-\phi(\boldsymbol{a}_p)^{-1}\psi_{\overline\pp}(p)p^{k/2-1})$, 
$\varepsilon(\F_\phi)$ is the global root number of $\F_\phi$, $w_\cK:=\vert\cO_\cK^\times\vert$, $\Omega_p\in\bC_p^\times$ and $\Omega_\cK\in\bC^\times$ are CM periods attached to $\cK$ as \cite[\S{2.5}]{cas-hsieh1}, $\boldsymbol{\theta}\in\cK$ is as in {\rm (\ref{eq:vartheta})} below, and  
\[
\alpha(\F_\phi,\F_\phi^B)=\frac{\langle\F_\phi,\F_\phi\rangle}{\langle\F_\phi^B,\F_\phi^B\rangle}
\]
is a ratio of Petersson norms of $\F_\phi$ and its transfer $\F_\phi^B$ to a quaternion algebra, normalized as in \cite[\S{2.2}]{prasanna}.
\end{thm}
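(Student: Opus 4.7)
The plan is to construct $\mathscr{L}_{\pp}^{\tt BDP}(\F/\cK)$ as a three-variable analogue of the Bertolini--Darmon--Prasanna $p$-adic $L$-function, realizing its square via a Waldspurger-type toric integral formula on a definite quaternion algebra. The hypothesis $(\ref{eq:gen-Heeg-f})$ that $N^-$ is a squarefree product of an even number of primes guarantees the existence of a definite quaternion algebra $B/\bQ$ ramified exactly at the primes dividing $N^-\infty$, and the assumption that $N$ is squarefree when $N^->1$ ensures that the Jacquet--Langlands correspondence transfers $\F$ to a $\cR$-adic family $\F^B$ of quaternionic modular forms of tame level $\mathfrak{N}^+\subset\cO_B$, which is what the period ratio $\alpha(\F_\phi,\F_\phi^B)$ is there to accommodate.

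Concretely, the construction proceeds in three steps. First, using Hida's theory of quaternionic $\cR$-adic forms (as developed in \cite{cas-hsieh1,hsieh-triple} and the references therein), transfer $\F$ to $\F^B$ and normalize it via the canonical generator $c_\F$ of the congruence module introduced in $\S\ref{subsec:congr}$. Second, fix an optimal embedding $\iota:\cK\hookrightarrow B$ compatible with $\mathfrak{N}^+$ and with the fixed $p$-adic place $\pp$ (so that CM points have level structure encoded by $\mathfrak{N}^+$); via $\iota$, one obtains a canonical $\Lambda$-adic family of CM points with $\Gamma_\cK^{\ac}$-action. Third, evaluate $\F^B$ along this family of CM points and integrate against the tautological character $\Psi^{\ac}:G_\cK\twoheadrightarrow\Gamma_\cK^{\ac}$. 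This yields an element
\[
\mathscr{L}_{\pp}^{\tt BDP}(\F/\cK)\in\cR^{\rm ur}[[\Gamma_\cK^{\ac}]].
\]

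To verify the interpolation, one specializes at $\phi\in\mathcal{X}_a(\cR)$ of weight $k$ and a crystalline character $\psi$ of infinity type $(n,-n)$ with $n\geqslant 0$: the evaluation at the CM point is, by classical theory, a quaternionic toric period attached to $\F_\phi^B$ and $\psi$. Squaring, and applying Waldspurger's central value formula together with the explicit local calculation (at $p$, at primes in $N^+$, and at the archimedean place) provides the central $L$-value $L(\F_\phi/\cK,\psi,k/2)$ times the Euler factor $\mathcal{E}_\pp(\F_\phi,\psi)^2$ arising from the local test vector at $\pp$ in the $\pp$-ordinary setting, the constant $\psi(\mathfrak{N}^+)^{-1}$ from the local integrals at the split primes dividing $N^+$, and the gamma factors $\Gamma(k+n)\Gamma(n+1)$ together with the period ratio $\Omega_p^{2k+4n}/\Omega_\cK^{2k+4n}$ from the archimedean comparison (cf.\ \cite[\S{2.5}]{cas-hsieh1}). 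The factor $\alpha(\F_\phi,\F_\phi^B)^{-1}$ comes in when relating the norm $\langle\F_\phi^B,\F_\phi^B\rangle$ appearing in Waldspurger's formula to $\langle\F_\phi,\F_\phi\rangle$, and the sign $\varepsilon(\F_\phi)$ together with the factor $w_\cK^2\sqrt{D_\cK}$ match the standard normalizations in \cite{bdp1,cas-hsieh1}.

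The main obstacle is the careful matching of all local factors and period normalizations in the three-variable family: in particular, establishing the precise agreement between the $p$-adic interpolation of $\F^B$-values at CM points (which naturally involves the period $\Omega_p^{2k+4n}$) and the squared Waldspurger formula requires controlling the local zeta integrals at $p$ as both $\phi$ and $\psi$ vary, and exhibiting the Euler factor $\mathcal{E}_\pp(\F_\phi,\psi)^2$ as the deviation from the naive local integral. Once this local calculation at $\pp$ is carried out with the test vector that interpolates $p$-adically, the Zariski density of the specializations $(\phi,\psi)$ under consideration in $\mathrm{Spec}\,\cR^{\rm ur}[[\Gamma_\cK^{\ac}]]$ pins down the element $\mathscr{L}_{\pp}^{\tt BDP}(\F/\cK)$ uniquely.
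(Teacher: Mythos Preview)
There is a genuine error at the very start of your construction. You claim that hypothesis~(\ref{eq:gen-Heeg-f}) ``guarantees the existence of a definite quaternion algebra $B/\bQ$ ramified exactly at the primes dividing $N^-\infty$.'' But a quaternion algebra over $\bQ$ is ramified at an \emph{even} number of places, and under (\ref{eq:gen-Heeg-f}) the set $\{\ell:\ell\mid N^-\}\cup\{\infty\}$ has odd cardinality. So the algebra you want does not exist, and everything built on it (the transfer $\F^B$, the toric periods, the Waldspurger formula on the definite side) collapses.

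The paper takes the opposite route: under (\ref{eq:gen-Heeg-f}) the quaternion algebra $B$ of discriminant $N^-$ is \emph{indefinite} (this is exactly the $B$ introduced in \S\ref{subsec:Sh}), and the construction lives on the associated Shimura curves. One works with the Igusa tower ${\rm Ig}_{N^+,N^-}$ classifying abelian surfaces with $\cO_B$-multiplication and $U_\infty$-level structure, and with the space of $p$-adic modular forms $V_p(\cR)$ on it. Lacking $q$-expansions in the quaternionic setting, the role of the $q$-expansion principle is played by Serre--Tate expansions at CM points $x(\mathfrak{a})$ and the resulting $t$-expansion principle; this is what lets one interpolate a family $\F^B\in V_p(\cR)$ corresponding to $\F$ via the integral Jacquet--Langlands correspondence, normalized so that some Serre--Tate expansion is nonzero modulo $p$. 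One then forms the $p$-depletion $\F_B^\flat=\F_B\vert(VU-UV)$, packages the values at CM points into an $\cR^{\rm ur}$-valued measure on ${\rm Gal}(H_{p^\infty}/\cK)$, and projects to $\cR^{\rm ur}[[\Gamma_\cK^{\ac}]]$. The stated interpolation for the square then reduces, specialization by specialization, to the explicit formula in \cite[Prop.~8.9]{brooks} (and \cite[Thm.~2.11]{cas-2var} when $N^-=1$), where the factor $\alpha(\F_\phi,\F_\phi^B)^{-1}$ appears precisely because the Waldspurger-type identity in that setting is expressed in terms of $\langle\F_\phi^B,\F_\phi^B\rangle$ rather than $\langle\F_\phi,\F_\phi\rangle$.

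In short, your instinct to square a toric period and invoke Waldspurger is the right heuristic for why the interpolation formula looks the way it does, but the actual construction must take place on the indefinite Shimura curve side, via $p$-adic modular forms and CM points, not on a (nonexistent) definite quaternion algebra.
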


\begin{proof}
When $N^-=1$, this is \cite[Thm.~2.11]{cas-2var} (in which case $\alpha(\F_\phi,\F_\phi^B)=1$). In the following   we sketch how to extend that result to include the more general hypothesis (\ref{ass:gen-H}). Some of the notations used here will be introduced later in $\S\ref{sec:HP}$.

Let $\cO_B$ be a maximal order of $B$, and let ${\rm Ig}_{N^+,N^-}$  be the Igusa scheme over $\bZ_{(p)}$ classifying abelian surfaces with $\cO_B$-multiplication and $U_\infty$-level structure (here $U_\infty$ is the open compact $U_r\subset\hat{R}_r^\times$ in $\S\ref{subsec:Sh}$ with $r=\infty$). For any valuation ring $W$ finite flat over $\bZ_p$, denote by $V_p(W)$ the module of formal functions on ${\rm Ig}_{N^+,N^-}$ (i.e., $p$-adic modular forms) defined over $W$, and set
\[
V_p(\cR):=V_p(W_0)\hat{\otimes}_{W_0}\cR,
\]
where $W_0=W(\kappa_\cR)$ is the ring of Witt vectors of the residue field of $\cR$. For every $\cO_\cK$-ideal $\mathfrak{a}$ prime to $\mathfrak{N}^+\pp$, the construction of $\varsigma^{(s)}$ (for arbitrary $s\geqslant 0$) in $\S\ref{subsec:construct}$ determines CM points $x(\mathfrak{a})\in{\rm Ig}_{N^+,N^-}$, and the argument in \cite[Thm.~3.2.16]{hida-GME} with the use of $q$-expansions and the $q$-expansion principle replaced by Serre--Tate-expansions and the resulting $t$-expansion principle around any such $x(\mathfrak{a})$ (see e.g \cite[p.~107]{hida-mu}) 
shows that every element $\F^B\in V_p(\cR)$ defines a $p$-adic family  (in fact, finite collections of such, since $\cR$ is finite over $W_0[[T]]$) of $p$-adic modular forms $\F^B_z=\F^B(u^z-1)\in V_p(W_0)$,  where $u=1+p$, indexed by $z\in\bZ_p$. 

The Hida family $\F$ corresponds to minimal prime in the localized universal $p$-ordinary Hecke algebra $\mathbb{T}_{\infty,\mathfrak{m}}^{\rm ord}$,  and by the integral Jacquet--Langlands correspondence (see e.g. the discussion in \cite[\S{5.3}]{LV}), there exists a $p$-adic family $\F_B$ as above corresponding to $\F$, which we normalize by requiring that some Serre--Tate expansion $\F_z^B(t)$ does not vanish modulo $p$. 

There are $U$- and $V$-operators acting on $\F_B$ defined as in \cite[\S{3.6}]{brooks}, and we set 
\[
\F_B^\flat:=\F_B\vert(VU-UV).
\]
With these, we may define $\cR^{\rm ur}$-valued measures $\mu_{\F_B,x(\mathfrak{a})}$ and $\mu_{\F_{B,\mathfrak{a}}^\flat}$ on $\bZ_p$ (with the latter supported on $\bZ_p^\times$ by \cite[Prop.~4.17]{brooks}) as in \cite[\S{2.7}]{cas-2var}, and define $\mathscr{L}_{\pp,\boldsymbol{\xi}}(\F)$ to be the $\cR^{\rm ur}$-valued measure on ${\rm Gal}(H_{p^\infty}/\cK)$ given by
\[
\mathscr{L}_{\pp,\boldsymbol{\xi}}(\F)(\phi)=
\sum_{[\mathfrak{a}]\in{\rm Pic}(\cO_\cK)}
\boldsymbol{\xi}\boldsymbol{\chi}^{-1}(\mathfrak{a})\mathbf{N}(\mathfrak{a})^{-1}
\int_{\bZ_p^\times}(\phi\vert[\mathfrak{a}])(z){\rm d}\mu_{\F^\flat_{B,\mathfrak{a}}}(z)
\]
for all $\phi:{\rm Gal}(H_{p^\infty}/\cK)\rightarrow\cO_{\bC_p}^\times$, where, if $\sigma_{\mathfrak{a}}$ corresponds to $\mathfrak{a}$ under the Artin reciprocity map, $\phi\vert[\mathfrak{a}]$ is the character on $z\in\bZ_p^\times$ given by $\phi(\sigma_\mathfrak{a}{\rm rec}_{{\pp}}(z))$ for the local reciprocity map ${\rm rec}_\pp:\cK_\pp^\times\rightarrow G_\cK^{\rm ab}\rightarrow\Gamma_\cK^\ac$, $\boldsymbol{\chi}:\cK^\times\backslash\mathbb{A}_\cK^\times\rightarrow\cR^\times$ is the character given by $x\mapsto\Theta({\rm rec}_\bQ({\rm N}_{\cK/\bQ}(x)))$ for the reciprocity map ${\rm rec}_\bQ:\bQ^\times\backslash\mathbb{A}^\times\rightarrow G_\bQ^{\rm ab}$, and $\boldsymbol{\xi}$ is the auxiliary anticyclotomic $\cR$-adic character constructed in \cite[Def.~2.8]{cas-2var}. 

Still denoting by $\mathscr{L}_{\pp,\boldsymbol{\xi}}(\F)$ its image under the natural projection $\cR^{\rm ur}[[{\rm Gal}(H_{p^\infty}/\cK)]]\rightarrow\cR^{\rm ur}[[\Gamma_\cK^{\rm ac}]]$, and setting 
\[
\mathscr{L}_\pp(\F)={\rm tw}_{\boldsymbol{\xi}^{-1}}(\mathscr{L}_{\pp,\boldsymbol{\xi}}(\F)),
\] 
one then readily checks as in the proof of \cite[Thm.~2.11]{cas-2var} that for every $\phi\in\mathcal{X}_a^o(\cR)$, the measure $\phi(\mathscr{L}_\pp(\F))$ agrees with the measure constructed in \cite[\S{8.4}]{brooks} (in a formulation germane to that in \cite[\S{5.2}]{burungale-II}) attached to $\F_\phi$, from where the stated interpolation property follows from \cite[Prop.~8.9]{brooks}.
\end{proof}

\begin{cor}\label{cor:wan-bdp}
Let the hypotheses be as in Theorem~\ref{thm:bdp}, and denote by $\mathscr{L}_\pp(\F^\dagger/\cK)_{\ac}$ the image of  ${\rm tw}_{\Theta^{-1}}(\mathscr{L}_\pp(\F/\cK))$
under the natural projection $\cR^{\rm ur}[[\Gamma_\cK]]\rightarrow\cR^{\rm ur}[[\Gamma_\cK^{\ac}]]$. Then
\begin{equation}\label{eq:wan-bdp}
\mathscr{L}_\pp(\F^\dagger/\cK)_{\ac}=\mathscr{L}^{\tt BDP}_\pp(\F/\cK)^2
\end{equation}
up to a unit in $\cR^{\rm ur}[[\Gamma_\cK^\ac]][1/p]^\times$. In particular, $\mathscr{L}_\pp(\F^\dagger/\cK)_{\ac}$ is nonzero.
\end{cor}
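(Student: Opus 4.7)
The plan is to prove (\ref{eq:wan-bdp}) by comparing interpolation formulas at a Zariski dense set of anticyclotomic arithmetic characters, and then invoke uniqueness in $\cR^{\rm ur}[[\Gamma_\cK^\ac]][1/p]$. Both sides of (\ref{eq:wan-bdp}) lie in $\cR^{\rm ur}[[\Gamma_\cK^\ac]]$ --- the left by the integrality of $\mathscr{L}_\pp(\F/\cK)$ established in the preceding proposition and the compatibility of integrality with the self-dual twist ${\rm tw}_{\Theta^{-1}}$; the right by Theorem~\ref{thm:bdp} --- so it suffices to check that their values at a dense set of such characters agree up to a $p$-adic unit independent of the specialization.

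First, I would fix $\phi\in\mathcal{X}_a(\cR)$ of weight $k$ with trivial nebentypus, together with a crystalline character $\psi$ of $\Gamma_\cK^\ac$ of infinity type $(n,-n)$ with $n$ sufficiently large so that the corresponding point lies in the range of interpolation on both sides. The value of $\mathscr{L}_\pp(\F^\dagger/\cK)_\ac$ at such $(\phi,\psi)$ can be read off from the integral form $\mathfrak{L}_\pp(\F/\cK)$ used in the proof of the preceding proposition: it interpolates a Rankin--Selberg value $L(\F_\phi,\chi_\phi^{-1}\boldsymbol{\psi},0)$ for an explicit Hecke character $\boldsymbol{\psi}$ of $\cK$, and by Artin formalism this equals a base-change value $L(\F_\phi/\cK,\mu,k/2)$ for an anticyclotomic character $\mu$ determined by $(\phi,\psi)$ and the twist by $\Theta^{-1}$.

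Next, I would match this with the square of the BDP interpolation of Theorem~\ref{thm:bdp}. Both sides evaluate, up to explicit factors, to the same central critical value $L(\F_\phi/\cK,\mu,k/2)$; the square on the right reflects the fact that $\mathscr{L}_\pp^{\tt BDP}(\F/\cK)^2$ --- rather than $\mathscr{L}_\pp^{\tt BDP}(\F/\cK)$ itself --- is what interpolates that central value. Granted this identification of the $L$-values, the comparison reduces to checking that the Euler factors at $p$ coming from Theorem~\ref{thm:hida} and from the Katz factor (\ref{eq:Katz}) combine into $\mathcal{E}_\pp(\F_\phi,\mu)^2$, and that the ratio of CM periods $(\Omega_p/\Omega_\cK)^{2k+4n}$ on the right is matched, on the left, by the product of the corresponding ratio coming from $\mathscr{L}_\pp(\cK)_\ac$ and the Hida/Petersson period appearing through the congruence generator $c_{\F}$ hidden in $L_p(\G,\F)$.

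The main obstacle is the bookkeeping of all auxiliary factors --- Gauss sums, Gamma factors, powers of $\sqrt{D_\cK}$ and $w_\cK$, the global root number $\varepsilon(\F_\phi)$, and especially the Petersson-norm ratio $\alpha(\F_\phi,\F_\phi^B)$ appearing in Theorem~\ref{thm:bdp} --- to verify that they combine into a $p$-adic unit bounded independently of $(\phi,\psi)$. Controlling $\alpha(\F_\phi,\F_\phi^B)$ is the most delicate point: it requires comparing Hida's period (the Petersson norm of $\F_\phi$ on $\mathrm{GL}_2$) with the quaternionic period underlying the BDP construction, which is handled via the Jacquet--Langlands correspondence as in \cite{cas-hsieh1}. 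Once all factors match up to a unit, Zariski density of such arithmetic points in $\Spec\,\cR^{\rm ur}[[\Gamma_\cK^\ac]]$ yields (\ref{eq:wan-bdp}), and the nonvanishing of $\mathscr{L}_\pp(\F^\dagger/\cK)_\ac$ follows from the nonvanishing of $\mathscr{L}_\pp^{\tt BDP}(\F/\cK)$, itself a consequence of the interpolation formula at any arithmetic point where the relevant classical central $L$-value is nonzero (e.g.\ via Hsieh's anticyclotomic nonvanishing results).
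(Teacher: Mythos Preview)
Your proposal is correct and follows essentially the same route as the paper: compare interpolation formulas at a Zariski-dense set of anticyclotomic arithmetic points, using the factorization $(\ref{eq:factor-hida})$, the Hida interpolation of Theorem~\ref{thm:hida}, the Katz formula $(\ref{eq:Katz})$, and the BDP formula of Theorem~\ref{thm:bdp}; then deduce nonvanishing from known nonvanishing of the specialized BDP $p$-adic $L$-functions (the paper cites \cite[Thm.~3.9]{cas-hsieh1} and \cite[Thm.~5.7]{burungale-II}, which are exactly the Hsieh-type results you have in mind).

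One point to correct in your bookkeeping: you write that the CM-period ratio on the left is matched by ``the congruence generator $c_{\F}$ hidden in $L_p(\G,\F)$''. There is no $c_{\F}$ here. In $L_p(\G,\F)$ the dominant family is $\G$, so by Theorem~\ref{thm:hida} it is $I_{\G}$ and the Petersson norm of $\G_\phi$ (the CM theta series) that appear, not those of $\F_\phi$. The Petersson norm of $\G_\phi$ is exactly what factors, via $(\ref{eq:factor-RS})$ in the proof of the preceding proposition, into the Katz value times the class number --- and this is precisely why the prefactor $\frac{h_\cK}{w_\cK}\mathscr{L}_\pp(\cK)_\ac$ in $(\ref{eq:factor-hida})$ is there. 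Consequently, no Petersson norm of $\F_\phi$ appears on the left-hand side at all; the ratio $\alpha(\F_\phi,\F_\phi^B)$ on the BDP side does not cancel against anything and must simply be absorbed into the unit ambiguity (it depends only on $\phi$, hence is constant in the $\Gamma_\cK^\ac$-direction and contributes an element of $\cR^{\rm ur}[1/p]^\times$). This is why the equality is only claimed up to a unit in $\cR^{\rm ur}[[\Gamma_\cK^\ac]][1/p]^\times$.
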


\begin{proof}
In light of $(\ref{eq:factor-hida})$, the claimed equality up to a unit follows from a direct comparison of the interpolation properties in Theorem~\ref{thm:hida}, in $(\ref{eq:Katz})$, and in Theorem~\ref{thm:bdp} (\emph{cf.} \cite[\S{3.3}]{JSW}). On the other hand, by construction 
for every $\phi\in\mathcal{X}_a^o(\cR)$ the $p$-adic $L$-function $\mathscr{L}_\pp^{\tt BDP}(\F/\cK)$ specializes at $\phi$ to the $p$-adic $L$-functions 
constructed in \cite[\S{3.3}]{cas-hsieh1} (for $N^-=1$), and in \cite[\S{5.2}]{burungale-II} and \cite[\S{8}]{brooks} (for $N^->1$);  
since the latter are nonzero  by  \cite[Thm.~3.9]{cas-hsieh1} and \cite[Thm.~5.7]{burungale-II}, the last claim in the theorem follows.
\end{proof}


\section{Iwasawa theory}\label{sec:Iw}

Fix a prime $p>3$ and a positive integer $N$ prime to $p$, let 
\[
\F=\sum_{n=1}^\infty\boldsymbol{a}_nq^n\in\cR[[q]]
\]
be a Hida family 
of tame level $N$ and trivial tame character, and let $\cK$ be an imaginary quadratic field of discriminant prime of $Np$ in which $p=\pp\overline{\pp}$ splits. 

Let $\Sigma$ be a finite set of places of $\bQ$ containing $\infty$ and the primes dividing $Np$, and for any number field $F$, let $\mathfrak{G}_{F,\Sigma}$ denote
the Galois group of the maximal extension of $F$ unramified outside
the places above $\Sigma$.

%
%

\subsection{Selmer groups}\label{sec:selmer}

Let $T_\F$ be the big Galois representation associated to $\F$, 
for which we shall take the geometric realization denoted $M(\F)^*$ in \cite[Def.~7.2.5]{KLZ2}. In particular, $T_{\F}$ is a locally free $\cR$-module of rank $2$, and letting $D_p\subset G_{\bQ}$ be the decomposition group at $p$ determined by our fixed embedding $\iota_p:\overline{\bQ}\hookrightarrow\overline{\bQ}_p$, it fits in an exact sequence of $\cR[[D_p]]$-modules
\begin{equation}\label{eq:Gr-f}
0\rightarrow\mathscr{F}^+T_{\F}\rightarrow T_{\F}\rightarrow\mathscr{F}^-T_{\F}\rightarrow 0
\end{equation}
with $\mathscr{F}^\pm T_{\F}$ locally free of rank $1$ over $\cR$, and with the $D_p$-action on the quotient $\mathscr{F}^-T_{\F}$ given by the unramified character sending an arithmetic Frobenius to $\boldsymbol{a}_p\in\cR^\times$. 

Let $k_\cR:=\cR/\mathfrak{m}_\cR$ be the residue field of $\cR$, and denote by $\bar\rho_{\F}:G_\bQ\rightarrow{\rm GL}_2(\kappa_\cR)$ the semi-simple residual representation associated with $T_{\G}$,  which by (\ref{eq:Gr-f}) is conjugate to an upper-triangular representation upon restriction to $D_p$:
\[
\bar\rho_{\F}\vert_{D_p}\sim\left(\begin{smallmatrix}\bar{\varepsilon}&*\\ &\bar\delta\end{smallmatrix}\right).
\]
If $\bar{\rho}_{\F}$ is irreducible and $\bar\varepsilon\neq\bar\delta$, as we shall assume from now on, then by work of Wiles \cite{wiles88} (see also \cite[Thm.~7.2.8]{KLZ2}), $T_{\F}$ is free of rank $2$ over $\cR$, and each $\mathscr{F}^\pm T_{\F}$ is free of rank $1$. 

Recall that we let 
$\Gamma_\cK={\rm Gal}(\cK_\infty/\cK)$ denote the Galois group of the $\bZ_p^2$-extension of $\cK$, and
consider the $\cR[[\Gamma_\cK]]$-module
\[
\mathbf{T}_{}:=T_{\F}\otimes_{\cR}\cR[[\Gamma_\cK]]\quad
\]
equipped with the $G_\cK$-action via $\rho_{\F}\otimes\Psi_\cK$,  
where $\rho_{\F}$ 
is the $G_\bQ$-representation afforded by $T_\F$, and $\Psi_\cK$ is the tautological character $G_\cK\twoheadrightarrow\Gamma_\cK\hookrightarrow\cR[[\Gamma_\cK]]^\times$. Replacing $\Gamma_\cK$ by  $\Gamma_\cK^{\rm ac}$ (resp. $\Gamma_\cK^{\rm cyc}$), we define the $G_\cK$-module $\mathbf{T}^{\rm ac}$ (resp. $\mathbf{T}^{\rm cyc}$) similarly.

As in \cite{howard-invmath}, we also define the critical twist
\begin{equation}\label{eq:dagger}
T_{\F}^\dagger:=T_{\F}\otimes\Theta^{-1},
\end{equation}
where $\Theta:G_\bQ\rightarrow\cR^\times$ is the character (\ref{def:crit-char}), and define its deformations $\mathbf{T}^\dagger, \mathbf{T}^{\dagger,\ac}$, and $\mathbf{T}^{\dagger,{\rm cyc}}$ similarly as before. 

In the definitions that follow, we let $M$ denote either of the Galois modules just introduced,  
for which we naturally define $\mathscr{F}^\pm{M}$ using $(\ref{eq:Gr-f})$. 

Consider the \emph{$p$-relaxed} Selmer group defined by
\[
{\rm Sel}^{\{p\}}(F,M)={\rm ker}\Biggl\{\rH^1(\mathfrak{G}_{F,\Sigma},M)
\rightarrow\prod_{v\in\Sigma,\;v\nmid p}\frac{\rH^1(F_v,M)}{\rH^1_{\rm ur}(F_v,M)}\Biggr\},\nonumber
\]
where $\rH^1_{\rm ur}(F_v,M)={\rm ker}\{\rH^1(F_v,M)\rightarrow \rH^1(F_v^{\rm ur},M)\}$ is the unramified local condition.

\begin{defn}
For $v\vert p$ and $\mathscr{L}_v\in\{\emptyset,{\tt Gr},0\}$, set
\[
\rH^1_{\mathscr{L}_v}(F_v,M):=
\left\{
\begin{array}{ll}
\rH^1(F_v,M)&\textrm{if $\mathscr{L}_v=\emptyset$,}\\[0.1cm]
{\rm ker}\{\rH^1(F_v,M)\rightarrow\rH^1(F_v^{\rm ur},\mathscr{F}^-M)\}&\textrm{if $\mathscr{L}_v={\tt Gr}$,}\\[0.1cm]
\{0\}&\textrm{if $\mathscr{L}_v=0$,}
\end{array}
\right.
\]
and for $\mathscr{L}=\{\mathscr{L}_v\}_{v\vert p}$, define
\begin{equation}\label{def:auxsel}
{\rm Sel}_{\mathscr{L}}(F,M):={\rm ker}\Biggr\{{\rm Sel}^{\{p\}}(F,M)
\rightarrow\prod_{v\vert p}
\frac{\rH^1(F_{v},M)}{\rH_{\mathscr{L}_v}^1(F_{v},M)}\Biggr\}.\nonumber
\end{equation}
\end{defn}

Thus, for example ${\rm Sel}_{0,\emptyset}(\cK,M)$ is the subspace of ${\rm Sel}^{\{p\}}(\cK,M)$ consisting of classes which satisfy no condition (resp. are locally trivial) at $\overline{\pp}$ (resp. $\pp$). For the ease of notation, we let ${\rm Sel}_{\tt Gr}(F,M)$ denote the Selmer group ${\rm Sel}_{\mathscr{L}}(F,M)$ given by $\mathscr{L}_v={\tt Gr}$ for all $v\vert p$. 

We shall also need to consider Selmer groups for the discrete module
\[
A_{\F}:={\rm Hom}_{\rm cts}(T_{\F},\mu_{p^\infty}).
\] 
To define these, recall that by Shapiro's lemma there is a canonical isomorphism
\begin{equation}\label{eq:Shapiro}
\rH^1(\cK,\mathbf{T})\simeq\varprojlim_
{\cK\subset_{\rm f}F\subset\cK_\infty}\rH^1(F,T_\F),
\end{equation}
where $F$ runs over the finite extensions of $\cK$ contained in $\cK_\infty$ and the limits is with respect to the corestriction maps. By the compatibility of $(\ref{eq:Shapiro})$ with  the local restriction maps (see e.g. \cite[\S{3.1.2}]{SU}), the Selmer groups ${\rm Sel}_\mathscr{L}(\cK,\mathbf{T})$ are defined by local conditions ${\rm H}^1_{\mathscr{L}_v}(F_v,T_{\F})\subset{\rm H}^1(F_v,T_{\F})$ for all primes $v$. Thus we may let 
\[
{\rm Sel}_{\mathscr{L}}(\cK_\infty,A_\F)\subset\varinjlim_{\cK\subset_{\rm f} F\subset\cK_\infty}\rH^1(F,A_\F) 
\]
be the submodule cut out by 
the orthogonal complements of ${\rm H}^1_{\mathscr{L}_v}(F_v,T_{\F})$ under the perfect Tate duality
\[
\rH^1(F_v,T_\F)\times\rH^1(F_v,A_\F)\rightarrow
\bQ_p/\bZ_p.
\]
This also defines the Selmer groups ${\rm Sel}_\mathscr{L}(F,A_\F)\subset\rH^1(F,A_\F)$ for any number field $F$, and we shall also consider their variants for the twisted module 
\[
A_{\F}^\dagger:={\rm Hom}_{\bZ_p}(T_{\F}^\dagger,\mu_{p^\infty}),
\] 
or their specializations. 
Finally, if $W$ denotes any of the preceding discrete modules, we set
\[
X_{\mathscr{L}}(F,W):={\rm Hom}_{\bZ_p}({\rm Sel}_{\mathscr{L}}(F,W),\bQ_p/\bZ_p),
\]
which we simply denote by $X_{\tt Gr}(F,W)$ when $\mathscr{L}_v={\tt Gr}$ for all $v\vert p$.

We now record a number of lemmas for our later use.

\begin{lem}\label{lem:eq-ranks}
Assume that $\overline{\rho}_{\F}\vert_{G_F}$ is absolutely irreducible. Then ${\rm Sel}_{\tt Gr}(F,T_{\F}^\dagger)$ and $X_{\tt Gr}(F,A_{\F}^\dagger)$ have the same $\cR$-rank.
\end{lem}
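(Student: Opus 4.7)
My plan is to derive the equality of $\cR$-ranks from the self-duality of the critical twist $T_\F^\dagger$ together with Poitou--Tate global duality, after localizing at the generic point of $\cR$ so as to reduce to a dimension count over the fraction field $F_\cR$.

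First, recall that the critical character $\Theta$ in $(\ref{def:crit-char})$ is chosen precisely so that $T_\F^\dagger$ is self-dual: the formula for the determinant of $\rho_\F$ yields a perfect $\cR$-bilinear, $G_\bQ$-equivariant pairing
\[
T_\F^\dagger \otimes_\cR T_\F^\dagger \longrightarrow \cR(1),
\]
under which the Greenberg filtration $\mathscr{F}^+T_\F^\dagger \subset T_\F^\dagger$ is self-annihilating. Consequently, local Tate duality pairs $\rH^1_{\tt Gr}(F_v, T_\F^\dagger)$ and $\rH^1_{\tt Gr}(F_v, A_\F^\dagger)$ as exact orthogonal complements at every place $v$ of $F$, and each $\rH^1_{\tt Gr}(F_v, V_\F^\dagger)$ is its own orthogonal complement (a ``middle-dimensional'' local condition) after localizing to $V_\F^\dagger := T_\F^\dagger\otimes_\cR F_\cR$.

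Next, I would apply Poitou--Tate global duality in its Iwasawa-theoretic form (e.g., via the formalism of Selmer complexes of \cite{nekovar310}) to deduce a long exact sequence of finitely generated $\cR$-modules
\[
0 \to \Sel_{\tt Gr}(F, T_\F^\dagger) \to \rH^1(\mathfrak{G}_{F,\Sigma}, T_\F^\dagger) \to \bigoplus_{v\in\Sigma}\frac{\rH^1(F_v, T_\F^\dagger)}{\rH^1_{\tt Gr}(F_v, T_\F^\dagger)} \to X_{\tt Gr}(F, A_\F^\dagger) \to \rH^2(\mathfrak{G}_{F,\Sigma}, T_\F^\dagger) \to \cdots.
\]
Tensoring with $F_\cR$, the absolute irreducibility of $\bar\rho_\F\vert_{G_F}$ forces $\rH^0(\mathfrak{G}_{F,\Sigma}, V_\F^\dagger) = 0$, and combining self-duality with global Tate duality over $F_\cR$ then also yields $\rH^2(\mathfrak{G}_{F,\Sigma}, V_\F^\dagger) = 0$. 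Feeding the middle-dimensional local balance into the truncated four-term exact sequence and invoking the global Euler--Poincar\'e characteristic formula over $F_\cR$, the resulting $F_\cR$-dimension count collapses to give
\[
{\rm rank}_\cR\,\Sel_{\tt Gr}(F, T_\F^\dagger) = \dim_{F_\cR}\Sel_{\tt Gr}(F, V_\F^\dagger) = \dim_{F_\cR}X_{\tt Gr}(F, A_\F^\dagger)\otimes_\cR F_\cR = {\rm rank}_\cR\,X_{\tt Gr}(F, A_\F^\dagger).
\]

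The main difficulty is the careful bookkeeping of local Euler--Poincar\'e contributions at archimedean places and at finite places $v\nmid p$, but these terms cancel symmetrically by the self-duality of the Selmer structure; this cancellation is precisely what is encoded in Nekov\'a\v{r}'s Poincar\'e duality for Selmer complexes, which provides a direct way to package the entire argument.
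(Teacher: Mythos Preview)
Your strategy—pass to $F_\cR$ and use the self-duality of $T_\F^\dagger$ together with Poitou--Tate/Nekov\'a\v{r} duality to force equal ranks—is sound and genuinely different from the paper's proof. The paper instead specializes at arithmetic height-one primes $\mathfrak{P}\in\mathcal{X}_a(\cR)$, uses control theorems (\cite[Prop.~12.7.13.4(i)]{nekovar310}) to compare ${\rm Sel}_{\tt Gr}(F,T_\F^\dagger)_\mathfrak{P}/\mathfrak{P}$ and $X_{\tt Gr}(F,A_\F^\dagger)_\mathfrak{P}/\mathfrak{P}$ with Selmer groups of the specialization, and then invokes \cite[Lem.~1.3.3]{howard-PhD-I} for the equality of dimensions at each such $\mathfrak{P}$. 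Your approach avoids control theorems and works directly at the family level; the paper's approach avoids verifying the Euler-characteristic bookkeeping over the non-noetherian-looking ring $F_\cR$ by reducing to classical Selmer groups over DVRs.

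That said, there is a concrete error in your argument: the claim that $\rH^2(\mathfrak{G}_{F,\Sigma},V_\F^\dagger)=0$ is false in general. At a prime $\ell\Vert N$ where $\F$ is Steinberg, the unramified quotient of $V_\F^\dagger\vert_{G_{\bQ_\ell}}$ has Frobenius eigenvalue $\pm 1$ identically over $\cR$ (this is exactly what the critical twist arranges), so at split-multiplicative primes one has $\rH^0(F_v,V_\F^\dagger)\neq 0$ over $F_\cR$, hence $\rH^2(F_v,V_\F^\dagger)\neq 0$ by local duality, and the global $\rH^2$ surjects onto this. Thus your ``truncated four-term exact sequence'' is not available, and the dimension count as written does not close.

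The fix is precisely what you gesture at in your last paragraph: work directly with Nekov\'a\v{r}'s Selmer complex $\widetilde{R\Gamma}_f(F,T_\F^\dagger)$. Over $F_\cR$ one has $\widetilde{H}^0_f=\widetilde{H}^3_f=0$ by the irreducibility hypothesis, and the self-duality of the Greenberg local conditions forces the Euler characteristic of the Selmer complex to vanish, so $\dim_{F_\cR}\widetilde{H}^1_f=\dim_{F_\cR}\widetilde{H}^2_f$. One then identifies $\widetilde{H}^1_f\otimes F_\cR$ with ${\rm Sel}_{\tt Gr}(F,T_\F^\dagger)\otimes F_\cR$ and $\widetilde{H}^2_f\otimes F_\cR$ with $X_{\tt Gr}(F,A_\F^\dagger)\otimes F_\cR$ via Nekov\'a\v{r}'s Poincar\'e duality; the possible discrepancies from local $H^0$ terms at bad primes are $\cR$-torsion (see e.g.\ \cite[Lem.~9.6.3]{nekovar310} and the identification \cite[(21)]{howard-invmath}), so they disappear after $\otimes F_\cR$. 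If you rewrite the argument in this form it goes through, but the intermediate vanishing claim should be dropped.
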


\begin{proof}
For any height one prime $\mathfrak{P}\subset\cR$, let $\cR_{\mathfrak{P}}$ be the localization of $\cR$ at $\mathfrak{P}$, and let $F_\mathfrak{P}=\cR_{\mathfrak{P}}/\mathfrak{P}$ be the residue field.  It suffices to show that for all but finitely many $\mathfrak{P}\in\mathcal{X}_a(\cR)$, the spaces ${\rm Sel}_{\tt Gr}(F,T_{\F}^\dagger)_{\mathfrak{P}}/\mathfrak{P}$ and $X_{\tt Gr}(F,A_{\F}^\dagger)_{\mathfrak{P}}/\mathfrak{P}$ have the same $F_{\mathfrak{P}}$-dimension. 

As noted in \cite[\S{12.7.5}]{nekovar310} (see also \cite[Lem.~2.1.6]{howard-invmath}), Hida's results imply that  
the localization $\cR_\mathfrak{P}$ of $\cR$ at any $\mathfrak{P}\in\mathcal{X}_a(\cR)$ is a discrete valuation ring. Let $\pi\in\cR_{\mathfrak{P}}$ be a uniformizer. From Nekov{\'a}{\v{r}}'s theory (see \cite[Prop.~12.7.13.4(i)]{nekovar310}) and the identification \cite[(21)]{howard-invmath}, 
multiplication by $\pi$  
induces natural maps
\begin{align*}
{\rm Sel}_{\tt Gr}(F,T_{\F}^\dagger)_{\mathfrak{P}}/\pi&\hookrightarrow{\rm Sel}_{\tt Gr}(F,T_{\F,\mathfrak{P}}^\dagger/\pi),\\
{\rm Sel}_{\tt Gr}(F,A_{\F,\mathfrak{P}}^\dagger[\pi])&\twoheadrightarrow{\rm Sel}_{\tt Gr}(F,A_{\F}^\dagger)_{\mathfrak{P}}[\pi]
\end{align*}
which are isomorphisms for all but finitely many $\mathfrak{P}\in\mathcal{X}_a(\cR)$.   
Since by \cite[Lem.~1.3.3]{howard-PhD-I} the spaces ${\rm Sel}_{\tt Gr}(F,T_{\F,\mathfrak{P}}^\dagger/\pi)$ and ${\rm Sel}_{\tt Gr}(F,A_{\F,\mathfrak{P}}^\dagger[\pi])$ have the same $F_\mathfrak{P}$-dimension, the result follows.
\end{proof}

\begin{lem}\label{lem:no-tors}
If $\bar{\rho}_{\F}\vert_{G_\cK}$ is irreducible, then the modules
$\rH^1(\mathfrak{G}_{\cK,\Sigma},\bfT^\dagger)$ and $\rH^1(\mathfrak{G}_{\cK,\Sigma},\Tc)$ are torsion-free
over $\cR_{}[[\Gamma_\cK]]$ and $\cR_{}[[\Gamma^\ac_\cK]]$, respectively.
\end{lem}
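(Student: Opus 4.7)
The plan is to reduce torsion-freeness of $\rH^1$ to the vanishing of $\rH^0$ of a certain quotient, and then exploit the irreducibility of $\bar\rho_\F$. I describe the argument for $\rH^1(\mathfrak{G}_{\cK,\Sigma},\bfT^\dagger)$ over $\Lambda := \cR[[\Gamma_\cK]]$; the argument for $\rH^1(\mathfrak{G}_{\cK,\Sigma},\Tc)$ over $\cR[[\Gamma_\cK^\ac]]$ will be the verbatim same. First, $\Lambda$ is a regular local ring (a power series ring in two variables over the regular local $\cR$), hence a domain, and $\bfT^\dagger = T_\F^\dagger \otimes_\cR \Lambda$ is free of rank $2$ over $\Lambda$ thanks to the structural results on $T_\F$ recalled in $\S\ref{sec:selmer}$. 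Therefore, multiplication by any nonzero $f \in \Lambda$ is injective on $\bfT^\dagger$, and the long exact sequence associated to
$$0 \longrightarrow \bfT^\dagger \xrightarrow{\;f\;} \bfT^\dagger \longrightarrow \bfT^\dagger/f\bfT^\dagger \longrightarrow 0$$
exhibits the $f$-torsion in $\rH^1(\mathfrak{G}_{\cK,\Sigma},\bfT^\dagger)$ as a quotient of $\rH^0(\mathfrak{G}_{\cK,\Sigma},\bfT^\dagger/f\bfT^\dagger)$. Thus it suffices to show that this $\rH^0$ vanishes for every nonzero $f$.

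Suppose for contradiction that $v \in \bfT^\dagger/f\bfT^\dagger$ is a nonzero $G_\cK$-fixed element. Setting $R := \Lambda/(f)$, the cyclic $R$-submodule $Rv$ is $G_\cK$-stable. Since $\Lambda$ is complete local with maximal ideal $\mathfrak{m}$ and residue field $\kappa_\cR$, Nakayama's lemma applied to the finitely generated nonzero $R$-module $Rv$ guarantees $Rv/\mathfrak{m}Rv \neq 0$. The image of $v$ is then a nonzero $G_\cK$-fixed vector in
$$(\bfT^\dagger/f\bfT^\dagger) \otimes_\Lambda \kappa_\cR \;=\; \bfT^\dagger \otimes_\Lambda \kappa_\cR \;\simeq\; \bar\rho_\F|_{G_\cK} \otimes \overline{\Theta^{-1}},$$
which is a twist of $\bar\rho_\F|_{G_\cK}$ by a character of $G_\bQ$. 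By hypothesis $\bar\rho_\F|_{G_\cK}$ is irreducible, so this twist is also $2$-dimensional irreducible and admits no nonzero $G_\cK$-fixed vector, as such a vector would span a proper nonzero $G_\cK$-stable line. This contradiction proves the claim.

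For $\rH^1(\mathfrak{G}_{\cK,\Sigma},\Tc)$ the same argument applies with $\Lambda$ replaced by $\cR[[\Gamma_\cK^\ac]]$, which is still a regular local domain with residue field $\kappa_\cR$. I do not anticipate any essential obstacle here: the only mildly delicate point is making sure that reduction modulo the maximal ideal of $R$ factors all the way through to $\kappa_\cR$, but this is automatic because $\Lambda$ and $\cR[[\Gamma_\cK^\ac]]$ are both complete local rings with a unique residue field, so any maximal ideal of the quotient $R$ pulls back to the unique maximal ideal of $\Lambda$ (respectively $\cR[[\Gamma_\cK^\ac]]$).
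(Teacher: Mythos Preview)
Your approach is the same strategy the paper invokes (citing Perrin-Riou): reduce torsion-freeness of $\rH^1$ to the vanishing of $\rH^0$ with residual coefficients and conclude via irreducibility of $\bar\rho_\F\vert_{G_\cK}$. However, there is a gap in your Nakayama step. From $Rv/\mathfrak{m}Rv\neq 0$ you conclude that the image of $v$ in $(\bfT^\dagger/f\bfT^\dagger)\otimes_\Lambda\kappa_\cR$ is nonzero, but this does not follow: the inclusion $Rv\hookrightarrow\bfT^\dagger/f\bfT^\dagger$ need not remain injective after applying $-\otimes_\Lambda\kappa_\cR$. Nothing prevents $v\in\mathfrak{m}\cdot(\bfT^\dagger/f\bfT^\dagger)$ while $v\notin\mathfrak{m}\cdot Rv$; for instance, with $\Lambda=\kappa_\cR[[x,y]]$, $f=xy$, and $v=(\bar{x},0)\in(\Lambda/f)^2$, one has $v\neq 0$ and $Rv/\mathfrak{m}Rv\neq 0$, yet $v\mapsto 0$ in $\kappa_\cR^2$.

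The fix is standard. Since $\bfT^\dagger/f\bfT^\dagger$ is finitely generated over the Noetherian local ring $\Lambda$, the Krull intersection theorem gives $\bigcap_n\mathfrak{m}^n(\bfT^\dagger/f\bfT^\dagger)=0$, so choose $n$ maximal with $v\in\mathfrak{m}^n(\bfT^\dagger/f\bfT^\dagger)$. The image of $v$ in the graded piece $\mathfrak{m}^n(\bfT^\dagger/f\bfT^\dagger)/\mathfrak{m}^{n+1}(\bfT^\dagger/f\bfT^\dagger)$ is then nonzero and $G_\cK$-fixed. Because $\bfT^\dagger$ is free over $\Lambda$, this graded piece is a quotient of $(\mathfrak{m}^n/\mathfrak{m}^{n+1})\otimes_{\kappa_\cR}(\bfT^\dagger/\mathfrak{m}\bfT^\dagger)$ by a subspace of the form $W\otimes_{\kappa_\cR}(\bfT^\dagger/\mathfrak{m}\bfT^\dagger)$, hence is itself a direct sum of copies of $\bar\rho_\F\vert_{G_\cK}$, and your irreducibility argument then applies verbatim.
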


\begin{proof}
This follows immediately from  \cite[\S{1.3.3}]{PR:Lp}, since  $\rH^0(\cK_\infty,\bar{\rho}_\F)=\rH^0(\cK_\infty^\ac,\bar{\rho}_\F)=\{0\}$ by the irreducibility of $\bar{\rho}_\F\vert_{G_\cK}$.
\end{proof}

\begin{lem}\label{lem:str-rel}
We have ${\rm rank}_{\cR[[\Gamma_\cK^\ac]]}(X_{{\tt Gr},\emptyset}(\cK_\infty^\ac,A_\F^\dagger))=1+{\rm rank}_{\cR[[\Gamma_\cK^\ac]]}(X_{{\tt Gr},0}(\cK_\infty^\ac,A_\F^{\dagger}))$. Moreover, if $\cR$ is regular then
\[
{\rm Char}_{\cR[[\Gamma_\cK^\ac]]}(X_{{\tt Gr},\emptyset}(\cK_\infty^\ac,A_\F^{\dagger})_{\rm tors})={\rm Char}_{\cR[[\Gamma_\cK^\ac]]}(X_{0,{\tt Gr}}(\cK_\infty^\ac,A_\F^{\dagger})_{\rm tors}),
\]
where the subscript {\rm tors} denotes the $\cR[[\Gamma_\cK^\ac]]$-torsion submodule.
\end{lem}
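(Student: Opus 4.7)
The plan is to derive both claims from the Poitou--Tate 9-term global duality exact sequence at $\bar{\pp}$, together with Lemma~\ref{lem:eq-ranks} and the standard fact (from Kato's work, as noted in the Introduction) that $X_{\tt Gr}(\cK^\ac_\infty,A_\F^\dagger)$ is $\Lambda$-torsion, where $\Lambda:=\cR[[\Gamma_\cK^\ac]]$.

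For the rank equality, I would first apply Poitou--Tate to $({\tt Gr},{\tt Gr})\subset({\tt Gr},\emptyset)$ on $T_\F^\dagger$, obtaining
\[
0\to{\rm Sel}_{\tt Gr}(T_\F^\dagger)\to{\rm Sel}_{{\tt Gr},\emptyset}(T_\F^\dagger)\to(\rH^1/\rH^1_{\tt Gr})(\cK^\ac_{\infty,\bar\pp},T_\F^\dagger)\to X_{\tt Gr}(A_\F^\dagger)\to X_{{\tt Gr},0}(A_\F^\dagger)\to 0.
\]
By the local Euler--Poincar\'e formula and the vanishing of the local $\rH^0,\rH^2$ over $\Lambda$ (as in the proof of Lemma~\ref{lem:no-tors}, using irreducibility of $\bar\rho_\F|_{G_\cK}$), the middle term has $\Lambda$-rank $1$. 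Since ${\rm Sel}_{\tt Gr}(T_\F^\dagger)$ is $\Lambda$-torsion by Lemma~\ref{lem:eq-ranks}, and $X_{{\tt Gr},0}(A_\F^\dagger)$ is torsion as a quotient of $X_{\tt Gr}$, the alternating sum of ranks forces ${\rm rank}_\Lambda{\rm Sel}_{{\tt Gr},\emptyset}(T_\F^\dagger)=1$. I would then run the parallel Poitou--Tate sequence for $({\tt Gr},0)\subset({\tt Gr},\emptyset)$,
\[
0\to{\rm Sel}_{{\tt Gr},0}(T_\F^\dagger)\to{\rm Sel}_{{\tt Gr},\emptyset}(T_\F^\dagger)\to\rH^1(\cK^\ac_{\infty,\bar\pp},T_\F^\dagger)\to X_{{\tt Gr},\emptyset}(A_\F^\dagger)\to X_{{\tt Gr},0}(A_\F^\dagger)\to 0,
\]
whose middle term has $\Lambda$-rank $2$; since ${\rm Sel}_{{\tt Gr},0}(T_\F^\dagger)\subset{\rm Sel}_{\tt Gr}(T_\F^\dagger)$ is torsion, the alternating rank sum gives ${\rm rank}_\Lambda X_{{\tt Gr},\emptyset}(A_\F^\dagger)=1+{\rm rank}_\Lambda X_{{\tt Gr},0}(A_\F^\dagger)$. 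Finally, complex conjugation swaps $\pp\leftrightarrow\bar\pp$ and induces an involution $\iota$ on $\Lambda$ under which $X_{{\tt Gr},0}(A_\F^\dagger)^\iota\cong X_{0,{\tt Gr}}(A_\F^\dagger)$; since $\iota$ preserves $\Lambda$-ranks, this yields the claimed identity.

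For the characteristic ideal equality under the additional hypothesis that $\cR$ is regular (so $\Lambda$ is a regular local Krull domain and characteristic divisors are well-behaved modulo pseudo-null), I would invoke the Iwasawa-theoretic Cassels--Tate pairing for the self-dual $T_\F^\dagger$, in the framework of Nekov\'a\v{r}'s Selmer complexes, yielding ${\rm Char}_\Lambda(X_{\mathscr{L}^\perp}(A_\F^\dagger)_{\rm tors})=\iota({\rm Char}_\Lambda(X_{\mathscr{L}}(A_\F^\dagger)_{\rm tors}))$ for any local conditions $\mathscr{L}$. Applied to $\mathscr{L}=({\tt Gr},\emptyset)$, with $\mathscr{L}^\perp=({\tt Gr},0)$, this yields ${\rm Char}_\Lambda(X_{{\tt Gr},0,\rm tors})=\iota({\rm Char}_\Lambda(X_{{\tt Gr},\emptyset,\rm tors}))$. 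Combining with the $\iota$-identification ${\rm Char}_\Lambda(X_{0,{\tt Gr},\rm tors})=\iota({\rm Char}_\Lambda(X_{{\tt Gr},0,\rm tors}))$ from complex conjugation and using $\iota^2={\rm id}$, we conclude ${\rm Char}_\Lambda(X_{{\tt Gr},\emptyset,\rm tors})={\rm Char}_\Lambda(X_{0,{\tt Gr},\rm tors})$, as desired.

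The hardest step will be the careful application of the Iwasawa-theoretic Cassels--Tate pairing in the presence of the mixed, non-self-dual local conditions at $\pp$ and $\bar\pp$; this requires a substantive use of Nekov\'a\v{r}'s Selmer complex formalism to generalize the classical Cassels--Tate duality to the present Iwasawa-theoretic setting with arbitrary local conditions at the primes above $p$.
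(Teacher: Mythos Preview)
Your rank argument has a genuine error. You invoke Kato to assert that $X_{\tt Gr}(\cK_\infty^\ac,A_\F^\dagger)$ is $\Lambda$-torsion, but what the Introduction records is that $X_{\tt Gr}(\cK_\infty,A_\F)$ is torsion over the \emph{three-variable} ring $\cR[[\Gamma_\cK]]$; this says nothing about torsionness after projecting to the anticyclotomic line. In fact, in the situation the paper cares about (hypothesis~(\ref{ass:gen-H})), Theorem~\ref{thm:HP-MC} shows $X_{\tt Gr}(\cK_\infty^\ac,A_\F^\dagger)$ has $\cR[[\Gamma_\cK^\ac]]$-rank \emph{one}. Hence your claim that ${\rm Sel}_{\tt Gr}(\cK,\Tc)$ and $X_{{\tt Gr},0}(\cK_\infty^\ac,A_\F^\dagger)$ are torsion fails, and with it the computation ${\rm rank}_\Lambda{\rm Sel}_{{\tt Gr},\emptyset}=1$.

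The repair is to keep only your second Poitou--Tate sequence and replace the torsion input by the analogue of Lemma~\ref{lem:eq-ranks} for the conditions $({\tt Gr},0)$ and $({\tt Gr},\emptyset)$ separately: the same control argument at arithmetic primes, together with \cite[Lem.~1.3.3]{howard-PhD-I}, gives ${\rm rank}_\Lambda{\rm Sel}_{\mathscr{L}}(\cK,\Tc)={\rm rank}_\Lambda X_{\mathscr{L}}(\cK_\infty^\ac,A_\F^\dagger)$ for \emph{each} choice $\mathscr{L}\in\{({\tt Gr},0),({\tt Gr},\emptyset)\}$. Plugging these two equalities into the alternating sum of ranks in your sequence
\[
0\to{\rm Sel}_{{\tt Gr},0}\to{\rm Sel}_{{\tt Gr},\emptyset}\to\rH^1(\cK_{\bar\pp},\Tc)\to X_{{\tt Gr},\emptyset}\to X_{{\tt Gr},0}\to 0
\]
forces $r(X_{{\tt Gr},\emptyset})=1+r(X_{{\tt Gr},0})$ unconditionally. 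This is exactly what the paper does: its reference to \cite[Lem.~2.3(2)]{cas-BF} packages precisely this computation. (Your complex-conjugation step is not needed for the rank claim, since the statement already compares $X_{{\tt Gr},\emptyset}$ with $X_{{\tt Gr},0}$, not $X_{0,{\tt Gr}}$.)

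Your argument for the characteristic ideals, combining Nekov\'a\v{r}'s duality for the pair $({\tt Gr},\emptyset)\leftrightarrow({\tt Gr},0)$ with the complex-conjugation swap $({\tt Gr},0)\leftrightarrow(0,{\tt Gr})$, is correct and is the content behind the references the paper cites (\cite[Lem.~2.3(3)]{cas-BF}, \cite[Lem.~6.18]{Fouquet}, \cite[Prop.~3.16]{BL-ord}).
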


\begin{proof}
The first claim follows from an argument similar to that in Lemma~\ref{lem:eq-ranks} using part (2) of \cite[Lem.~2.3]{cas-BF}. For the second, note that the regularity of $\cR$ implies that of $\cR[[\Gamma_\cK^\ac]]$. Thus by \cite[Lem.~6.18]{Fouquet} the second claim follows from part (3) of \cite[Lem.~2.3]{cas-BF} (see also \cite[Prop.~3.16]{BL-ord}).
\end{proof}

We conclude this section with the following useful commutative algebra lemma from \cite{SU}, which will be used repeatedly in the proof of our main results in $\S\ref{sec:main}$.

\begin{lem}\label{lem:3.2}
Let $R$ be a local ring and $\mathfrak{a}\subset R$ a proper ideal such that $R/\mathfrak{a}$ is a domain. Let $I\subset R$ be an ideal and $\mathcal{L}$ an element of $R$ with $I\subset (\mathcal{L})$. Denote by a `bar' the image under the reduction map $R\rightarrow R/\mathfrak{a}$. If $\overline{\mathcal{L}}\in R/\mathfrak{a}$ is nonzero and $\overline{\mathcal{L}}\in\overline{I}$, then $I=(\mathcal{L})$.
\end{lem}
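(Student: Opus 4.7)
The plan is to reduce the desired equality $I=(\mathcal{L})$ to showing the reverse containment $\mathcal{L}\in I$, and then to extract $\mathcal{L}$ as a unit multiple of an element of $I$ by combining the domain hypothesis on $R/\mathfrak{a}$ with the locality of $R$.

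Since we are given $I\subset(\mathcal{L})$, the only thing to check is $\mathcal{L}\in I$. The hypothesis $\overline{\mathcal{L}}\in\overline{I}$ furnishes an element $x\in I$ with $\mathcal{L}-x\in\mathfrak{a}$. Using the other hypothesis $I\subset(\mathcal{L})$, I can write $x=s\mathcal{L}$ for some $s\in R$, so that $\mathcal{L}(1-s)\in\mathfrak{a}$. Passing to $R/\mathfrak{a}$, this reads $\overline{\mathcal{L}}\cdot(1-\overline{s})=0$ in the domain $R/\mathfrak{a}$.

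Here the nonvanishing hypothesis $\overline{\mathcal{L}}\neq 0$ becomes decisive: the domain property forces $1-\overline{s}=0$, i.e., $s\in 1+\mathfrak{a}$. Since $R$ is local and $\mathfrak{a}$ is a proper ideal, $\mathfrak{a}$ is contained in the maximal ideal $\mathfrak{m}_R$, so $s\in 1+\mathfrak{m}_R\subset R^\times$ is a unit. Consequently $\mathcal{L}=s^{-1}x\in I$, which combined with $I\subset(\mathcal{L})$ gives $I=(\mathcal{L})$.

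There is no real obstacle here; the statement is purely a piece of commutative algebra and the argument is essentially a two-line manipulation. The only subtle points are making sure that (i) the domain hypothesis is used to cancel $\overline{\mathcal{L}}$, and (ii) the locality of $R$ together with properness of $\mathfrak{a}$ is what promotes $s\in 1+\mathfrak{a}$ to a unit. Both are standard and require no further input.
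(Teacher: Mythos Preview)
Your argument is correct and complete. The paper itself does not spell out a proof but simply cites \cite[Lem.~3.2]{SU}; your write-up is precisely the standard two-step verification one finds there (in the special case of a single quotient), so there is nothing to add.
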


\begin{proof}
This is a special case of \cite[Lem.~3.2]{SU}.
\end{proof}

\subsection{Explicit reciprocity laws}

Let $G_\bQ$ act on $\Lambda_\Gamma$ via the tautological character $G_\bQ\twoheadrightarrow\Gamma\hookrightarrow\Lambda_\Gamma^\times$. In \cite{KLZ2}, Kings--Loeffler--Zerbes constructed special elements 
\[
_c\mathcal{BF}_m^{\F,\G}\in\rH^1(\bQ(\mu_m),T_{\F}\hat\otimes_{\bZ_p} T_{\G}\hat\otimes_{\bZ_p}\Lambda_\Gamma)
\] 
attached to pairs of Hida families $\F,\G$, and related the image of $_c\mathcal{BF}_1^{\F,\G}$ under a Perrin-Riou big logarithm map to the $p$-adic $L$-functions $L_p(\F,\G)$ and $L_p(\G,\F)$ of Theorem~\ref{thm:hida}. In this section we describe the variant of their results that we shall need.


Since $\bfT^\dagger=\bfT\otimes\Theta^{-1}$ by definition, the twist map ${\rm tw}_{\Theta^{-1}}:\cR[[\Gamma_\cK]]\rightarrow\cR[[\Gamma_\cK]]$ of $(\ref{eq:tw-theta})$ induces a $\cR$-linear isomorphism
\[
\widetilde{\rm tw}_{\Theta^{-1}}:\rH^1(\cK,\mathbf{T})\rightarrow\rH^1(\cK,\mathbf{T}^\dagger)
\]
satisfying $\widetilde{\rm tw}_{\Theta^{-1}}(\lambda x)={\rm tw}_{\Theta^{-1}}(\lambda)\widetilde{\rm tw}_{\Theta^{-1}}(x)$ for all $\lambda\in\cR[[\Gamma_\cK]]$.

\begin{thm}[Kings--Loeffler--Zerbes]
\label{thm:Col}
There exists a class $\mathcal{BF}_{}^\dagger\in{\rm Sel}_{{\tt Gr},\emptyset}(\cK,\mathbf{T}^\dagger)$ and $\cR[[\Gamma_\cK]]$-linear injections with pseudo-null cokernel
\begin{align*}
{\rm Col}^{(1),\dagger}:\rH^1(\cK_{\ppbar},\mathscr{F}^{-}\mathbf{T}^\dagger)
\;&\rightarrow\;I_\F\otimes_{\cR}\cR[[\Gamma_\cK]],\\
{\rm Col}^{(2),\dagger}:\rH^1(\cK_{\pp},\mathscr{F}^{+}\mathbf{T}^\dagger)
\;&\rightarrow\;I_\G\hat\otimes_{\bZ_p}\cR[[\Gamma_\cK]],
\end{align*}
where $\G$ is the CM Hida family in $(\ref{eq:g-CM})$, such that
\begin{align*}
{\rm Col}^{(1),\dagger}({\rm loc}_{\ppbar}(\mathcal{BF}^\dagger))&={\rm tw}_{\Theta^{-1}}(L_p^{}(\F,\G))\\
{\rm Col}^{(2),\dagger}({\rm loc}_\pp(\mathcal{BF}^\dagger))&={\rm tw}_{\Theta^{-1}}(L_p^{}(\G,\F)).
\end{align*}
In particular, for every prime $v$ of $\cK$ above $p$, the class ${\rm loc}_v(\mathcal{BF}^\dagger)\in\rH^1(\cK_v,\mathbf{T}^\dagger)$ is non-torsion over $\cR[[\Gamma_\cK]]$.
\end{thm}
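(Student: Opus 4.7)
The strategy is to pull back and twist the three-variable Beilinson--Flach class $_{c}\mathcal{BF}_1^{\F,\G}\in\rH^1(\bQ,T_\F\hat\otimes_{\bZ_p}T_\G\hat\otimes_{\bZ_p}\Lambda_\Gamma)$ of \cite{KLZ2}, with $\G$ the CM Hida family of \eqref{eq:g-CM}. Because $\G$ has CM by $\cK$ (with $\pp$ the prime of ordinarity), there is a canonical $G_\bQ$-equivariant identification of $T_\G$ with an induction from $G_\cK$ of a character built from $\theta_\pp$; combined with Shapiro's lemma and the decomposition $\Gamma_\cK\simeq\Gamma_\pp\times\Gamma_\cK^{\rm cyc}$ (with $\Gamma_\pp$ supplied by $\G$ and $\Gamma_\cK^{\rm cyc}$ identified with the maximal torsion-free quotient of $\Gamma$), this identifies the above ambient cohomology group with a subgroup of $\rH^1(\cK,\mathbf{T})$. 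Applying the $\cR$-linear isomorphism $\widetilde{\rm tw}_{\Theta^{-1}}$ yields the class $\mathcal{BF}^\dagger\in\rH^1(\cK,\mathbf{T}^\dagger)$. That it lies in ${\rm Sel}_{{\tt Gr},\emptyset}(\cK,\mathbf{T}^\dagger)$ reflects two standard facts about Beilinson--Flach classes: unramifiedness at primes $\ell\nmid p$ from their \'etale-motivic origin, and the fact that at the split prime $\pp$, where $\G$ is unramified and ordinary, the image falls automatically into the sub-filtration $\mathscr{F}^{+}\mathbf{T}^\dagger$ --- precisely the Greenberg local condition --- while no constraint is imposed at $\overline{\pp}$.

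For the two Coleman maps, both targets $\mathscr{F}^{-}\mathbf{T}^\dagger$ at $\overline{\pp}$ and $\mathscr{F}^{+}\mathbf{T}^\dagger$ at $\pp$ are unramified rank-one $\cR$-modules on which the arithmetic Frobenius acts by an explicit $\boldsymbol{a}_p$-type eigenvalue, so Perrin-Riou's big logarithm specialized to such a representation provides a canonical $\cR[[\Gamma_\cK]]$-linear map to the indicated Iwasawa algebra; the injectivity with pseudo-null cokernel is standard Perrin-Riou theory. The congruence-ideal factors $I_\F$, $I_\G$ appear in the codomains because the normalization of Perrin-Riou's map involves a period that, as explained in \cite[\S{7}]{KLZ2}, is canonically captured by the congruence ideal of the relevant Hida family. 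The claimed identities
\[
{\rm Col}^{(1),\dagger}({\rm loc}_{\ppbar}(\mathcal{BF}^\dagger))={\rm tw}_{\Theta^{-1}}(L_p(\F,\G)),\qquad
{\rm Col}^{(2),\dagger}({\rm loc}_{\pp}(\mathcal{BF}^\dagger))={\rm tw}_{\Theta^{-1}}(L_p(\G,\F))
\]
are then the two explicit reciprocity laws of Kings--Loeffler--Zerbes --- obtained by taking $\F$, respectively $\G$, as the ``dominant'' Hida family --- transported to the present Iwasawa setting and precomposed with $\widetilde{\rm tw}_{\Theta^{-1}}$.

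The non-torsion conclusion is then immediate: both $L_p(\F,\G)$ and $L_p(\G,\F)$ are nonzero in their respective coefficient rings since by Theorem~\ref{thm:hida} their interpolation properties produce nonzero critical Rankin--Selberg values at a Zariski-dense set of arithmetic points, and the injectivity of ${\rm Col}^{(i),\dagger}$ forces ${\rm loc}_v(\mathcal{BF}^\dagger)$ to be non-torsion for $v\in\{\pp,\overline{\pp}\}$. The principal technical obstacle is the coordinate bookkeeping in the first step: one must carefully match the two-variable algebra $\cR[[\Gamma_\cK]]$ with the ambient three-variable $\cR_\F\hat\otimes_{\bZ_p}\cR_\G\hat\otimes_{\bZ_p}\Lambda_\Gamma$ in which the KLZ class originally lives, verify that Shapiro's lemma carries the KLZ Greenberg condition at $p$ over $\bQ$ to the conditions $({\tt Gr},\emptyset)$ at the two primes above $p$ in $\cK$, and confirm that the twist by $\Theta^{-1}$ is absorbed by the Perrin-Riou maps so as to recover precisely the \emph{twisted} Rankin $p$-adic $L$-functions on the right-hand side.
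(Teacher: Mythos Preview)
Your proposal is correct and follows essentially the same approach as the paper: construct $\mathcal{BF}^\dagger$ by taking the KLZ Beilinson--Flach class for the pair $(\F,\G)$, identifying $T_\G\simeq{\rm Ind}_\cK^\bQ\bZ_p[[\Gamma_\pp]]$ and applying Shapiro's lemma to land in $\rH^1(\cK,\mathbf{T})$, then twisting by $\Theta^{-1}$; the Selmer inclusion comes from \cite[Prop.~8.1.7]{KLZ2}, the Coleman maps are the Perrin-Riou big logarithms paired with $\eta_\F\otimes\omega_\G$ and $\eta_\G\otimes\omega_\F$, the reciprocity identities are \cite[Thm.~10.2.2]{KLZ2}, and the non-torsion statement follows from nonvanishing of the two Rankin $p$-adic $L$-functions. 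The paper cites \cite[Thm.~2.4]{cas-BF} and \cite{BST} for the coordinate-matching and induction identification you flag as the main technical point, but the argument is otherwise exactly as you outline.
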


\begin{proof}
This follows from the results of \cite{KLZ2}, as explained in  \cite[Thm.~2.4]{cas-BF}, and to which one needs to add some of the analysis in \cite{BST}.  
Indeed, taking $m=1$ in \cite[Def.~8.1.1]{KLZ2} (and using \cite[Lem.~6.8.9]{LLZ} to dispense with an auxiliary $c>1$ needed for the construction), one obtains a cohomology class 
\begin{equation}\label{eq:KLZ-class}
\mathcal{BF}^{\F,\G}\in\rH^1(\bQ,T_\F\hat\otimes_{\bZ_p}T_\G\hat\otimes_{\bZ_p}\Lambda_\Gamma) \nonumber
\end{equation}
attached to our fixed Hida family $\F$ and a second Hida family $\G$. 
Taking for $\G$ the canonical CM family in (\ref{eq:g-CM}), by \cite{BST} we have an isomorphism
\[
T_\G\simeq{\rm Ind}_\cK^\bQ\bZ_p[[\Gamma_\pp]]
\] 
as $G_\bQ$-modules, 
where the $G_\cK$-action on $\bZ_p[[\Gamma_\pp]]$ is given by the tautological character $G_\cK\twoheadrightarrow\Gamma_\pp\hookrightarrow\bZ_p[[\Gamma_\pp]]^\times$. By Shapiro's lemma, the class $\mathcal{BF}^{\F,\G}$ therefore defines a class $\mathcal{BF}\in\rH^1(\cK,\mathbf{T})$ whose image under $\widetilde{\rm tw}_{\Theta^{-1}}$ will be our required $\mathcal{BF}^\dagger$.

Indeed, the inclusion $\mathcal{BF}^\dagger\in {\rm Sel}_{{\tt Gr},\emptyset}(\cK,\bfT^\dagger)$ follows from \cite[Prop.~8.1.7]{KLZ2}, 
and by the explicit reciprocity law of \cite[Thm.~10.2.2]{KLZ2}, the maps
\[
{\rm Col}^{(1)}:=\langle\mathcal{L}(-),\eta_{\F}\otimes\omega_{\G}\rangle,\quad
{\rm Col}^{(2)}:=\left\langle\mathcal{L}(-),\eta_{\G}\otimes\omega_{\F}\right\rangle
\]
described in the proof of \cite[Thm.~2.4]{cas-BF} send
the restriction at $\overline{\pp}$ and $\pp$ of
$\mathcal{BF}$ to the $p$-adic $L$-functions $L_p(\F,\G)$ and $L_p^{}(\G,\F)$, respectively. Thus letting ${\rm Col}^{(1),\dagger}$ and ${\rm Col}^{(2),\dagger}$ be the $\cR[[\Gamma_\cK]]$-linear maps defined by the commutative diagrams
\[
\xymatrix{
\rH^1(\cK_{\ppbar},\mathscr{F}^{-}\mathbf{T})\ar[r]^-{{\rm Col}^{(1)}}\ar[d]^-{\widetilde{\rm tw}_{\Theta^{-1}}}&I_\F\otimes_{\cR}\cR[[\Gamma_\cK]]\ar[d]^-{{\rm tw}_{\Theta^{-1}}}&
\rH^1(\cK_{\pp},\mathscr{F}^{+}\mathbf{T})\ar[r]^-{{\rm Col}^{(2)}}\ar[d]^-{\widetilde{\rm tw}_{\Theta^{-1}}}&I_\G\hat\otimes_{\bZ_p}\cR[[\Gamma_\cK]]\ar[d]^-{{\rm tw}_{\Theta^{-1}}}\\
\rH^1(\cK_{\ppbar},\mathscr{F}^{-}\mathbf{T}^\dagger)\ar[r]^-{{\rm Col}^{(1),\dagger}}&I_\F\otimes_{\cR}\cR[[\Gamma_\cK]]&
\rH^1(\cK_{\pp},\mathscr{F}^{+}\mathbf{T}^\dagger)\ar[r]^-{{\rm Col}^{(2),\dagger}}&I_\G\hat\otimes_{\bZ_p}\cR[[\Gamma_\cK]],
}
\]
the result follows, with the last claim being an immediate consequence of the nonvanishing of the $p$-adic $L$-functions $L_p(\F,\G)$ and $L_p(\G,\F)$ (see e.g.  \cite[Rem.~1.3]{cas-BF}).
\end{proof}	
		
We shall also need to consider anticyclotomic variants  of the maps ${\rm Col}^{(i),\dagger}$ in Theorem~\ref{thm:Col}. Letting $\mathcal{I}_{\rm cyc}$ be the kernel of the natural projection $\cR[[\Gamma_\cK]]\rightarrow\cR[[\Gamma_{\cK}^\ac]]$, the map
\[
{\rm Col}_\ac^{(1),\dagger}:\rH^1(\cK_{\ppbar},\mathscr{F}^-\bfT^{\dagger,\ac})\rightarrow I_\F\otimes_\cR\cR[[\Gamma_\cK^\ac]]
\]
is defined by reducing ${\rm Col}^{(1),\dagger}$ modulo the ideal $\mathcal{I}_{\rm cyc}$, using the fact that by the vanishing of $\rH^0(\cK_{\ppbar},\mathscr{F}^-\bfT^{\dagger,\ac})$ the restriction map induces a natural isomorphism
\[
\rH^1(\cK_{\ppbar},\mathscr{F}^-\bfT^{\dagger})/\mathcal{I}_{\rm cyc}\simeq\rH^1(\cK_{\ppbar},\mathscr{F}^-\bfT^{\dagger,\ac}).
\] 
The map ${\rm Col}_\ac^{(2),\dagger}:\rH^1(\cK_{\pp},\mathscr{F}^+\bfT^{\dagger,\ac})\rightarrow I_\G\hat\otimes_{\bZ_p}\cR[[\Gamma_\cK^\ac]]$ is defined in the same manner. 

Note that since the maps ${\rm Col}^{(i),\dagger}$ are injective with pseudo-null cokernel, the same is true for the maps ${\rm Col}_\ac^{(i),\dagger}$.

\begin{cor}\label{cor:str-Gr}
Let $\mathcal{BF}^{\dagger,\ac}$ be the image of the class $\mathcal{BF}^\dagger$ under the natural map $\rH^1(\cK,\mathbf{T}^\dagger)\rightarrow\rH^1(\cK,\mathbf{T}^{\dagger,\ac})$. 
Assume that $\cK$ satisfies the hypothesis {\rm (\ref{eq:gen-Heeg-f})}. Then we have the inclusion
\[
{\rm loc}_{\overline{\pp}}(\mathcal{BF}_{}^{\dagger,\ac})\in{\rm ker}\{\rH^1(\cK_{\overline\pp},\Tc)\rightarrow\rH^1(\cK_{\overline{\pp}},\mathscr{F}^-\Tc)\};
\] 
in particular, $\mathcal{BF}_{}^{\dagger,\ac}\in{\rm Sel}_{\tt Gr}(\cK,\Tc)$. Moreover, if we assume in addition that $N$ is squarefree when $N^->1$, then ${\rm loc}_{\pp}(\mathcal{BF}_{}^{\dagger,\ac})$ is non-torsion over $\cR[[\Gamma_\cK^\ac]]$.
\end{cor}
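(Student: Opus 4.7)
The plan is to derive both claims from the explicit reciprocity laws of Theorem~\ref{thm:Col}, feeding in the analytic information on the Rankin--Selberg $p$-adic $L$-functions $L_p(\F,\G)$ and $L_p(\G,\F)$ recorded in $\S\ref{sec:padicL}$. By the commutative square defining ${\rm Col}_\ac^{(1),\dagger}$ as the reduction of ${\rm Col}^{(1),\dagger}$ modulo the cyclotomic ideal $\mathcal{I}_{\rm cyc}$, the value of ${\rm Col}_\ac^{(1),\dagger}$ on the image of ${\rm loc}_{\ppbar}(\mathcal{BF}^{\dagger,\ac})$ in $\rH^1(\cK_{\ppbar},\mathscr{F}^-\Tc)$ equals the anticyclotomic projection of ${\rm tw}_{\Theta^{-1}}(L_p(\F,\G))$. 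Writing $L_p^{\tt Hi}(\F/\cK)=c_\F\cdot L_p(\F,\G)$ with $c_\F\in\cR\smallsetminus\{0\}$ and using that $\cR[[\Gamma_\cK^\ac]]$ is a domain (since $\cR$ is regular), the vanishing $L_p^{\tt Hi}(\F^\dagger/\cK)_\ac=0$ from Proposition~\ref{thm:hida-1} forces that projection to vanish. The injectivity of ${\rm Col}_\ac^{(1),\dagger}$ then yields the desired inclusion ${\rm loc}_{\ppbar}(\mathcal{BF}^{\dagger,\ac})\in\ker\{\rH^1(\cK_{\ppbar},\Tc)\to\rH^1(\cK_{\ppbar},\mathscr{F}^-\Tc)\}$; the membership $\mathcal{BF}^{\dagger,\ac}\in{\rm Sel}_{\tt Gr}(\cK,\Tc)$ follows at once, since the Gr condition at $\pp$ is inherited from $\mathcal{BF}^\dagger\in{\rm Sel}_{{\tt Gr},\emptyset}(\cK,\bfT^\dagger)$ under the natural projection $\bfT^\dagger\to\Tc$, the condition at $\ppbar$ has just been checked (and is stronger than Gr), and the unramified conditions at primes away from $p$ are preserved by the projection.

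For the non-torsion statement I apply ${\rm Col}_\ac^{(2),\dagger}$ to (the image in $\rH^1(\cK_\pp,\mathscr{F}^+\Tc)$ of) ${\rm loc}_\pp(\mathcal{BF}^{\dagger,\ac})$; by the analogous diagram the result equals the anticyclotomic projection of ${\rm tw}_{\Theta^{-1}}(L_p(\G,\F))$. Applying ${\rm tw}_{\Theta^{-1}}$ (which acts as the identity in the anticyclotomic direction) and the anticyclotomic projection to the factorization (\ref{eq:factor-hida}) of $\mathscr{L}_\pp(\F/\cK)$ yields
\[
\mathscr{L}_\pp(\F^\dagger/\cK)_\ac=\bigl(\tfrac{h_\cK}{w_\cK}\mathscr{L}_\pp(\cK)_\ac\bigr)\cdot{\rm tw}_{\Theta^{-1}}(L_p(\G,\F))_\ac
\]
in $\cR^{\rm ur}[[\Gamma_\cK^\ac]]$. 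By Corollary~\ref{cor:wan-bdp} the left-hand side agrees up to a unit in $\cR^{\rm ur}[[\Gamma_\cK^\ac]][1/p]^\times$ with $\mathscr{L}_\pp^{\tt BDP}(\F/\cK)^2$, which is nonzero by the nonvanishing input cited at the end of Theorem~\ref{thm:bdp}. Hence both factors on the right are nonzero, and in particular ${\rm tw}_{\Theta^{-1}}(L_p(\G,\F))_\ac\neq 0$. Since ${\rm Col}_\ac^{(2),\dagger}$ is injective and $\cR[[\Gamma_\cK^\ac]]$ is a domain, this forces ${\rm loc}_\pp(\mathcal{BF}^{\dagger,\ac})$ to be non-torsion over $\cR[[\Gamma_\cK^\ac]]$.

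The real subtlety lies in the second claim: it rests on the \emph{nonvanishing} of the dominant-CM $p$-adic $L$-function $\mathscr{L}_\pp^{\tt BDP}(\F/\cK)$, in contrast with the \emph{vanishing} of the non-dominant CM $p$-adic $L$-function used in the first. It is precisely this nonvanishing, accessed through Theorem~\ref{thm:bdp} and the results cited in its proof, that necessitates the additional squarefreeness assumption on $N$ when $N^->1$, whereas the first claim uses only the generalized Heegner hypothesis on $\cK$ together with the vanishing afforded by the sign in the functional equation.
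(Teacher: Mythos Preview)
Your proof is correct and follows essentially the same route as the paper's: both claims are obtained by reducing the explicit reciprocity laws of Theorem~\ref{thm:Col} modulo $\mathcal{I}_{\rm cyc}$ and feeding in, respectively, the vanishing of $L_p^{\tt Hi}(\F^\dagger/\cK)_\ac$ from Proposition~\ref{thm:hida-1} and the nonvanishing of $\mathscr{L}_\pp(\F^\dagger/\cK)_\ac$ from Corollary~\ref{cor:wan-bdp}. Your write-up simply makes explicit the cancellation of $c_\F$ and the factorization~(\ref{eq:factor-hida}) that the paper leaves implicit; note that the nonvanishing you invoke is stated in Corollary~\ref{cor:wan-bdp} rather than in Theorem~\ref{thm:bdp} itself.
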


\begin{proof}
The combination of Theorem~\ref{thm:Col} and Proposition~\ref{thm:hida-1} yields the vanishing of the image of ${\rm loc}_{\ppbar}(\mathcal{BF}^{\dagger,\ac})$ under the map ${\rm Col}_\ac^{(1),\dagger}$, so the first claim follows from its injectivity. The second claim follows from Theorem~\ref{thm:Col} together with the novanishing result of Corollary~\ref{cor:wan-bdp}.
\end{proof}

\subsection{Iwasawa main conjectures}\label{sec:ES}

We now use the reciprocity laws of Theorem~\ref{thm:Col} to relate different variants of the Iwasawa main conjecture. 



\begin{thm}\label{thm:2-varIMC}
Assume that $\overline{\rho}_{\F}\vert_{G_\cK}$ is irreducible. 
Then the following are equivalent:
\begin{enumerate}
\item[(i)]{} $X_{{\tt Gr},0}(\cK_\infty,A_\F^{\dagger})$ is $\cR[[\Gamma_\cK]]$-torsion, ${\rm Sel}_{{\tt Gr},\emptyset}(\cK,\mathbf{T}^{\dagger})$ has $\cR[[\Gamma_\cK]]$-rank one, 
and
\begin{equation}\label{eq:BF-IMC}
{\rm Char}_{\cR[[\Gamma_\cK]]}(X_{{\tt Gr},0}(\cK_\infty,A_\F^{\dagger}))=
{\rm Char}_{\cR[[\Gamma_\cK]]}\bigg(\frac{{\rm Sel}_{{\tt Gr},\emptyset}(\cK,\mathbf{T}^{\dagger})}{\cR[[\Gamma_\cK]]\cdot\mathcal{BF}^{\dagger}}\biggr)
\nonumber
\end{equation}
up to powers of $p$. 
\item[(ii)]{} Both $X_{\emptyset,0}(\cK_\infty,A_\F^{\dagger})$ and ${\rm Sel}_{0,\emptyset}(\cK,\mathbf{T}^{\dagger})$ are $\cR[[\Gamma_\cK]]$-torsion, and
\[
{\rm Char}_{\cR[[\Gamma_\cK]]}(X_{\emptyset,0}(\cK_\infty,A_\F^{\dagger}))\cdot\cR^{\rm ur}[[\Gamma_\cK]]=({\rm tw}_{\Theta^{-1}}(\mathscr{L}_\pp(\F/\cK)))
\]
up to powers of $p$. 
\item[(iii)]{} Both $X_{\tt Gr}(\cK_\infty,A_\F^{\dagger})$ and ${\rm Sel}_{\tt Gr}(\cK,\mathbf{T}^{\dagger})$ are $\cR[[\Gamma_\cK]]$-torsion, and
\[
{\rm Char}_{\cR[[\Gamma_\cK]]}(X_{\tt Gr}(\cK_\infty,A_\F^{\dagger}))=({\rm tw}_{\Theta^{-1}}(L_p^{\tt Hi}(\F/\cK))).
\]
up to powers of $p$.
\end{enumerate}
Moreover, if in addition $\cK$ satisfies the hypothesis {\rm (\ref{eq:gen-Heeg-f})}, with $N$ being squarefree when $N^->1$, then the following are equivalent:
\begin{enumerate}
\item[(i)']{} $X_{{\tt Gr},0}(\cK_\infty^\ac,A_\F^{\dagger})$ is $\cR[[\Gamma_\cK^\ac]]$-torsion, ${\rm Sel}_{{\tt Gr},\emptyset}(\cK,\mathbf{T}^{\dagger,\ac})$ has $\cR[[\Gamma_\cK^\ac]]$-rank one, 
and
\begin{equation}\label{eq:BF-IMC}
{\rm Char}_{\cR[[\Gamma_\cK^\ac]]}(X_{{\tt Gr},0}(\cK_\infty^\ac,A_\F^{\dagger}))=
{\rm Char}_{\cR[[\Gamma_\cK^\ac]]}\bigg(\frac{{\rm Sel}_{{\tt Gr},\emptyset}(\cK,\mathbf{T}^{\dagger,\ac})}{\cR[[\Gamma_\cK^\ac]]\cdot\mathcal{BF}^{\dagger,\ac}}\biggr)
\nonumber
\end{equation}
up to powers of $p$.
\item[(ii)']{} Both $X_{\emptyset,0}(\cK_\infty^\ac,A_\F^{\dagger})$ and ${\rm Sel}_{0,\emptyset}(\cK,\mathbf{T}^{\dagger,\ac})$ are $\cR[[\Gamma_\cK^\ac]]$-torsion, and
\[
{\rm Char}_{\cR[[\Gamma_\cK^\ac]]}(X_{\emptyset,0}(\cK_\infty^\ac,A_\F^{\dagger}))\cdot\cR^{\rm ur}[[\Gamma_\cK^\ac]]=(\mathscr{L}^{\tt BDP}_\pp(\F/\cK)^2)
\]
up to powers of $p$. 
\end{enumerate}
\end{thm}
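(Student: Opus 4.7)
The plan is to apply Poitou--Tate global duality to nested pairs of Selmer structures $\mathcal{L}\subset\mathcal{L}'$ that differ only at the primes above $p$, combined with the explicit reciprocity laws of Theorem~\ref{thm:Col}. The basic tool is the resulting five-term exact sequence
\begin{equation*}
0 \to {\rm Sel}_\mathcal{L}(\cK,\mathbf{T}^\dagger) \to {\rm Sel}_{\mathcal{L}'}(\cK,\mathbf{T}^\dagger) \to \prod_{v \mid p}\frac{\rH^1_{\mathcal{L}'_v}(\cK_v,\mathbf{T}^\dagger)}{\rH^1_{\mathcal{L}_v}(\cK_v,\mathbf{T}^\dagger)} \to X_{\mathcal{L}^\perp}(\cK_\infty,A_\F^\dagger) \to X_{\mathcal{L}'^\perp}(\cK_\infty,A_\F^\dagger) \to 0,
\end{equation*}
which on characteristic ideals trades between Selmer characteristics at the cost of a local cohomology defect. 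The Coleman maps ${\rm Col}^{(1),\dagger}$ (at $\overline{\pp}$) and ${\rm Col}^{(2),\dagger}$ (at $\pp$) of Theorem~\ref{thm:Col} identify the characteristic ideals of these local defects (modulo pseudo-null error, by injectivity with pseudo-null cokernel) with ideals generated by ${\rm tw}_{\Theta^{-1}}(L_p(\F,\G))$ and ${\rm tw}_{\Theta^{-1}}(L_p(\G,\F))$, respectively. Via the factorizations $L_p^{\tt Hi}(\F/\cK)=c_\F L_p(\F,\G)$ (from $\S\ref{sec:dom-CM}$) and $\mathscr{L}_\pp(\F/\cK)=\tfrac{h_\cK}{w_\cK}\mathscr{L}_\pp(\cK)_\ac L_p(\G,\F)$ (from (\ref{eq:factor-hida})), these become the ideals appearing in (iii) and (ii), respectively, after absorbing the Katz factor into $\cR^{\rm ur}[[\Gamma_\cK]]$.

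To establish the equivalences (i)$\Leftrightarrow$(ii)$\Leftrightarrow$(iii), I would chain together the Poitou--Tate sequences for pairs of Selmer structures differing at one prime at a time, starting from the pair $({\tt Gr},0)\subset({\tt Gr},\emptyset)$ (differing at $\pp$) and the pair $(0,\emptyset)\subset({\tt Gr},\emptyset)$ (differing at $\overline\pp$), and likewise comparing with $({\tt Gr},{\tt Gr})\subset({\tt Gr},\emptyset)$ and $({\tt Gr},0)\subset(\emptyset,0)$. Under the rank-one hypothesis on ${\rm Sel}_{{\tt Gr},\emptyset}$ in (i), the torsion-freeness of $\rH^1(\mathfrak{G}_{\cK,\Sigma},\mathbf{T}^\dagger)$ from Lemma~\ref{lem:no-tors} together with the non-triviality of both localizations ${\rm loc}_v(\mathcal{BF}^\dagger)$ (last clause of Theorem~\ref{thm:Col}) forces the Selmer groups with strict conditions at $p$ (such as ${\rm Sel}_{{\tt Gr},0}$ and ${\rm Sel}_{0,\emptyset}$) to vanish, collapsing the Poitou--Tate sequence into a four-term sequence. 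Evaluating the characteristic of the surviving local term via the Coleman identification and invoking Lemma~\ref{lem:3.2} upgrades the resulting ideal divisibilities to equalities modulo $p$-powers, yielding each of the implications. In the reverse direction, the torsion of $X_{\emptyset,0}$ or $X_{\tt Gr}$ combined with the nonvanishing of the relevant $p$-adic $L$-function forces ${\rm Sel}_{{\tt Gr},\emptyset}$ to have rank exactly one and identifies its generator mod torsion, up to units, with $\mathcal{BF}^\dagger$ via the Coleman injectivity.

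For the anticyclotomic equivalence (i')$\Leftrightarrow$(ii'), the same strategy applies after projection along $\cR[[\Gamma_\cK]]\to\cR[[\Gamma_\cK^\ac]]$, using the anticyclotomic variants ${\rm Col}_\ac^{(i),\dagger}$ which retain the injectivity-with-pseudo-null-cokernel property. The crucial new ingredients are: Corollary~\ref{cor:str-Gr}, which asserts that ${\rm loc}_{\overline\pp}(\mathcal{BF}^{\dagger,\ac})$ lies in $\rH^1_{\rm Gr}(\cK_{\overline\pp},\Tc)$ (so $\mathcal{BF}^{\dagger,\ac}\in{\rm Sel}_{\tt Gr}(\cK,\Tc)$) while ${\rm loc}_\pp(\mathcal{BF}^{\dagger,\ac})$ is non-torsion over $\cR[[\Gamma_\cK^\ac]]$; and Corollary~\ref{cor:wan-bdp}, identifying $\mathscr{L}_\pp(\F^\dagger/\cK)_\ac$ with $\mathscr{L}_\pp^{\tt BDP}(\F/\cK)^2$ up to units in $\cR^{\rm ur}[[\Gamma_\cK^\ac]][1/p]^\times$. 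The Poitou--Tate analysis then runs analogously to the three-variable case.

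The main obstacle I expect is the careful bookkeeping of characteristic ideals across this chain of exact sequences: the pseudo-null defects of the Coleman maps account for the "up to powers of $p$" slack in the final identities, and each application of Lemma~\ref{lem:3.2} requires verifying appropriate non-vanishing and domain hypotheses on auxiliary quotient rings. The rank comparison of Lemma~\ref{lem:eq-ranks} and the torsion comparison of Lemma~\ref{lem:str-rel} will be instrumental in controlling the various Selmer groups under changes of local conditions.
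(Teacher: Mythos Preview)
Your strategy is correct and matches the paper's: Poitou--Tate duality for nested Selmer conditions at $p$, combined with the reciprocity laws of Theorem~\ref{thm:Col}, yields the exact sequences from which the equivalences follow. Two points need correction, however. First, Lemma~\ref{lem:3.2} plays no role here: characteristic ideals are multiplicative in exact sequences of torsion modules, so once ${\rm Sel}_{0,\emptyset}(\cK,\mathbf{T}^\dagger)$ vanishes (via Lemma~\ref{lem:no-tors}) the resulting four-term sequences give \emph{equalities} of characteristic ideals directly, not divisibilities requiring an upgrade. Second, the ``up to powers of $p$'' slack does not come from the pseudo-null cokernels of the Coleman maps---pseudo-null modules have trivial characteristic ideal and contribute nothing. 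It enters rather because the congruence ideal $I_\G$ of the CM family $\G$ is generated by $\mathscr{L}_\pp(\cK)_{\ac}$ only after inverting $p$ (this is what lets one rescale ${\rm Col}^{(2),\dagger}$ to land in the ideal generated by ${\rm tw}_{\Theta^{-1}}(\mathscr{L}_\pp(\F/\cK))$), and because the comparison of Corollary~\ref{cor:wan-bdp} holds only up to units in $\cR^{\rm ur}[[\Gamma_\cK^\ac]][1/p]^\times$. As minor notes: your labeling of which prime the nested pairs differ at is reversed relative to the paper's convention (the $\pp$-condition is listed first, so $({\tt Gr},0)\subset({\tt Gr},\emptyset)$ differ at $\overline\pp$), and Lemmas~\ref{lem:eq-ranks} and~\ref{lem:str-rel} are not used in this argument.
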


\begin{proof}
Consider the exact sequence coming form Poitou--Tate duality
\begin{align*}\label{eq:ES-1b}
0\rightarrow{\rm Sel}_{0,\emptyset}(\cK,\bfT^\dagger)\rightarrow{\rm Sel}_{{\tt Gr},\emptyset}(\cK,\bfT^\dagger)\xrightarrow{{\rm loc}_\pp}
&\rH^1_{\tt Gr}(\cK_{\pp},\bfT^\dagger)\\
&\rightarrow
X_{\emptyset,0}(\cK_\infty,A_\F^\dagger)\rightarrow X_{{\tt Gr},0}(\cK_\infty,A_\F^\dagger)\rightarrow 0.\nonumber
\end{align*}
By Theorem~\ref{thm:Col}, the cokernel of the map ${\rm loc}_\pp$ is $\cR[[\Gamma_\cK]]$-torsion, and so the equivalence between the claimed ranks in (i) and (ii) follows. By Lemma~\ref{lem:no-tors}, if  ${\rm Sel}_{0,\emptyset}(\cK,\bfT^\dagger)$ is $\cR[[\Gamma_\cK]]$-torsion then it is trivial, and so the above yields
\begin{equation}\label{eq:div-1}
0\rightarrow\frac{{\rm Sel}_{{\tt Gr},\emptyset}(\cK,\bfT^\dagger)}{\cR[[\Gamma_\cK]]\cdot\mathcal{BF}^\dagger}\xrightarrow{{\rm loc}_\pp}
\frac{\rH^1_{\tt Gr}(\cK_{\pp},\bfT^\dagger)}{\cR[[\Gamma_\cK]]\cdot{\rm loc}_\pp(\mathcal{BF}^\dagger)}\\
\rightarrow
X_{\emptyset,0}(\cK_\infty,A_\F^\dagger)\rightarrow X_{{\tt Gr},0}(\cK_\infty,A_\F^\dagger)\rightarrow 0.
\end{equation}
By \cite[Cor.~4.13]{betina-dimitrov-CM}, the congruence ideal of the  CM Hida family $\G$ in $(\ref{eq:g-CM})$ is generated by $\mathscr{L}_\pp(\cK)_\ac$ after inverting $p$, and therefore by Theorem~\ref{thm:Col} and (\ref{eq:factor-hida}) the map ${\rm Col}_{}^{(2),\dagger}$ multiplied by this generator yields an injection
\begin{equation}\label{eq:katz-col}
\frac{\rH^1_{\tt Gr}(\cK_{\pp},\bfT^\dagger)\cdot\cR^{\rm ur}[[\Gamma_\cK]][1/p]}{\cR^{\rm ur}[[\Gamma_\cK]][1/p]\cdot{\rm loc}_\pp(\mathcal{BF}^\dagger))}\hookrightarrow\frac{\cR^{\rm ur}[[\Gamma_\cK]][1/p]}{({\rm tw}_{\Theta^{-1}}(\mathscr{L}_\pp(\F/\cK)))}\nonumber
\end{equation}
with pseudo-null cokernel, which combined with $(\ref{eq:div-1})$ completes the proof of the equivalence of (i) and (ii). The equivalence between (i)' and (ii)' when $\cK$ satisfies the hypothesis (\ref{ass:gen-H}) is shown in the same way, using the nonvanishing of ${\rm loc}_\pp(\mathcal{BF}^{\dagger,\ac})$ from Corollary~\ref{cor:str-Gr}. 

Now consider the exact sequence
\begin{align*}\label{eq:ES-2b}
0\rightarrow{\rm Sel}_{\tt Gr}(\cK,\bfT^\dagger)\rightarrow{\rm Sel}_{{\tt Gr},\emptyset}(\cK,\bfT^\dagger)\xrightarrow{{\rm loc}_{\overline\pp}}
&\frac{\rH^1(\cK_{\overline\pp},\bfT^\dagger)}{\rH^1_{\tt Gr}(\cK_{\overline\pp},\bfT^\dagger)}\simeq\rH^1(\cK_{\overline{\pp}},\mathscr{F}^-\bfT^\dagger)\\
&\rightarrow
X_{\tt Gr}(\cK_\infty,A_\F^\dagger)\rightarrow X_{{\tt Gr},0}(\cK_\infty,A_\F^\dagger)\rightarrow 0,\nonumber
\end{align*}
which similarly as before implies the equivalence between the claimed $\cR[[\Gamma_\cK]]$-ranks in (ii) and (iii), and by Theorem~\ref{thm:Col} and Lemma~\ref{lem:no-tors} yields the exact sequence
\[
0\rightarrow\frac{
{\rm Sel}_{{\tt Gr},\emptyset}(\cK,\bfT^\dagger)}{\cR[[\Gamma_\cK]]\cdot\mathcal{BF}^\dagger}\xrightarrow{{\rm loc}_{\overline\pp}}
\frac{\rH^1(\cK_{\overline{\pp}},\mathscr{F}^-\bfT^\dagger)}{\cR[[\Gamma_\cK]]\cdot{\rm loc}_{\pp}(\mathcal{BF}^\dagger)}\rightarrow
X_{\tt Gr}(\cK_\infty,A_\F^\dagger)\rightarrow X_{{\tt Gr},0}(\cK_\infty,A_\F^\dagger)\rightarrow 0.\nonumber
\]
Since by Theorem~\ref{thm:Col} the map ${\rm Col}_{}^{(1),\dagger}$  multiplied by a generator of the congruence ideal of $\F$ yields an injection $\rH^1(\cK_{\overline{\pp}},\mathscr{F}^-\bfT^\dagger)\rightarrow\cR[[\Gamma_\cK]]$ with pseudo-null cokernel sending ${\rm loc}_{\pp}(\mathcal{BF}^\dagger)$ into ${\rm tw}_{\Theta^{-1}}(L_p^{\tt Hi}(\F/\cK))$ up to a unit in $\cR^\times$, 
the equivalence between (ii) and (iii) follows. 
\end{proof}

\subsection{Rubin's height formula}\label{sec:main}


Fix a topological generator $\gamma_{\rm cyc}\in\Gamma_\cK^{\rm cyc}$, and using the identification $\cR[[\Gamma_\cK]]\simeq(\cR[[\Gamma_\cK^\ac]])[[\Gamma_\cK^{\rm cyc}]]$, expand 
\begin{equation}\label{eq:2-exp}
{\rm tw}_{\Theta^{-1}}(L_p^{\tt Hi}(\F/\cK))=L_{p,0}^{\tt Hi}(\F^\dagger/\cK)_{\ac}+L_{p,1}^{\tt Hi}(\F^\dagger/\cK)_{\ac}\cdot(\gamma_{\rm cyc}-1)+\cdots
\end{equation}
as a power series in $\gamma_{\rm cyc}-1$. The constant term $L_{p,0}^{\tt Hi}(\F^\dagger/\cK)_{\ac}$ thus corresponds to the image of ${\rm tw}_{\Theta^{-1}}(L_p^{\tt Hi}(\F/\cK))$ under the natural projection $\cR[[\Gamma_\cK]]\rightarrow\cR[[\Gamma_\cK^\ac]]$.


By Shapiro's lemma, 
we may consider the class $\mathcal{BF}^\dagger\in{\rm Sel}_{{\tt Gr},\emptyset}(\cK,\bfT^\dagger)$ of Theorem~\ref{thm:Col} as a system of classes  
$\mathcal{BF}_F^\dagger\in{\rm Sel}_{{\tt Gr},\emptyset}(F,T_{\F}^\dagger)$, 
indexed by the finite extensions $F$ of $\cK$ contained in $\cK_\infty$, compatible under the corestriction maps. For any intermediate extension $\cK\subset L\subset \cK_\infty$, we then set 
\[
\mathcal{BF}^\dagger_{}(L):=\{\mathcal{BF}^\dagger_{F}\}_{\cK\subset_{\rm f} F\subset L}
\] 
with $F$ running over the finite extensions of $\cK$ contained in $L$, 
so in particular $\mathcal{BF}^\dagger_{}(\cK_\infty)$ is nothing but $\mathcal{BF}^\dagger_{}$. 

Let $\cK_n^\ac$ be the subextension of $\cK_\infty^\ac$ with $[\cK_n^\ac:\cK]=p^n$, define $\cK_k^{\rm cyc}$ similarly, and set $L_{n,k}=\cK_n^\ac\cK_k^{\rm cyc}$ for all $k\leqslant\infty$.

\begin{lem}\label{lem:3.1.1}
Assume that $\cK$ satisfies the hypothesis {\rm (\ref{eq:gen-Heeg-f})} and that $\bar{\rho}_\F\vert_{G_\cK}$ is irreducible. Then there is a unique element
\[
\beta^\dagger_{n}\in \rH^1_{}(\cK^\ac_{n,\overline\pp},\mathscr{F}^-\mathbf{T}^{\dagger,{\cyc}})
\]
such that ${\rm loc}_{\overline{\pp}}(\mathcal{BF}^\dagger_{}(L_{n,\infty}))=(\gamma_{\cyc}-1)\beta_n^\dagger$. 
Furthermore, the natural images of the classes $\beta_n^\dagger$ in $\rH^1(\cK_{n,\overline\pp}^\ac,\mathscr{F}^-T_{\F}^\dagger)$ are norm-compatible, defining a class 
\[
\{\beta_n^\dagger(\mathds{1})\}_n\in\varprojlim_n\rH^1(\cK_{n,\overline\pp}^{\ac},\mathscr{F}^-T_{\F}^\dagger)\simeq\rH^1(\cK_{\overline\pp},\mathscr{F}^-\Tc)
\]
that is sent to the linear term $L_{p,1}^{\tt Hi}(\F^\dagger/\cK)_{\ac}$ under the 
map ${\rm Col}_{\ac}^{(1),\dagger}$. 
\end{lem}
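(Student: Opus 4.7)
The plan is to produce a single global class at the ``top'' of the two-variable tower via a Bockstein-type divisibility argument built on Corollary~\ref{cor:str-Gr}, and then to extract all statements of the lemma by descending to finite level $n$ through Shapiro's lemma.

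The starting point would be Corollary~\ref{cor:str-Gr}, which (under the assumed hypotheses) says that the image of ${\rm loc}_{\overline\pp}(\mathcal{BF}^\dagger)$ in $\rH^1(\cK_{\overline\pp},\mathscr{F}^-\bfT^\dagger)$ vanishes modulo $\mathcal{I}_\cyc$. The long exact sequence in cohomology attached to the short exact sequence of $G_{\cK_{\overline\pp}}$-modules
\[
0\to\mathscr{F}^-\bfT^\dagger\xrightarrow{\gamma_\cyc-1}\mathscr{F}^-\bfT^\dagger\to\mathscr{F}^-\Tc\to 0
\]
would then yield a class $\tilde\beta^\dagger\in\rH^1(\cK_{\overline\pp},\mathscr{F}^-\bfT^\dagger)$ with $(\gamma_\cyc-1)\tilde\beta^\dagger$ equal to the $\mathscr{F}^-$-projection of ${\rm loc}_{\overline\pp}(\mathcal{BF}^\dagger)$. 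For uniqueness I would invoke the $\cR[[\Gamma_\cK]]$-torsion-freeness of $\rH^1(\cK_{\overline\pp},\mathscr{F}^-\bfT^\dagger)$: by Theorem~\ref{thm:Col} this module injects into the torsion-free $\cR[[\Gamma_\cK]]$-module $I_\F\otimes_\cR\cR[[\Gamma_\cK]]$, so multiplication by $\gamma_\cyc-1$ is injective and $\tilde\beta^\dagger$ is unique.

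Next, via Shapiro's lemma in the forms $\rH^1(\cK_{\overline\pp},\mathscr{F}^-\bfT^\dagger)\simeq\varprojlim_n\rH^1(\cK^\ac_{n,\overline\pp},\mathscr{F}^-\mathbf{T}^{\dagger,\cyc})$ and $\rH^1(\cK_{\overline\pp},\mathscr{F}^-\Tc)\simeq\varprojlim_n\rH^1(\cK^\ac_{n,\overline\pp},\mathscr{F}^-T_\F^\dagger)$, the finite-level components of $\tilde\beta^\dagger$ give the desired classes $\beta_n^\dagger$; the identity ${\rm loc}_{\overline\pp}(\mathcal{BF}^\dagger(L_{n,\infty}))=(\gamma_\cyc-1)\beta_n^\dagger$ and its uniqueness at each level are inherited from the global statement. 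Crucially, the collection $\{\beta_n^\dagger(\mathds{1})\}_n$ then coincides by construction with the Shapiro-image of the reduction of $\tilde\beta^\dagger$ modulo $\mathcal{I}_\cyc$, so the norm-compatibility of $\{\beta_n^\dagger(\mathds{1})\}_n$ is automatic and this system defines the required class in $\rH^1(\cK_{\overline\pp},\mathscr{F}^-\Tc)$.

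For the final assertion, by $\cR[[\Gamma_\cK]]$-linearity of ${\rm Col}^{(1),\dagger}$ and Theorem~\ref{thm:Col} we would have $(\gamma_\cyc-1)\cdot{\rm Col}^{(1),\dagger}(\tilde\beta^\dagger)={\rm tw}_{\Theta^{-1}}(L_p(\F,\G))$ in $I_\F\otimes_\cR\cR[[\Gamma_\cK]]$, which after multiplication by a generator $c_\F$ of $C(\F)$ becomes $(\gamma_\cyc-1)\cdot c_\F\cdot{\rm Col}^{(1),\dagger}(\tilde\beta^\dagger)={\rm tw}_{\Theta^{-1}}(L_p^{\tt Hi}(\F/\cK))$ in $\cR[[\Gamma_\cK]]$. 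Substituting the Taylor expansion $(\ref{eq:2-exp})$ and using the vanishing $L_{p,0}^{\tt Hi}(\F^\dagger/\cK)_\ac=0$ from Proposition~\ref{thm:hida-1}, both sides are divisible by $\gamma_\cyc-1$; cancelling this factor and reducing modulo $\mathcal{I}_\cyc$ would give, via the identification of $\{\beta_n^\dagger(\mathds{1})\}_n$ with $\tilde\beta^\dagger\bmod\mathcal{I}_\cyc$, that ${\rm Col}_\ac^{(1),\dagger}$ sends this system to $L_{p,1}^{\tt Hi}(\F^\dagger/\cK)_\ac$. The only genuinely delicate ingredient is the torsion-freeness required for the uniqueness of $\tilde\beta^\dagger$, and this is supplied directly by the injectivity part of Theorem~\ref{thm:Col}.
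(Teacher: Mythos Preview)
Your argument is correct and follows essentially the same logic as the paper's proof: both use the explicit reciprocity law of Theorem~\ref{thm:Col} together with Proposition~\ref{thm:hida-1} to see that the $\mathscr{F}^-$-image of ${\rm loc}_{\overline\pp}(\mathcal{BF}^\dagger)$ is killed by projection to the anticyclotomic line, and then invoke torsion-freeness to divide by $\gamma_{\rm cyc}-1$. The only organizational difference is that you carry out the divisibility once at the top of the two-variable tower and descend via Shapiro, whereas the paper phrases things more directly at each level; your version makes the norm-compatibility of $\{\beta_n^\dagger(\mathds{1})\}_n$ transparent. One small remark: for uniqueness the paper cites Lemma~\ref{lem:no-tors} (global torsion-freeness), while you use the injectivity of ${\rm Col}^{(1),\dagger}$ from Theorem~\ref{thm:Col}; since $\beta_n^\dagger$ lives in \emph{local} cohomology, your justification is actually the more directly applicable one.
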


\begin{proof}
By the explicit reciprocity law of Theorem~\ref{thm:Col}, the first claim follows from the vanishing of $L_{p,0}^{\tt Hi}(\F^\dagger/\cK)_{\ac}$ (see Proposition~\ref{thm:hida-1}) and the injectivity of ${\rm Col}^{(1),\dagger}$, with the uniqueness claim being an immediate consequence of Lemma~\ref{lem:no-tors}; the last claim is a direct consequence of the definitions of $\beta^\dagger_n$ and $L^{\tt Hi}_{p,1}(\F^\dagger/\cK)_{\ac}$.
\end{proof}

Let $\cI^{\rm cyc}=(\gamma_{\rm cyc}-1)\subset\cR[[\Gamma_\cK^\cyc]]$ be the augmentation ideal,
and set $\mathcal{J}^{\rm cyc}=\cI^{\rm cyc}/(\cI^{\rm cyc})^2$. By work of Plater \cite{Plater}, and more generally Nekov{\'a}{\v{r}} \cite[\S{11}]{nekovar310}, for every $n$ there is a canonical (up to sign) $\cR$-adic height pairing
\begin{equation}\label{eq:I-ht}
\langle,\rangle_{\cK_n^\ac,\cR}^{\cyc}:{\rm Sel}_{\tt Gr}(\cK_n^\ac,T_\F^\dagger)\times{\rm Sel}_{\tt Gr}(\cK_n^\ac,T_\F^\dagger)
\rightarrow\mathcal{J}^{\rm cyc}\otimes_\cR F_\cR.
\end{equation}

(Note that the local indecomposability hypothesis (H1) in \cite[p.~107]{Plater} is only used to ensure the existence of well-defined sub and quotients at the places above $p$, which for $T_\F^\dagger$ is automatic, while hypotheses (H2) and (H3) in \emph{loc.cit.} follow from \cite[Lem.~2.4.4]{howard-invmath} for $T_\F^\dagger$.)

Denoting by $\rH^1_{\tt Gr}(L_{n,k,v},T_\F^\dagger)\subset\rH^1(L_{n,k,v},T_\F^\dagger)$ the local condition defining ${\rm Sel}_{\tt Gr}(L_{n,k},T_\F^\dagger)$ at a place $v$, 
Plater's definition of $\langle,\rangle_{\cK_n^\ac,\cR}^{\cyc}$ (which we shall briefly recall in the proof of Proposition~\ref{thm:rubin-ht} below)   
shows that $\langle,\rangle_{\cK_n^\ac,\cR}^{\cyc}$ takes integral values in the submodule of ${\rm Sel}_{\tt Gr}(\cK_n^\ac,T_\F^\dagger)$ consisting of classes which are local cyclotomic universal norms at all places $v$ above $p$, i.e., classes in
\[
\rH^1_{\tt Gr}(\cK_{n,v}^\ac,T_\F^\dagger)^{\rm univ}:=\bigcap_{k}{\rm cor}_{L_{n,k,v}/\cK_{n,v}^\ac}(\rH^1_{\tt Gr}(L_{n,k,v},T_\F^\dagger)),
\]
and so by 
\cite[Lem.~2.3.1]{PR-109} the denominators of (\ref{eq:I-ht}) are abounded independently of $n$. 

The next result generalizes the height formula of \cite[Thm.~3.2(ii)]{rubin-ht} to our context. 

\begin{prop}\label{thm:rubin-ht}
Assume that $\cK$ satisfies the hypothesis {\rm (\ref{ass:gen-H})}  and that $\bar{\rho}_\F\vert_{G_\cK}$ is irreducible. Then the classes $\mathcal{BF}^\dagger_{\cK_n^\ac}$ land in ${\rm Sel}_{\tt Gr}(\cK_n^\ac,T_\F^\dagger)$, and for every $x\in{\rm Sel}_{\tt Gr}(\cK_n^\ac,T_\F^\dagger)$ we have
\begin{equation}\label{eq:rubin-ht}
\langle\mathcal{BF}^\dagger_{\cK_n^\ac},x\rangle^{\rm cyc}_{\cK_n^\ac,\cR}=(\beta_n^\dagger(\mathds{1}),{\rm loc}_{\overline\pp}(x))_{\cK_{n,\ppbar}^\ac}\otimes(\gamma_{\rm cyc}-1),
\end{equation}
where $(,)_{\cK_{n,\ppbar}^\ac}$ is the local Tate pairing
\[
\frac{\rH^1(\cK_{n,\overline\pp}^\ac,T_\F^\dagger)}{\rH^1_{\tt  Gr}(\cK_{n,\overline\pp}^\ac,T_\F^\dagger)}
\times \rH^1_{\tt Gr}(\cK_{n,\overline\pp}^\ac,T_\F^\dagger)\rightarrow\cR.
\]
\end{prop}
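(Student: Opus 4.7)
The plan is to split the proof into three parts: (1) prove the Selmer inclusion, (2) recall the infinitesimal-deformation recipe for Plater's height pairing, and (3) compute the pairing using the natural lift of $\mathcal{BF}^\dagger_{\cK_n^{\rm ac}}$ furnished by the big class $\mathcal{BF}^\dagger(L_{n,\infty})$ modulo $(\cI^{\rm cyc})^2$.

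First, note that the inclusion $\mathcal{BF}^\dagger_{\cK_n^{\rm ac}} \in {\rm Sel}_{\tt Gr}(\cK_n^{\rm ac}, T_\F^\dagger)$ is essentially already established. Indeed, Corollary~\ref{cor:str-Gr} gives $\mathcal{BF}^{\dagger,{\rm ac}} \in {\rm Sel}_{\tt Gr}(\cK, \mathbf{T}^{\dagger,{\rm ac}})$, and via Shapiro's lemma the Greenberg local conditions at $p$ are compatible with the natural maps in the $\cK_n^{\rm ac}/\cK$ tower (and unramifiedness away from $p$ is preserved by specialization); thus each finite-level component lies in ${\rm Sel}_{\tt Gr}(\cK_n^{\rm ac}, T_\F^\dagger)$.

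Next, I would briefly recall Plater's construction of the cyclotomic $\cR$-adic height pairing in a form tailored to our setting (see also \cite[\S{11.3}]{nekovar310}). Given $y \in {\rm Sel}_{\tt Gr}(\cK_n^{\rm ac}, T_\F^\dagger)$, one chooses a global lift $\tilde y \in \rH^1(\mathfrak{G}_{\cK_n^{\rm ac},\Sigma}, \mathbf{T}^{\dagger,{\rm cyc}}/(\cI^{\rm cyc})^2)$ in the Selmer group relaxed at the primes above $p$, together with local lifts $\tilde y_v^+$ of ${\rm loc}_v(y)$ to $\rH^1(\cK_{n,v}^{\rm ac}, \mathscr{F}^+\mathbf{T}^{\dagger,{\rm cyc}}/(\cI^{\rm cyc})^2)$ for each $v \mid p$. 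The difference ${\rm loc}_v(\tilde y) - \tilde y_v^+$ then maps to a class in $\rH^1(\cK_{n,v}^{\rm ac}, \mathscr{F}^-T_\F^\dagger) \otimes \mathcal{J}^{\rm cyc}$, and the height is given by
\[
\langle y, x\rangle^{\rm cyc}_{\cK_n^{\rm ac},\cR} = \sum_{v\mid p}\bigl({\rm loc}_v(\tilde y)-\tilde y_v^+,\ {\rm loc}_v(x)\bigr)_v,
\]
where $(,)_v$ is local Tate duality tensored with $\mathcal{J}^{\rm cyc}$. The independence of this expression from the choice of lifts is exactly the reciprocity implicit in global Poitou--Tate duality applied to $x$.

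For $y = \mathcal{BF}^\dagger_{\cK_n^{\rm ac}}$, the canonical lift is provided by the reduction of $\mathcal{BF}^\dagger(L_{n,\infty})$ modulo $(\cI^{\rm cyc})^2$. By Theorem~\ref{thm:Col}, this class lies in ${\rm Sel}_{{\tt Gr},\emptyset}$, so at $v = \pp$ it automatically takes values in $\mathscr{F}^+\mathbf{T}^{\dagger,{\rm cyc}}$ and the $\pp$-contribution vanishes (take $\tilde y_\pp^+$ to be this tautological lift). At $v = \overline\pp$, on the other hand, Lemma~\ref{lem:3.1.1} tells us exactly that the image of ${\rm loc}_{\overline\pp}(\mathcal{BF}^\dagger(L_{n,\infty}))$ in $\rH^1(\cK_{n,\overline\pp}^{\rm ac}, \mathscr{F}^-\mathbf{T}^{\dagger,{\rm cyc}})$ equals $(\gamma_{\rm cyc}-1)\beta_n^\dagger$; modulo $(\cI^{\rm cyc})^2$ this becomes $(\gamma_{\rm cyc}-1)\beta_n^\dagger(\mathds{1})$, and pairing against ${\rm loc}_{\overline\pp}(x)$ via local Tate duality yields precisely the right-hand side of \eqref{eq:rubin-ht}.

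The main obstacle I anticipate is checking that Plater's pairing in our $\cR$-adic setting is compatible with the concrete computation above—specifically, that the indeterminacy in the choice of global and local lifts cancels out in the way needed, and that the signs and scaling conventions align with those in Lemma~\ref{lem:3.1.1}. This amounts to a Poitou--Tate bookkeeping: the difference between any two candidate $\tilde y$ is a global class, so its contribution to the sum over $v \mid p$ pairs trivially with the Selmer class $x$; and the single nonzero local contribution at $\overline\pp$ is determined by the canonical lift from $\mathcal{BF}^\dagger(L_{n,\infty})$, which is the content of Lemma~\ref{lem:3.1.1}. Once this is laid out carefully (mirroring the argument of \cite[Thm.~3.2(ii)]{rubin-ht} but in the deformation-theoretic framework of \cite{Plater, nekovar310}), the formula follows.
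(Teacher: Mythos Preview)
Your proposal is correct and follows essentially the same strategy as the paper: use the big class $\mathcal{BF}^\dagger(L_{n,\infty})$ as the canonical cyclotomic lift of $\mathcal{BF}^\dagger_{\cK_n^{\rm ac}}$, observe that the $\pp$-contribution vanishes because the class is Greenberg there, and identify the $\overline\pp$-contribution with $\beta_n^\dagger(\mathds{1})$ via Lemma~\ref{lem:3.1.1}. The only difference is packaging: the paper works with Plater's extension-class definition of the height (lifting through $0\to T_\F^\dagger\to X\to\cR\to 0$ and taking a limit over $L_{n,k}$), then invokes Arnold's derivative map $\mathfrak{Der}$ and \cite[Prop.~3.10]{arnold-ht} to reduce to the local Tate pairing, whereas you use the equivalent Nekov\'a\v{r}-style formulation via lifts modulo $(\cI^{\rm cyc})^2$ directly---but the computations and the key inputs are the same.
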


\begin{proof}
The first claim follows from the explicit reciprocity law of Theorem~\ref{thm:Col}, the vanishing of $L_{p,0}^{\tt Hi}(\F^\dagger/\cK)_{\ac}$, and the injectivity of ${\rm Col}^{(1),\dagger}$. On the other hand, the proof of formula $(\ref{eq:rubin-ht})$ could be deduced from the general result \cite[(11.3.14)]{nekovar310}, but shall give a proof following the more direct generalization of Rubin's formula contained in \cite[\S{3}]{arnold-ht}.

We begin by recalling Plater's definition of the $\cR$-adic height pairing (itself a generalization of Perrin-Riou's \cite[\S{1.2}]{PR-109} in the $p$-adic setting). Let $\lambda$ be the isomorphism $\Gamma_\cK^{\rm cyc}\simeq\mathcal{J}^{\rm cyc}$ sending $\gamma_{\rm cyc}$ to the class of $\gamma_{\rm cyc}-1$. Composing with the natural isomorphism ${\rm Gal}(L_{n,\infty}/\cK_n^\ac)\simeq\Gamma^{\rm cyc}$ the map $\lambda$ defines a class in $\rH^1(\cK_n^\ac,\mathcal{J}^{\rm cyc})$, where we equip $\mathcal{J}^{\rm cyc}$ with the trivial Galois action, and so taking cup product  we get
\[
\rho_v:\rH^1(K_{n,v}^\ac,\cR(1))\xrightarrow{\cup{\rm loc}_v(\lambda)}\rH^2(K_{n,v}^\ac,\mathcal{J}^{\rm cyc}(1))\simeq\mathcal{J}^{\rm cyc}
\]
for every place $v$.


Denote by ${\rm Sel}_{\tt Gr}(\cK_n^\ac,T_\F^\dagger)^{\rm univ}$ the  submodule of ${\rm Sel}_{\tt Gr}(\cK_n^\ac,T_\F^\dagger)$ (with $\cR$-torsion quotient, as noted earlier) consisting of classes lying in $\rH^1_{\tt Gr}(\cK_{n,v}^\ac,T_\F^\dagger)^{\rm univ}$ for all $v\mid p$, and let $x, y\in{\rm Sel}_{\tt Gr}(\cK_n^\ac,T_\F^\dagger)^{\rm univ}$. Then $x$ corresponds to an extension of Galois modules
\[
0\rightarrow T_\F^\dagger\rightarrow X\rightarrow\cR\rightarrow 0.
\]
The Kummer dual of this sequence induces maps on cohomology
\[
\rH^1(\cK_{n}^\ac,X^*(1))\rightarrow\rH^1(\cK_{n}^\ac,T_\F^\dagger)\xrightarrow{\delta}\rH^2(\cK_{n}^\ac,\cR(1))
\]
such that $\delta(y)=0$ (since $\rH^2(\cK_{n}^\ac,\cR(1))$ injects into $\bigoplus_v\rH^2(\cK_{n,v}^\ac,\cR(1))$ and the $v$-th component of $\delta(y)$ is given by ${\rm loc}_v(y)\cup{\rm loc}_v(x)=0$ by the self-duality of Greenberg's local conditions). Thus   $y$ is the image of some $y^{\rm glob}\in\rH^1(\cK_n^\ac,X^*(1))$. 

On the other hand, if $v$ is any place of $\cK_{n}^\ac$, for every $k$ we can write ${\rm loc}_v(y)={\rm cor}_{L_{n,k,v}/\cK_{n,v}^\ac}(y_{k,v})$ for some $y_{k,v}\in\rH^1_{\tt Gr}(L_{n,k,v},T_\F^\dagger)$, and by a similar argument as above there exists a class $\widetilde{y}_{k,v}\in\rH^1(L_{n,k,v},X^*(1))$ lifting $y_{k,v}$ under the natural map $\pi_v$ in the exact sequence
\begin{equation}\label{eq:local-v}
\rH^1(L_{n,k,v},X^*(1))\xrightarrow{\pi_v}\rH^1(L_{n,k,v},T_\F^\dagger)\xrightarrow{\delta_v}\rH^2(L_{n,k,v},\cR(1)).
\end{equation}
The difference ${\rm loc}_v(y^{\rm glob})-{\rm cor}_{L_{n,k,v}/\cK_{n,v}^\ac}(\widetilde{y}_{k,v})$ is then the image of some class $w_{k,v}\in\rH^1(\cK_{n,v}^\ac,\cR(1))$, and we define
\[
\langle y,x\rangle_{\cK_n^\ac,\cR}^{\rm cyc}:=\lim_{k\to\infty}\sum_v\rho_v(w_{k,v}),
\]
a limit which is easily checked to exist and be independent of all choices. If in addition $y=y_0$ is the base class of a compatible system of classes 
\[
y_\infty=\{y_k\}_k\in\rH^1(\cK_n^\ac,\mathbf{T}^{\dagger,{\rm cyc}})=\varprojlim_k\rH^1(L_{n,k},T_\F^\dagger),
\] 
then one easily checks (see e.g. \cite[Lem.~3.2.2]{AHsplit}) that there are classes $y_k^{\rm glob}\in\rH^1(L_{n,k},X^*(1))$ lifting $y_k$. Similarly as above, for every place $v$ of $L_{n,k}$ the corestriction of ${\rm loc}_v(y_k^{\rm glob})-\widetilde{y}_{k,v}$ to $\rH^1(K_{n,v}^\ac,X^*(1))$ is the image of a class $w'_{k,v}\in\rH^1(K_{n,v}^\ac,\cR(1))$, and with these choices we see that the above expression for $\langle y,x\rangle_{\cK_n^\ac,\cR}^{\rm cyc}$ reduces to
\begin{equation}\label{eq:ht-p}
\langle y,x\rangle_{\cK_n^\ac,\cR}^{\rm cyc}=\lim_{k\to\infty}\sum_{v\mid p}\rho_v(w'_{k,v}).
\end{equation}

As in \cite[\S{3.8}]{arnold-ht}, division by $\gamma_{\rm cyc}-1$ defines a natural \emph{derivative map}
\[
\mathfrak{Der}_{}:\rH^1(\cK_{n,v}^\ac,T_\F^\dagger\otimes_\cR\mathcal{I}^{\rm cyc})\rightarrow\rH^1(\cK_{n,v}^\ac,T_\F^\dagger)
\] 
whose composition with the natural projection $\rH^1(\cK_{n,v}^\ac,T_\F^\dagger)\rightarrow\rH^1(\cK_{n,v}^\ac,\fil^-T_\F^\dagger)$ factors as
\begin{equation}\label{eq:der-v}
\begin{aligned}
\xymatrix{
\rH^1(\cK_{n,v}^\ac,T_\F^\dagger\otimes_\cR\mathcal{I}^{\rm cyc})\ar@{->>}[r]\ar[d]_-{\mathfrak{Der}_{}}&
\rH^1(\cK_{n,v}^\ac,\fil^-T_\F^\dagger\otimes_\cR\mathcal{I}^{\rm cyc})\ar[d]^-{\mathfrak{Der}_{-}}\\
\rH^1(\cK_{n,v}^\ac,T_\F^\dagger)\ar@{->>}[r]&\rH^1(\cK_{n,v}^\ac,\fil^-T_\F^\dagger).
}
\end{aligned}
\end{equation}

Letting ${\rm pr}_{\mathds{1}}$ be the natural projection $\rH^1(\cK_{n,v}^\ac,X^*(1)\otimes_\cR\cR[[\Gamma^{\rm cyc}]])\rightarrow\rH^1(\cK_{n,v}^\ac,X^*(1))$, the  expression $(\ref{eq:ht-p})$ for $\langle y,x\rangle_{\cK_n^\ac,\cR}^{\rm cyc}$ can be rewritten as 
\[
\langle y,x\rangle_{\cK_n^\ac,\cR}^{\rm cyc}=\sum_{v\vert p}{\rm pr}_{\mathds{1}}({\rm loc}_v(y_\infty^{\rm glob})-\widetilde{y}_{\infty,v}),
\]
where ${\rm loc}_v(y_\infty^{\rm glob})-\widetilde{y}_{\infty,v}\in\rH^1(\cK_{n,v}^\ac,X^*(1)\otimes_\cR\cR[[\Gamma^{\rm cyc}]])$ is a lift of ${\rm loc}_v(y_\infty)-y_{\infty,v}\in\rH^1(\cK_{n,v}^\ac,T_\F^\dagger\otimes\mathcal{I}^{\rm cyc})$, and hence by \cite[Prop.~3.10]{arnold-ht} we obtain
\begin{equation}\label{eq:der}
\begin{aligned}
\langle y,x\rangle_{\cK_n^\ac,\cR}^{\rm cyc}&=\sum_{v\mid p}\delta_v\left(\mathfrak{Der}({\rm loc}_v(y_\infty)-y_{\infty,v})\right)\otimes(\gamma^{\rm cyc}-1)\\
&=\sum_{v\mid p}\left(\mathfrak{Der}({\rm loc}_v(y_\infty)-y_{\infty,v}),{\rm loc}_v(x)\right)_{\cK_{n,v}^\ac}\otimes(\gamma^{\rm cyc}-1)\\
&=\sum_{v\mid p}\left(\mathfrak{Der}_{-}({\rm loc}_v(y_\infty)),{\rm loc}_v(x)\right)_{\cK_{n,v}^\ac}\otimes(\gamma^{\rm cyc}-1),
\end{aligned}
\end{equation}
where the last equality follows from the commutativity of $(\ref{eq:der-v})$ and the fact that $y_{\infty,v}=\{y_{k,v}\}_k$ has trivial image in $\rH^1(\cK_{n,v}^\ac,\fil^-\mathbf{T}^{\dagger,{\rm cyc}})$. 

Now taking $y_\infty=\mathcal{BF}^\dagger(L_{n,\infty})$ in $(\ref{eq:der})$ we see that the contribution to $\langle\mathcal{BF}^\dagger_{\cK_n^\ac},x\rangle^{\rm cyc}_{\cK_n^\ac,\cR}$ from $\pp$ is zero, since $\mathcal{BF}^\dagger(L_{n,\infty})\in{\rm Sel}_{{\tt Gr},\emptyset}(\cK_n^\ac,\mathbf{T}^{\dagger,{\rm cyc}})$ is finite at the places above $\pp$, while at $\ppbar$ chasing through the definitions we see that
\[
\mathfrak{Der}_{-}({\rm loc}_{\ppbar}(\mathcal{BF}^\dagger(L_{n,\infty}))=\beta_n^\dagger(\mathds{1}),
\] 
thus concluding the proof of the height formula (\ref{eq:rubin-ht}).
\end{proof}

%

\section{Big Heegner points}\label{sec:HP} \label{sec:bigHP}

In this section, we recall the construction of big Heegner points and classes \cite{howard-invmath,LV} with some complements.  

Fix a prime $p>3$ and a positive integer $N$ prime to $p$. Let $\cK$ be an imaginary quadratic field with ring of integers $\cO_\cK$ and  
discriminant $-D_\cK<0$ prime to $Np$, and write 
\[
N=N^+N^-
\]
with $N^+$ (resp. $N^-$) divisible only by primes which are split (resp. inert) in $\cK$. Throughout, we assume the following \emph{generalized Heegner hypothesis}: 
\begin{equation}\label{ass:gen-H}
\textrm{$N^-$ is the squarefree product of an even number of primes,}\tag{gen-H}
\end{equation}
and fix an integral ideal $\mathfrak{N}^+$ of $\cK$ with $\cO_\cK/\mathfrak{N}^+\simeq\bZ/N^+\bZ$.

\subsection{Towers of Shimura curves}\label{subsec:Sh}

Let $B/\bQ$ be an 
indefinite quaternion algebra of discriminant $N^-$. We fix a $\Q$-algebra embedding
$\iota_\cK:\cK\hookrightarrow B$, which we shall use to identify $\cK$ with a subalgebra of $B$.
Let $z\mapsto\overline{z}$ be  the non-trivial automorphism of $\cK$, and choose a basis $\{1,j\}$
of $B$ over $\cK$ such that:
\begin{itemize}
	\item $j^2=\beta\in\Q^\times$ with $\beta<0$ and $jt=\bar tj$ for all $t\in \cK$,
	\item $\beta\in (\Z_q^\times)^2$ for $q\mid pN^+$, and $\beta\in\Z_q^\times$ for $q\mid D_\cK$.
\end{itemize}

Fix a square-root $\delta=\sqrt{-D_\cK}$, and define $\boldsymbol{\theta}\in \cK$ by
\begin{equation}\label{eq:vartheta}
\boldsymbol{\theta}:=\frac{D_\cK'+\delta}{2},\quad\textrm{where}\;\;
D_\cK':=\left\{
\begin{array}{ll}
D_\cK &\textrm{if $2\nmid D_\cK$,}\\[0.1cm]
D_\cK/2 &\textrm{if $2\mid D_\cK$,}
\end{array}
\right.
\end{equation}
so that $\mathcal O_\cK=\Z+\boldsymbol{\theta}\bZ$. For every prime $q\mid pN^+$,
define the isomorphism $i_q:B_q:=B\otimes_\Q\Q_q \simeq \M_2(\Q_q)$ by
\[
i_q(\boldsymbol{\theta})=\mat{\mathrm{Tr}(\boldsymbol{\theta})}{-\mathrm{Nm}(\boldsymbol{\theta})}10,
\quad\quad
i_q(j)=\sqrt\beta\mat{-1}{\mathrm{Tr}(\boldsymbol{\theta})}01,
\]
where $\mathrm{Tr}$ and $\mathrm{Nm}$ are the reduced trace and norm maps on $B$. 
For primes $q\nmid Np$, we fix any isomorphism $i_q:B_q\simeq \M_2(\Q_q)$
with $i_q(\mathcal O_\cK\otimes_\Z\Z_q)\subset\M_2(\Z_q)$. 

Let $\hat{\bZ}$ denote the profinite completion of $\bZ$, and for any abelian group $M$ set $\hat{M}=M\otimes_{\bZ}\hat{\bZ}$. 
For each $r\geqslant 0$, let $R_{r}$ be the Eichler order of $B$ of level $N^+p^r$ with respect to 
the isomorphisms $\{i_q:B_q\simeq{\rm M}_2(\Q_q)\}_{q\nmid N^-}$, and let $U_{r}\subset\hat{R}_{r}^\times$ be the compact open
subgroup 
\[
U_{r}:=\left\{(x_q)_q\in\hat{R}_{r}^\times\;\colon\;i_p(x_p)\equiv\mat 1*0*\pmod{p^r}\right\}.
\]

Consider the double coset spaces 
\begin{equation}\label{def:gross-curve}
 X_{r}=B^\times\backslash\bigl(\Hom_\Q(\cK,B)\times\hat{B}^\times/U_{r}\bigr),
\end{equation}
where $b\in B^\times$ acts on $(\Psi,g)\in\Hom_\Q(\cK,B)\times\hat B^\times$ by
\[
b\cdot(\Psi,g)=(b\Psi b^{-1},bg),
\]
and $U_{r}$ acts on $\hat{B}^\times$ by right multiplication. As is well-known (see e.g. \cite[\S\S{2.1-2}]{LV}), $ X_{r}$ can be identified 
with a set of algebraic points on the Shimura curve with complex uniformization
\[
X_{r}(\bC)=B^\times\backslash\bigl(\Hom_\Q(\bC,B)\times\hat{B}^\times/U_{r}\bigr).
\]

Let ${\rm rec}_\cK:\cK^\times\backslash\hat{\cK}^\times\rightarrow{\rm Gal}(\cK^{\rm ab}/\cK)$ be the reciprocity map of class field theory. By Shimura's reciprocity law, if $P\in X_{r}$
is the class of a pair $(\Psi,g)$, then $\sigma\in{\rm Gal}(\cK^{\rm ab}/\cK)$ acts on $P$ by
\[
P^{\sigma}:=[(\Psi,\hat{\Psi}(a)g)],
\]
where $a\in \cK^\times\backslash\hat{\cK}^\times$ is such that ${\rm rec}_\cK(a)=\sigma$, and $\hat\Psi:\hat \cK\rightarrow\hat B$ is the adelization of $\Psi$.  We extend this to an action of 
$G_\cK:={\rm Gal}(\overline{\Q}/\cK)$ in the obvious manner. 

The curves $ X_{r} $ are also equipped
with natural actions of Hecke operators $T_\ell$ for $\ell\nmid Np$, $U_\ell$ for $\ell\vert Np$, and diamond
operators $\langle d \rangle$ for $d\in(\Z/p^r\Z)^\times$, as described in e.g. \cite[\S{2.4}]{LV} and \cite[\S{2.1}]{ChHs2}.

\subsection{Compatible systems of Heegner points}\label{subsec:construct}

For each $c\geqslant 1$, let $\cO_c=\Z+c\cO_\cK$ be the order of $\cK$ of conductor $c$ and denote
by $H_c$ the ring class field of $\cK$ of conductor $c$, so that ${\rm Pic}(\cO_c)\simeq{\rm Gal}(H_c/\cK)$ by class field theory. In particular, $H_1$ is the Hilbert class field of $\cK$.

\begin{defn}\label{def:HP}
	A point $P\in  X_{r}$ is a \emph{Heegner point of conductor $c$}
	if it is the class of a pair $(\Psi,g)$ with
	\[
	\Psi(\cO_c)=\Psi(\cK)\cap(B\cap g\hat{R}_{r}g^{-1})
	\]
	and
	\[
	\Psi_p\left((\cO_c\otimes\Z_p)^\times\cap(1+p^r\cO_c\otimes\Z_p)^\times\right)
	=\Psi_p\left((\cO_c\otimes\Z_p)^\times\right)\cap g_pU_{r,p}g_p^{-1},
	\]
	where $\Psi_p$ and $U_{r,p}$ denote the $p$-components of $\Psi$ and $U_{r}$, respectively. 
\end{defn}

For each prime $q\neq p$ define
\begin{itemize}
	\item{} $\varsigma_q=1$, if $q\nmid N^+$,
	\item{} $\varsigma_q=\delta^{-1}\begin{pmatrix}\boldsymbol{\theta} & \overline{\boldsymbol{\theta}} \\ 1 & 1 \end{pmatrix}
	\in{\rm GL}_2(\cK_{\mathfrak{q}})={\rm GL}_2(\Q_q)$, if
	$q=\mathfrak{q}\overline{\mathfrak{q}}$ splits with $\mathfrak{q}\mid\mathfrak{N}^+$,
\end{itemize}
and for each $s\geqslant 0$, let
\begin{itemize}
	\item{} $\varsigma_p^{(s)}=\begin{pmatrix}\boldsymbol{\theta}&-1\\1&0\end{pmatrix}\begin{pmatrix}p^s&0\\0&1\end{pmatrix}
	\in{\rm GL}_2(\cK_{\mathfrak{p}})={\rm GL}_2(\Q_p)$,
	if $p=\mathfrak{p}\overline{\mathfrak{p}}$ splits in $\cK$,
	\item{}
	$\varsigma_p^{(s)}=\begin{pmatrix}0&1\\-1&0\end{pmatrix}\begin{pmatrix}p^s&0\\0&1\end{pmatrix}$, if $p$ is inert in $\cK$.
\end{itemize}

\begin{rem} We shall ultimately assume that $p$ splits in $\cK$, but it is worth-noting that, just as in \cite{howard-invmath, LV}, the constructions in this section also allow the case $p$ inert in $\cK$.
\end{rem}

Set $\varsigma^{(s)}:=\varsigma_p^{(s)}\prod_{q\neq p}\varsigma_q$,  which we view as an element in $\hat{B}^\times$ via the isomorphisms 
$\{i_q:B_q\simeq{\rm M}_2(\Q_q)\}_{q\nmid N^-}$ introduced in $\S\ref{subsec:Sh}$. With the $\bQ$-algebra embedding $\iota_\cK:\cK\hookrightarrow B$ fixed there, 
one easily checks  that for all $s\geqslant r$ the points
\[
{P}_{s,r}^{}:=[(\iota_\cK,\varsigma^{(s)})]\in X_{r}
\]
are Heegner points of conductor $p^{s}$ in the sense of Definition~\ref{def:HP} with the following properties:
\begin{itemize}
	\item{} \emph{Field of definition}: ${P}_{s,r}\in H^0(H_{p^s}({\mu}_{p^r}),{X}_{r})$.
	\item{} \emph{Galois equivariance}: For all $\sigma\in{\rm Gal}(H_{p^s}({\mu}_{p^r})/H_{p^s})$,
	\[
	{P}_{s,r}^\sigma=\langle\vartheta(\sigma)\rangle\cdot {P}_{s,r},
	\]
	where $\vartheta:{\rm Gal}(H_{p^s}({\mu}_{p^r})/H_{p^{s}})\rightarrow\Z_p^\times/\{\pm{1}\}$ is such that
	$\vartheta^2=\varepsilon_{\rm cyc}$.
	\item{} \emph{Horizontal compatibility}: If $s\geqslant r> 1$, then
	\[
	\sum_{\sigma\in{\rm Gal}(H_{p^s}({\mu}_{p^r})/H_{p^{s-1}}({\mu}_{p^r}))}
	{\alpha}_r({P}_{s,r}^{{\sigma}})
	=U_p\cdot{P}_{s,r-1},
	\]
	where ${\alpha}_r: X_{r}\rightarrow {X}_{r-1}$ is the map
	induced by the inclusion $U_{r}\subset U_{r-1}$.
	\item{} \emph{Vertical compatibility}: If $s\geqslant r\geqslant 1$, then
	\[
	\sum_{\sigma\in{\rm Gal}(H_{p^s}({\mu}_{p^r})/H_{p^{s-1}}({\mu}_{p^r}))}
	{P}_{s,r}^{{\sigma}}
	=U_p\cdot{P}_{s-1,r}.
	\]
\end{itemize}
(See \cite[Thm.~1.2]{cas-longo} and the references therein.)

\subsection{Big Heegner points}\label{subsec:bigHP}

Let $\mathbb{B}_r$ the $\bZ_p$-algebra generated by the Hecke operators 
$T_\ell$, $U_\ell$, and $\langle a\rangle$ acting on the Shimura curve ${X}_{r}$ from $\S$\ref{subsec:Sh}, 
let  $\mathfrak{h}_{r}$ be the $\bZ_p$-algebra generated by the usual Hecke operators $T_\ell$,
$U_\ell$, and $\langle a\rangle$ acting on the space $S_2(\Gamma_{0,1}(N,p^r))$ 
of classical modular form of level $\Gamma_{0,1}(N,p^r):=\Gamma_0(N)\cap\Gamma_1(p^r)$, 
and let $\mathbb{T}^{N^-}_{N,r}$ be the quotient of $\mathfrak{h}_r$ acting faithfully on the 
subspace of $S_2(\Gamma_{0,1}(N,p^r))$ consisting of $N^-$-new forms. 

The Jacquet--Langlands correspondence 
yields $\bZ_p$-algebra isomorphisms 
\begin{equation}\label{eq:JL}
\mathbb{B}_r\simeq\mathbb{T}^{N^-}_{N,r}
\end{equation} 
(see e.g. \cite[\S{2.4}]{HMI}). In particular, letting $e_{\rm ord}=\lim_{n\to\infty}U_p^{n!}$ be Hida's ordinary projector, the $\bZ_p$-module 
\[
\mathfrak{D}_{r}^{\rm ord}:=e_{\rm ord}({\rm Div}({X}_{r})\otimes_{\Z}\Z_p)
\]
is naturally endowed with an action of $\mathbb{T}_{r}^{\rm ord}:=e_{\rm ord}\mathbb{T}^{N^-}_{N,r}$.  
 
Denote by $\mathbb T_{r}^\dagger$ be the free $\mathbb T_{r}^{\rm ord}$-module of rank one 
equipped with the Galois action via the inverse of the critical character $\Theta$, and set $\mathfrak{D}_{r}^\dagger:=\mathfrak{D}_{r}^{\rm ord}\otimes_{\mathbb{T}_{r}^{\rm ord}}\mathbb T_{r}^\dagger$. 

Let ${P}_{s,r}\in{X}_{r}$ be the Heegner point of conductor $p^s$ ($s\geqslant r$) constructed in $\S$\ref{subsec:construct}, and denote by $\mathcal{P}_{s,r}$ the image of
$e_{\rm ord}{P}_{s,r}^{}$ in $\mathfrak{D}_{r}^{\rm ord}$.
It follows from the Galois-equivariance property of ${P}_{s,r}$ that
\[
\mathcal{P}_{s,r}^\sigma=\Theta(\sigma)\cdot\mathcal{P}_{s,r}
\]
for all $\sigma\in{\rm Gal}(H_{p^s}({\mu}_{p^r})/H_{p^{s}})$ (see \cite[\S{7.1}]{LV}),
and hence $\mathcal{P}_{s,r}$ defines an element
\begin{equation}\label{eq:pt-dag}
\mathcal{P}_{s,r}\otimes\zeta_r\in\rH^0(H_{p^{s}},\mathfrak{D}_{r}^\dagger).
\end{equation}
Let ${\rm Pic}({X}_{r})$ be the Picard variety of ${X}_{r}$, and set
\[
\mathfrak{J}_{r}^{\rm ord}:=e_{\rm ord}({\rm Pic}({X}_{r})\otimes_{\Z}\Z_p),\quad\quad\mathfrak{J}_{r}^\dagger:=\mathfrak{J}_{r}^{\rm ord}\otimes_{\mathbb{T}_r^{\rm ord}}\mathbb{T}_r^\dagger.
\]
Since the $U_p$-operator has degree $p$, taking ordinary parts yields an isomorphism $\mathfrak{D}_r^{\rm ord}\simeq\mathfrak{J}_r^{\rm ord}$, and so we may also view (\ref{eq:pt-dag}) as $\mathcal{P}_{s,r}\otimes\zeta_r\in\rH^0(H_{p^{s}},\mathfrak{J}_{r}^\dagger)$.

Let $t\geqslant 0$, and denote by $\mathfrak{G}_{H_{p^t}}$ the Galois group of the maximal
extension of $H_{p^t}$ unramified outside the primes above $pN$. Consider the twisted Kummer map
\[
{\rm Kum}_r:\rH^0(H_{p^t},\mathfrak{J}_{r}^\dagger)
\rightarrow\rH^1(\mathfrak{G}_{H_{p^t}},{\rm Ta}_p(\mathfrak{J}_{r}^\dagger))
\]
explicitly defined in \cite[p.~101]{howard-invmath}. This map is equivariant for the Galois and $U_p$ actions, and hence by horizontal compatibility the classes
\begin{equation}\label{eq:HP-r}
\mathfrak{X}_{p^t,r}:={\rm Kum}_r({\rm Cor}_{H_{p^{r+t}/H_{p^t}}}(\mathcal{P}_{r+t,r}\otimes\zeta_r))
\end{equation}
satisfy ${\alpha}_{r,*}(\mathfrak{X}_{p^t,r})=U_p\cdot\mathfrak{X}_{p^t,r-1}$ for all $r>1$, where 
\[
{\alpha}_{r,*}:\rH^1(\mathfrak{G}_{H_{p^t}},{\rm Ta}_p(\mathfrak{J}_{r}^\dagger))\rightarrow\rH^1(\mathfrak{G}_{H_{p^t}},{\rm Ta}_p(\mathfrak{J}_{r-1}^\dagger))
\]
is the map induced by the covering ${X}_r\rightarrow{X}_{r-1}$ by Albanese functoriality.

Now let $\F\in\cR[[q]]$ be a Hida family of tame level $N$. To define big Heegner points attached to $\F$ from the system of Heegner classes (\ref{eq:HP-r}) for varying $r$, we need to recall the following result realizing the big Galois representation $T_\F$ attached to $\F$ in the \'etale cohomology of the $p$-tower of Shimura curves
\[
\cdots\rightarrow{X}_r\rightarrow{X}_{r-1}\rightarrow\cdots
\]
(rather than classical modular curves, as implicitly taken in $\S\ref{sec:selmer}$).

Let $\kappa_\cR:=\cR/\mathfrak{m}_\cR$ be the residue field of $\cR$, and denote by $\bar{\rho}_{\F}:G_\bQ\rightarrow{\rm GL}_2(\kappa_\cR)$ the associated (semi-simple) residual representation. Set
\[
\mathbb{T}_\infty^{\rm ord}:=\varprojlim_r\mathbb{T}_r^{\rm ord}.
\] 
By (\ref{eq:JL}) (see also the discussion in \cite[\S{5.3}]{LV}), there is a maximal ideal $\mathfrak{m}\subset\mathbb{T}^{\rm ord}_\infty$ associated with $\bar{\rho}_{\F}$, and $\F$ corresponds to a minimal prime in the localization $\mathbb{T}^{\rm ord}_{\infty,\mathfrak{m}}$.

\begin{thm}\label{thm:helm}
Assume that 
\begin{itemize}
\item[(i)] $\bar{\rho}_{\F}$ is irreducible and $p$-distinguished,
\item[(ii)] $\bar{\rho}_{\F}$ is ramified at every prime $\ell\vert N^-$ with $\ell\equiv\pm{1}\pmod{p}$,
\end{itemize}
and let $\mathfrak{m}\subset\mathbb{T}_\infty^{\rm ord}$ be the maximal ideal associated with $\bar{\rho}_{\F}$. Then the module 
\[
\mathbf{Ta}_{\mathfrak{m}}^{\rm ord}:=\biggl(\varprojlim_r{\rm Ta}_p(\mathfrak{J}_r^{\rm ord})\biggr)\otimes_{\mathbb{T}_\infty^{\rm ord}}\mathbb{T}_{\infty,\mathfrak{m}}^{\rm ord}
\]
is free of rank $2$ over $\mathbb{T}_{\infty,\mathfrak{m}}^{\rm ord}$, and if $\F$ corresponds to the minimal prime $\mathfrak{a}\subset\mathbb{T}_{\infty,\mathfrak{m}}^{\rm ord}$, then there is an isomorphism
\[
T_\F\simeq\mathbf{Ta}_{\mathfrak{m}}^{\rm ord}\otimes_{\mathbb{T}_{\infty,\mathfrak{m}}^{\rm ord}}\mathbb{T}_{\infty,\mathfrak{m}}^{\rm ord}/\mathfrak{a}
\]
as $(\mathbb{T}_{\infty,\mathfrak{m}}^{\rm ord}/\mathfrak{a})[G_\bQ]$-modules.
\end{thm}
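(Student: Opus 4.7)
My approach would reduce the statement to the analogous result on the classical modular-curve tower via an integral Jacquet--Langlands correspondence, then invoke Mazur--Wiles--Hida type freeness on the modular curve side.

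First I would work at finite level $r$. Via the isomorphism (\ref{eq:JL}), the localized ordinary Hecke algebra $\mathbb{T}_{r,\mathfrak{m}}^{\rm ord}$ acting on $\mathfrak{J}_r^{\rm ord}$ is identified with the ordinary $N^-$-new quotient of the classical Hecke algebra on $S_2(\Gamma_{0,1}(N,p^r))$, localized at the corresponding maximal ideal. Classical results of Mazur--Wiles together with Hida's ordinary refinement then show that under hypothesis (i) the localized Hecke algebra is Gorenstein and the $N^-$-new part of the ordinary Tate module of the modular Jacobian is free of rank $2$ over it. This is the input that makes essential use of the irreducibility and $p$-distinguishedness of $\bar\rho_\F$.

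The transfer between the Shimura-curve side and the modular-curve side is the content of an integral Jacquet--Langlands correspondence, and here hypothesis (ii) is indispensable. Building on work of Ribet, Diamond--Taylor, Terracini, and most notably Helm, the assumption that $\bar\rho_\F$ is ramified at every prime $\ell \mid N^-$ with $\ell \equiv \pm 1 \pmod p$ controls the local deformation rings (respectively the local components of $\mathbb{T}_{r,\mathfrak{m}}^{\rm ord}$) at such primes, ruling out the level-raising congruences that would otherwise obstruct the rational JL isomorphism from lifting to an isomorphism of localized Tate modules as Galois--Hecke bimodules. Granting this, one obtains a rank-$2$ freeness statement for $({\rm Ta}_p(\mathfrak{J}_r^{\rm ord}))_{\mathfrak{m}}$ over $\mathbb{T}_{r,\mathfrak{m}}^{\rm ord}$, compatible with the transition maps in $r$.

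Passing to the inverse limit over $r$ using Hida's control theorems would yield that $\mathbf{Ta}_{\mathfrak{m}}^{\rm ord}$ is free of rank $2$ over $\mathbb{T}_{\infty,\mathfrak{m}}^{\rm ord}$. For the final assertion, if $\mathfrak{a} \subset \mathbb{T}_{\infty,\mathfrak{m}}^{\rm ord}$ is the minimal prime corresponding to $\F$, then the quotient $\mathbf{Ta}_{\mathfrak{m}}^{\rm ord}\otimes_{\mathbb{T}_{\infty,\mathfrak{m}}^{\rm ord}}\mathbb{T}_{\infty,\mathfrak{m}}^{\rm ord}/\mathfrak{a}$ is a free rank-$2$ module over $\mathbb{T}_{\infty,\mathfrak{m}}^{\rm ord}/\mathfrak{a}$ carrying a continuous $G_\bQ$-action whose Hecke-theoretic traces agree with those of $\F$; by the standard uniqueness of Galois-stable lattices in the residually irreducible case, it must be isomorphic to $T_\F$. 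The main obstacle in this plan is the integral Jacquet--Langlands input: constructing a Hecke- and Galois-equivariant identification of localized Tate modules at finite level that is compatible with the Hida maps, for which hypothesis (ii) is precisely tailored.
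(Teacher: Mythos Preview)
Your proposal is correct and follows essentially the same route as the paper. The paper's proof is a two-line citation: it invokes \cite[Thm.~3.1]{Fouquet}, which proves the freeness statement assuming a ``mod $p$ multiplicity one'' hypothesis (Prop.~3.7 in \emph{loc.\ cit.}), and then notes that \cite[Cor.~8.11]{helm} supplies this hypothesis under the ramification condition (ii). What you have written is precisely an unpacking of those two citations: the Mazur--Wiles/Hida freeness and Gorenstein input under (i), Helm's integral Jacquet--Langlands comparison under (ii), the passage to the limit via Hida theory, and the identification of $T_\F$ by uniqueness of lattices in the residually irreducible case are exactly the ingredients of Fouquet's argument. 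One small terminological point: the condition you describe as ``ruling out level-raising congruences'' is, in the references the paper uses, phrased as a mod $p$ multiplicity one statement for the Shimura-curve Jacobian; you may want to name it that way to align with the literature.
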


\begin{proof}
This is shown in \cite[Thm.~3.1]{Fouquet} assuming  the ``mod $p$ multiplicity one'' hypothesis in [\emph{loc.cit.}, Prop.~3.7]. Since by \cite[Cor.~8.11]{helm} that hypothesis is ensured by our ramification condition on $\bar\rho_{\F}$, the result follows.  
\end{proof}

Let $\mathfrak{m}\subset\mathbb{T}_\infty^{\rm ord}$ be a maximal ideal satisfying the hypotheses of Theorem~\ref{thm:helm}, and suppose that the Hida family $\F$ corresponds to a minimal prime of $\mathbb{T}_{\infty,\mathfrak{m}}^{\rm ord}$, so by Theorem~\ref{thm:helm} there is a quotient map $\mathbf{Ta}_{\mathfrak{m}}^{\rm ord}\rightarrow T_\F$. Note also that immediately from the definitions there are natural maps 
${\rm Ta}_p(\mathfrak{J}_r^\dagger)\rightarrow\mathbf{Ta}_{\mathfrak{m}}^{\rm ord}\otimes\Theta^{-1}\rightarrow T_\F^\dagger$.

\begin{defn}
	The \emph{big Heegner point} of conductor $p^t$ is the class 
	\[
	\mathfrak{X}_{p^t}\in\rH^1(H_{p^t},T_{\F}^\dagger)
	\]
	given by the image of $\varprojlim_rU_p^{-r}\cdot\mathfrak{X}_{p^t,r}$ under the composite map
	\[
	\varprojlim_r\rH^1(\mathfrak{G}_{H_{p^t}},{\rm Ta}_p(\mathfrak{J}_{r}^\dagger))
	\rightarrow\rH^1(\mathfrak{G}_{H_{p^t}},\mathbf{Ta}_{\mathfrak{m}}^{\rm ord}\otimes\Theta^{-1})
	\rightarrow\rH^1(H_{p^t},T_{\F}^\dagger).
	\]
\end{defn}

We conclude this section with the following result essentially due to Howard, showing that the big Heegner points are Selmer classes under mild hypotheses.

\begin{prop}\label{prop:HPinSel}
Assume that $\bar{\rho}_{\F}$ is ramified at every prime $\ell\vert N^-$. 
Then the classes $\mathfrak{X}_{p^t}$ lie in ${\rm Sel}_{\tt Gr}(H_{p^t},T_{\F}^\dagger)$.
\end{prop}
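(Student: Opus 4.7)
The plan is to mimic the argument of Howard \cite[Prop.~2.4.5]{howard-invmath} (the case $N^-=1$), incorporating the adjustments needed for Shimura curves as carried out by Longo--Vigni in \cite{LV}. Concretely, we must check that for every place $v$ of $H_{p^t}$ the local class ${\rm loc}_v(\mathfrak{X}_{p^t})$ lies in $\rH^1_{\tt Gr}(H_{p^t,v},T_{\F}^\dagger)$; that is, unramified at $v\nmid p$, and contained in $\ker\{\rH^1(H_{p^t,v},T_\F^\dagger)\to\rH^1(H_{p^t,v}^{\rm ur},\fil^-T_\F^\dagger)\}$ at $v\mid p$. By construction $\mathfrak{X}_{p^t}$ is the image of an inverse system of Kummer classes attached to divisors on the Shimura curves $X_r$, so I would argue at each finite level $r$ and pass to the limit, using that all local conditions involved are preserved under the limit, the twist by $\Theta^{-1}$, and the specialization $\mathbf{Ta}_{\mathfrak m}^{\rm ord}\to T_\F$ furnished by Theorem~\ref{thm:helm}.

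For places $v\nmid Np$ the argument is standard: the Shimura curve $X_r$ has good reduction at $v$ (since $v$ is prime to $N^-={\rm disc}(B)$ and to the level), so each Jacobian $\mathfrak{J}_r^{\rm ord}$ has good reduction at $v$, and the Kummer classes $\mathfrak{X}_{p^t,r}$ are therefore unramified at $v$. At places $v\mid p$, the CM points $P_{s,r}$ lie in the ordinary locus (because $p$ splits in $\cK$), so under the Kummer map the associated classes land in the image of the connected formal group, which on the level of Galois representations corresponds to $\fil^+ T_\F^\dagger$. Since the projection $\fil^+ T_\F^\dagger\to\fil^-T_\F^\dagger$ is zero, the Greenberg condition at $v\mid p$ is satisfied automatically.

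The remaining and more delicate case is places $v\mid N$. For $v\mid N^+$, the prime $v$ splits in $\cK$, so the corresponding place of $\cK$ has $\cO_{\cK,v}\simeq\bZ_v$ and the Heegner level structure is \'etale there; the N\'eron model argument together with the $N^+$-level description of $U_r$ then gives the unramified condition. For $v\mid N^-$ the local Galois cohomology can a priori be nonzero, and here the hypothesis that $\bar\rho_\F$ is ramified at every prime $\ell\mid N^-$ enters. Following \cite[Prop.~2.4.5]{howard-invmath} and \cite[\S{4}]{LV}, the ramification hypothesis, combined with the fact that $\ell\Vert N^-$ is squarefree so the local representation is a (ramified) unramified twist of the Steinberg, forces $\rH^1(I_v,T_\F^\dagger)$ (resp. the ramified quotient of $\rH^1(H_{p^t,v},T_\F^\dagger)$) to be trivial, so the unramified condition holds automatically.

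The main obstacle is the case $v\mid N^-$: this is where one genuinely uses the new ramification hypothesis on $\bar\rho_\F$, and where the argument departs from the purely geometric good-reduction reasoning used at other primes. The key input is the mod $p$ local structure of $\rho_\F$ at primes of $N^-$ (ensuring that $(T_\F^\dagger)^{I_v}$ is controlled), which when combined with Helm's mod $p$ multiplicity one result \cite[Cor.~8.11]{helm} already used in Theorem~\ref{thm:helm} upgrades Howard's original argument from the modular curve setting to the Shimura curve setting needed here.
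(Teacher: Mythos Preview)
Your overall plan matches the paper's: invoke Howard's argument (in the Longo--Vigni Shimura-curve version) for all places away from $N^-$, and treat $v\mid N^-$ separately using the ramification hypothesis. The places $v\nmid N^-$ are fine. The gap is in your treatment of $v\mid N^-$.

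You assert that the ramification hypothesis forces $\rH^1(I_v,T_\F^\dagger)$, or the singular quotient of $\rH^1(H_{p^t,v},T_\F^\dagger)$, to be \emph{trivial}. This is false. At a Steinberg prime $\ell\mid N^-$ with $\bar\rho_\F$ ramified, tame inertia acts on $T_\F^\dagger$ through $\smallmat{1}{c}{0}{1}$ with $c:I_\ell^{\rm tame}\to\cR$ surjective; a direct computation gives $\rH^1(I_\ell,T_\F^\dagger)\simeq(T_\F^\dagger)_{I_\ell}\simeq\cR$, which is nonzero. What the ramification hypothesis buys you is not vanishing but $\cR$-\emph{torsion-freeness} of this module (if $\bar\rho_\F$ were unramified at $\ell$, the image of $c$ would lie in $\mathfrak{m}_\cR$ and the coinvariants would acquire $\cR$-torsion). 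So the unramified condition cannot be read off from the local Galois structure alone.

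The paper's argument has two genuinely separate inputs at $w\mid\ell\mid N^-$. First, the geometric argument of \cite[Prop.~2.4.5]{howard-invmath} and \cite[Prop.~10.1]{LV}, using the special fiber of $X_r$ at $\ell$, shows that ${\rm loc}_w(\mathfrak{X}_{p^t})$ lies in the kernel of
\[
\rH^1(H_{p^t,w},T_\F^\dagger)\longrightarrow \rH^1(H_{p^t,w}^{\rm ur},T_\F^\dagger)\big/\rH^1(H_{p^t,w}^{\rm ur},T_\F^\dagger)_{\rm tors},
\]
i.e.\ its image upstairs is $\cR$-torsion. Second, since $\ell$ is inert in $\cK$ one has $H_{p^t,w}=\cK_\ell$, and the ramification hypothesis on $\bar\rho_\F$ then makes $\rH^1(\cK_\ell^{\rm ur},T_\F^\dagger)$ torsion-free (see \cite[Lem.~3.12]{buy-bigHP}), whence that torsion image is zero. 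Your write-up collapses these two steps into a single vanishing claim and thereby omits the geometric step entirely. Finally, Helm's multiplicity-one result is not relevant here; it was used in Theorem~\ref{thm:helm} for the freeness of $\mathbf{Ta}_{\mathfrak m}^{\rm ord}$, not for the local condition at $N^-$.
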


\begin{proof}
The argument in \cite[Prop.~2.4.5]{howard-invmath} (see also \cite[Prop.~10.1]{LV}) shows that for every prime $w$ of $H_{p^t}$ the localization
${\rm loc}_w(\mathfrak{X}_{p^t})$ lies in the subspace $\rH_{\tt Gr}^1(H_{p^t,w},T_{\F}^\dagger)\subset\rH^1(H_{p^t,w},T_{\F}^\dagger)$ defining ${\rm Sel}_{\tt Gr}(H_{p^t},T_{\F}^\dagger)$, except when $w\vert\ell\vert N^-$, in which case it is shown that
\[
{\rm loc}_{w}(\mathfrak{X}_{p^t})\in{\rm ker}\biggl\{\rH^1(H_{p^t,w},T_{\F}^\dagger)\rightarrow\frac{\rH^1(H_{p^t,w}^{\rm ur},T_{\F}^\dagger)}{\rH^1(H_{p^t,w}^{\rm ur},T_{\F}^\dagger)_{\rm tors}}\biggr\},
\]
where $\rH^1(H_{p^t,w}^{\rm ur},T_{\F}^\dagger)_{\rm tors}$ denotes the $\cR$-torsion submodule of $\rH^1(H_{p^t,w}^{\rm ur},T_{\F}^\dagger)$. However, 
such primes $\ell$ are inert in $\cK$, so 
$H_{p^t,w}=\cK_\ell$, and since our hypothesis on $\bar\rho_\F$ implies that $\rH^1(\cK_\ell^{\rm ur},T_{\F}^\dagger)$ is $\cR$-torsion free (see e.g. \cite[Lem.~3.12]{buy-bigHP}), the result follows.
\end{proof}


Recall that $\cK_\infty^\ac=\cup_n \cK_n^\ac$ is the anticyclotomic $\bZ_p$-extension of $\cK$, with $\cK_n^\ac$ denoting the subextension of $\cK_\infty^\ac$ with $[\cK_n^\ac:\cK]=p^n$. Similarly as in \cite[\S{3.3}]{howard-invmath} and \cite[\S{10.3}]{LV}, 
we set
\[
\mathfrak{Z}_n:={\rm Cor}_{H_{p^t}/\cK^\ac_n}(U_p^{-t}\cdot\mathfrak{X}_{p^t})\in\rH^1(\cK_n^\ac,T_{\F}^\dagger),
\]
where $t\gg 0$ is chosen so that $\cK_n^\ac\subset H_{p^t}$. By horizontal compatibility, the definition of $\mathfrak{Z}_n$ is independent of the choice of $t$, and for varying $n$ they define a system
\[
\mathfrak{Z}_\infty:=\{\mathfrak{Z}_n\}_n\in\varprojlim_n\rH^1(\cK_n^\ac,T_{\F}^\dagger)\simeq\rH_{}^1(\cK,\mathbf{T}^{\dagger,\ac})
\]
which is not $\cR[[\Gamma_\cK^\ac]]$-torsion  
by the work of Cornut--Vatsal \cite{CV-dur} (see also \cite[Cor.~3.1.2]{howard-invmath}).

\section{Main results} \label{sec:main}

In this section we conclude the proof of the main results of this paper.   
Fix a prime $p>3$ and let 
\[
\F=\sum_{n=1}^\infty\boldsymbol{a}_nq^n\in\cR[[q]]
\] 
be a primitive 
Hida family of tame level $N$, and let $\cK$ be an imaginary quadratic field of discriminant prime to $Np$ satisfying the generalized Heegner hypothesis (\ref{ass:gen-H}) relative to $N$. 
Our results will require some of the technical hypotheses below, which we record here for our later reference.

\begin{itemize}
\item[(h0)] $\cR$ is regular,
\item[(h1)]{} some specialization $\F_\phi$ is the $p$-stabilization of a newform $f\in S_2(\Gamma_0(N))$,
\item[(h2)] $\bar{\rho}_{\F}$ is irreducible, 
\item[(h3)] $N$ is squarefree,
\item[(h4)] $N^-\neq 1$, 
\item[(h5)] $\bar{\rho}_{\F}$ is ramified at every prime $\ell\vert N^-$,
\item[(h6)] $p$ splits in $\cK$.
\end{itemize}
As usual, here $N^-$ denotes the largest factor of $N$ divisible only by primes which are inert in $\cK$. 

\subsection{Converse to a theorem of Howard}

As shown by Howard \cite[\S\S{2.3-4}]{howard-invmath},  
for varying $c$ prime to $N$ the big Heegner points $\mathfrak{X}_c\in\rH^1(H_c,T_{\F}^\dagger)$ form an anticyclotomic Euler system for $T_{\F}^\dagger$. Setting
\[
\mathfrak{Z}_0:={\rm Cor}_{H_1/\cK}(\mathfrak{X}_1)\in\rH^1(\cK,T_{\F}^\dagger),
\]
Kolyvagin's methods thus yield a proof of the implication
\[
\mathfrak{Z}_0\not\in{\rm Sel}_{\tt Gr}(\cK,T_{\F}^\dagger)_{\rm tors}
\quad\Longrightarrow\quad
{\rm rank}_\cR\;{\rm Sel}_{\tt Gr}(\cK,T_{\F}^\dagger)=1,
\]
where the subscript ${\rm tors}$ denotes the $\cR$-torsion submodule 
(see \cite[Cor.~3.4.3]{howard-invmath}). 

In the spirit of Skinner's celebrated converse to the theorem of Gross--Zagier and Kolyvagin \cite{skinner}, in this section we  prove a result in the converse direction (see Theorem~\ref{thm:converse-How} below).  

Following the approach of \cite{wan}, this will be deduced from progress on the Iwasawa main conjecture for big Heegner points (\cite[Conj.~3.3.1]{howard-invmath}) 
exploiting the non-triviality of $\mathfrak{Z}_\infty$. 





\begin{thm}\label{thm:HP-MC}
Assume hypotheses {\rm (h0)--(h6)}. Then both $X_{\tt Gr}(\cK_\infty^\ac,A_\F^\dagger)$ and ${\rm Sel}_{\tt Gr}(\cK,\Tc)$ have $\cR[[\Gamma_\cK^\ac]]$-rank one, and 
	\[
	{\rm Char}_{\cR[[\Gamma_\cK^\ac]]}(X_{\tt Gr}(\cK_\infty^\ac,A_\F^\dagger)_{\rm tors})=
	{\rm Char}_{\cR[[\Gamma_\cK^\ac]]}\biggl(\frac{{\rm Sel}_{\tt Gr}(\cK,\Tc)}{\cR[[\Gamma_\cK^\ac]]\cdot\mathfrak{Z}_\infty}\biggr)^2,
	\]
where the subscript {\rm tors} denotes the $\cR[[\Gamma_\cK^\ac]]$-torsion submodule.
\end{thm}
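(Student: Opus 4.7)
The plan is to derive Theorem~\ref{thm:HP-MC} from the anticyclotomic equivalence (i')$\Leftrightarrow$(ii') of Theorem~\ref{thm:2-varIMC}, combined with the known divisibilities in the BDP main conjecture and an explicit reciprocity law identifying the $\pp$-localization of $\mathfrak{Z}_\infty$ with $\mathscr{L}^{\tt BDP}_\pp(\F/\cK)$.

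The first step is to establish statement (ii') of Theorem~\ref{thm:2-varIMC} under hypotheses (h0)--(h6):
\[
{\rm Char}_{\cR[[\Gamma_\cK^\ac]]}(X_{\emptyset,0}(\cK_\infty^\ac,A_\F^\dagger))\cdot\cR^{\rm ur}[[\Gamma_\cK^\ac]]=(\mathscr{L}^{\tt BDP}_\pp(\F/\cK)^2)
\]
up to powers of $p$. One divisibility is provided by Wan's work on the three-variable IMC \cite{wanIMC} (anticyclotomically specialized via Corollary~\ref{cor:wan-bdp}); the opposite divisibility should follow from Fouquet's generalization \cite{Fouquet} of Howard's Euler system argument applied to $\mathcal{BF}^{\dagger,\ac}$---whose localization at $\pp$ is non-torsion by Corollary~\ref{cor:str-Gr}---together with the equivalence (i')$\Leftrightarrow$(ii') of Theorem~\ref{thm:2-varIMC}.

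Next, I would invoke the explicit reciprocity law for big Heegner points of \cite{cas-hsieh1}, suitably extended to the Hida family setting along the lines of the proof of Theorem~\ref{thm:bdp}, to show that
\[
{\rm Col}^{(2),\dagger}_\ac({\rm loc}_\pp(\mathfrak{Z}_\infty))=\mathscr{L}^{\tt BDP}_\pp(\F/\cK)
\]
up to a unit in $\cR^{\rm ur}[[\Gamma_\cK^\ac]]^\times$. Together with Corollary~\ref{cor:wan-bdp}, this shows that ${\rm loc}_\pp(\mathfrak{Z}_\infty)$---and hence $\mathfrak{Z}_\infty$ itself, which lies in ${\rm Sel}_{\tt Gr}(\cK,\Tc)$ by Proposition~\ref{prop:HPinSel}---is non-torsion. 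Combined with the Fouquet--Howard Euler system rank bound, this forces ${\rm rank}_{\cR[[\Gamma_\cK^\ac]]}{\rm Sel}_{\tt Gr}(\cK,\Tc)=1$, from which the corresponding rank assertion for $X_{\tt Gr}(\cK_\infty^\ac,A_\F^\dagger)$ follows via the anticyclotomic analogue of Lemma~\ref{lem:eq-ranks}.

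For the equality of characteristic ideals, I would combine the Poitou--Tate sequence
\begin{align*}
0\to{\rm Sel}_{{\tt Gr},\emptyset}(\cK,\Tc)\xrightarrow{{\rm loc}_\pp}
&\rH^1_{\tt Gr}(\cK_{\pp},\Tc)\\
&\to X_{\emptyset,0}(\cK_\infty^\ac,A_\F^\dagger)\to X_{{\tt Gr},0}(\cK_\infty^\ac,A_\F^\dagger)\to 0
\end{align*}
(where the left-most term ${\rm Sel}_{0,\emptyset}(\cK,\Tc)$ vanishes by Lemma~\ref{lem:no-tors}) with the analogous exact sequence relating $X_{\tt Gr}$ and $X_{{\tt Gr},0}$ via $\rH^1(\cK_\ppbar,\mathscr{F}^-\Tc)$. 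Step~2 then identifies the characteristic ideal of the cokernel of $\cR[[\Gamma_\cK^\ac]]\cdot{\rm loc}_\pp(\mathfrak{Z}_\infty)$ in $\rH^1_{\tt Gr}(\cK_\pp,\Tc)$ with $(\mathscr{L}^{\tt BDP}_\pp(\F/\cK))$ up to a unit (after cancelling the common congruence factor of $\G$), and combining with Step~1 yields
\[
{\rm Char}(X_{\tt Gr}(\cK_\infty^\ac,A_\F^\dagger)_{\rm tors})={\rm Char}({\rm Sel}_{\tt Gr}(\cK,\Tc)/\cR[[\Gamma_\cK^\ac]]\mathfrak{Z}_\infty)^2
\]
up to powers of $p$; a final Lemma~\ref{lem:3.2}-style argument (using regularity of $\cR$ from (h0) together with the non-degeneracy of the Coleman maps of Theorem~\ref{thm:Col}) would eliminate these $p$-power factors. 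The main obstacle I anticipate is precisely this last upgrade from divisibility up to $p$-powers to the integral equality stated, together with the careful matching of the Wan and Fouquet divisibilities to yield genuine equality in Step~1.
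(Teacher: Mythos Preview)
Your strategy is in the right spirit but takes a considerably more circuitous route than the paper, and there are two concrete issues.

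First, the paper's proof is much more direct. It does \emph{not} pass through statement (ii') of Theorem~\ref{thm:2-varIMC} at all; in fact the anticyclotomic BDP main conjecture (your Step~1) is established only \emph{after} Theorem~\ref{thm:HP-MC}, in Theorem~\ref{thm:3-IMC-BDP}. Instead, the paper argues as follows: the non-torsionness of $\mathfrak{Z}_\infty$ is already known from Cornut--Vatsal (no explicit reciprocity law needed), and Fouquet's Euler-system result \cite[Thm.~B(iii)]{Fouquet} applied to the Heegner point system $\mathfrak{Z}_\infty$ directly yields the rank statements together with the divisibility
\[
{\rm Char}_{\cR[[\Gamma_\cK^\ac]]}(X_{\tt Gr}(\cK_\infty^\ac,A_\F^\dagger)_{\rm tors})\;\supset\;
{\rm Char}_{\cR[[\Gamma_\cK^\ac]]}\biggl(\frac{{\rm Sel}_{\tt Gr}(\cK,\Tc)}{\cR[[\Gamma_\cK^\ac]]\cdot\mathfrak{Z}_\infty}\biggr)^2.
\]
The opposite divisibility is then obtained by the specialization trick: at the single arithmetic prime $\phi$ provided by (h1), the full equality is already known for $f=\F_\phi$ by \cite[Thm.~3.4]{cas-BF} (or \cite[Thm.~1.2]{wan}); one then applies Lemma~\ref{lem:3.2} with $\mathfrak{a}=\ker(\phi)\cR[[\Gamma_\cK^\ac]]$ to upgrade the family-level divisibility to an equality. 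This avoids both the family-level reciprocity law for $\mathfrak{Z}_\infty$ you invoke in Step~2 and any Poitou--Tate bookkeeping.

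Second, in your Step~1 you write that the opposite divisibility ``should follow from Fouquet's generalization of Howard's Euler system argument applied to $\mathcal{BF}^{\dagger,\ac}$''. This is a misattribution: Fouquet's work \cite{Fouquet} is specifically for the big Heegner point Kolyvagin system, and $\mathcal{BF}^{\dagger,\ac}$ is a single anticyclotomic class rather than a Kolyvagin system over ring class fields, so one cannot feed it into that machine. What you presumably intend is to apply Fouquet to $\mathfrak{Z}_\infty$ and then use your Step~2 reciprocity law to compare $\mathfrak{Z}_\infty$ with $\mathcal{BF}^{\dagger,\ac}$ inside ${\rm Sel}_{\tt Gr}(\cK,\Tc)={\rm Sel}_{{\tt Gr},\emptyset}(\cK,\Tc)$; that would work in principle, but the family-level identity ${\rm Col}^{(2),\dagger}_\ac({\rm loc}_\pp(\mathfrak{Z}_\infty))=\mathscr{L}^{\tt BDP}_\pp(\F/\cK)$ is not established in this paper, and your proposed use of Lemma~\ref{lem:3.2} to remove the residual $p$-powers lacks a specified ideal $\mathfrak{a}$ and a known equality modulo $\mathfrak{a}$. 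The paper's specialization argument supplies exactly that missing input.
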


\begin{proof}
Since $\mathfrak{Z}_\infty$ is not $\cR[[\Gamma_\cK^\ac]]$-torsion,  
 part (iii) of  \cite[Thm.~B]{Fouquet} implies that $X_{\tt Gr}(\cK_\infty^\ac,A_\F^\dagger)$ and ${\rm Sel}_{\tt Gr}(\cK,\Tc)$ have $\cR[[\Gamma_\cK^\ac]]$-rank one, and that the divisibility
\begin{equation}\label{eq:fouquet}
	{\rm Char}_{\cR[[\Gamma_\cK^\ac]]}(X_{\tt Gr}(\cK_\infty^\ac,A_\F^\dagger)_{\rm tors})\supset
	{\rm Char}_{\cR[[\Gamma_\cK^\ac]]}\biggl(\frac{{\rm Sel}_{\tt Gr}(\cK,\Tc)}{\cR[[\Gamma_\cK^\ac]]\cdot\mathfrak{Z}_\infty}\biggr)^2
\end{equation}
holds in $\cR[[\Gamma_\cK^\ac]]$. Concerning the additional hypotheses in Fouquet's result, we note that:
\begin{itemize}
		\item Assumption~3.4, that $\bar\rho_\F$ is irreducible, is our (h2),
		\item Assumption~3.5, that $\bar\rho_\F$ is $p$-distinguished, follows from (h1) (see \cite[Rem.~7.2.7]{KLZ2}),
		\item Assumption~3.10, that the tame character of $\F$ admits a square-root, is satisfied by (h1),
		\item Assumption~5.10, that all primes $\ell\vert N$ for which $\bar\rho_\F$ is not ramified have infinite decomposition group in $\cK_\infty^\ac/\cK$, is a reformulation of (h5),
		\item Assumption~5.13, that $\bar{\rho}_{\F}\vert_{G_\cK}$ is irreducible, follows from (h2), (h4) and (h5) (see \cite[Lem.~2.8.1]{skinner}).
\end{itemize}

Let $\phi\in\mathcal{X}_a(\cR)$ be such that $\F_\phi$ is the ordinary $p$-stabilization of a newform $f\in S_2(\Gamma_0(N))$  as in hypothesis (h1). 
Letting $X\supset Y$ stand for the divisibility $(\ref{eq:fouquet})$, by \cite[Thm.~3.4]{cas-BF} (see also \cite[Thm.~1.2]{wan}) we have the equality
\[
X=Y\pmod{{\rm ker}(\phi)\cR[[\Gamma_\cK^\ac]]},
\]
(note that this requires the additional hypotheses (h3) and (h6)), from where the result follows by an application of Lemma~\ref{lem:3.2}. 
\end{proof}

\begin{thm}\label{thm:converse-How}
Assume hypotheses {\rm (h0)--(h6)}. Then the following implication holds: 
\[
{\rm rank}_\cR\;{\rm Sel}_{\tt Gr}(\cK,T_{\F}^\dagger)=1\quad\Longrightarrow\quad
\mathfrak{Z}_0\not\in{\rm Sel}_{\tt Gr}(\cK,T_{\F}^\dagger)_{\rm tors}
\]
where the subscript ${\rm tors}$ denotes the $\cR$-torsion submodule.
\end{thm}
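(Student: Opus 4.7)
The plan is to argue by contradiction. Set $A:=\cR[[\Gamma_\cK^\ac]]$, let $\mathfrak{a}\subset A$ denote the augmentation ideal, and write
\[
f_{\mathfrak{Z}}:={\rm Char}_A\biggl(\frac{{\rm Sel}_{\tt Gr}(\cK,\Tc)}{A\cdot\mathfrak{Z}_\infty}\biggr).
\]
Supposing the conclusion fails, I would compare the two sides of the identity
\[
{\rm Char}_A(X_{\tt Gr}(\cK_\infty^\ac,A_\F^\dagger)_{\rm tors})=f_{\mathfrak{Z}}^2
\]
supplied by Theorem~\ref{thm:HP-MC} after reduction modulo $\mathfrak{a}$, showing that the rank-one hypothesis forces $f_{\mathfrak{Z}}$ not to lie in $\mathfrak{a}$ while the torsion of $\mathfrak{Z}_0$ forces it to lie in $\mathfrak{a}$.

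For the first implication, the rank-one hypothesis together with Lemma~\ref{lem:eq-ranks} gives ${\rm rank}_\cR\;X_{\tt Gr}(\cK,A_\F^\dagger)=1$. By the standard control theorem for Pontryagin duals of Greenberg Selmer groups along an anticyclotomic $\bZ_p$-tower (cf.~\cite[Prop.~12.7.13.4]{nekovar310}), the kernel and cokernel of the natural map
\[
X_{\tt Gr}(\cK_\infty^\ac,A_\F^\dagger)/\mathfrak{a}X_{\tt Gr}(\cK_\infty^\ac,A_\F^\dagger)\longrightarrow X_{\tt Gr}(\cK,A_\F^\dagger)
\]
are $\cR$-torsion, the relevant local $\rH^0$-terms being killed by the irreducibility of $\bar\rho_\F\vert_{G_\cK}$ (ensured by (h2), (h4), (h5) via \cite[Lem.~2.8.1]{skinner}). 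Theorem~\ref{thm:HP-MC} gives that $X_{\tt Gr}(\cK_\infty^\ac,A_\F^\dagger)$ has $A$-rank one, so it is pseudo-isomorphic to $A\oplus\bigoplus_iA/(g_i)$ for some $g_i\in A$, and its quotient by $\mathfrak{a}$ has $\cR$-rank $1+\#\{i\,\colon\,g_i\in\mathfrak{a}\}$. Matching this with the rank one of $X_{\tt Gr}(\cK,A_\F^\dagger)$ forces every $g_i$ to lie outside $\mathfrak{a}$, hence ${\rm Char}_A(X_{\tt Gr}(\cK_\infty^\ac,A_\F^\dagger)_{\rm tors})$ is not contained in $\mathfrak{a}$, and neither therefore is $f_{\mathfrak{Z}}$.

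For the second implication, Shapiro's lemma identifies ${\rm Sel}_{\tt Gr}(\cK,\Tc)\simeq\varprojlim_n{\rm Sel}_{\tt Gr}(\cK_n^\ac,T_\F^\dagger)$, and projection to the base layer yields an $\cR$-linear map
\[
\pi\colon{\rm Sel}_{\tt Gr}(\cK,\Tc)/\mathfrak{a}\,{\rm Sel}_{\tt Gr}(\cK,\Tc)\longrightarrow{\rm Sel}_{\tt Gr}(\cK,T_\F^\dagger)
\]
whose kernel and cokernel are $\cR$-torsion (this being the inverse-limit analogue of the preceding control statement, valid under the same hypotheses) and which sends the image $\bar{\mathfrak{Z}}_\infty$ of $\mathfrak{Z}_\infty$ to $\mathfrak{Z}_0$. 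If $\mathfrak{Z}_0$ is $\cR$-torsion, the torsion nature of $\ker(\pi)$ forces $\bar{\mathfrak{Z}}_\infty$ to be $\cR$-torsion, while the torsion cokernel together with the rank-one hypothesis on ${\rm Sel}_{\tt Gr}(\cK,T_\F^\dagger)$ yields ${\rm rank}_\cR\bigl({\rm Sel}_{\tt Gr}(\cK,\Tc)/\mathfrak{a}\,{\rm Sel}_{\tt Gr}(\cK,\Tc)\bigr)=1$. Tensoring the short exact sequence $0\to A\cdot\mathfrak{Z}_\infty\to{\rm Sel}_{\tt Gr}(\cK,\Tc)\to{\rm Sel}_{\tt Gr}(\cK,\Tc)/A\cdot\mathfrak{Z}_\infty\to 0$ with $\cR$ and tracking ranks then gives
\[
{\rm rank}_\cR\bigl(({\rm Sel}_{\tt Gr}(\cK,\Tc)/A\cdot\mathfrak{Z}_\infty)\otimes_A\cR\bigr)=1-0=1,
\]
and since for any finitely generated $A$-torsion module $Q$ the reduction $Q\otimes_A\cR$ has positive $\cR$-rank precisely when its characteristic ideal is contained in $\mathfrak{a}$, I conclude that $f_{\mathfrak{Z}}$ lies in $\mathfrak{a}$, the desired contradiction.

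The main technical point will be securing the $\cR$-torsion (rather than merely pseudo-null) bounds on the kernel and cokernel of both control maps, which is what drives the two rank calculations. For the Pontryagin dual this reduces in a routine way to the vanishing of $\rH^0$ of $A_\F^\dagger$ over the completions of $\cK$ at the bad places, a consequence of our residual irreducibility hypotheses; the inverse-limit variant is then obtained by the same mechanism via Shapiro's lemma applied to the anticyclotomic tower.
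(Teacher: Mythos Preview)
Your proposal is correct and follows essentially the same route as the paper: both use the control map for $X_{\tt Gr}$ along the anticyclotomic tower together with Lemma~\ref{lem:eq-ranks} and Theorem~\ref{thm:HP-MC} to show that the rank-one hypothesis forces $\mathfrak{a}\nmid f_{\mathfrak{Z}}$, and then conclude that the image of $\mathfrak{Z}_\infty$ in ${\rm Sel}_{\tt Gr}(\cK,\Tc)/\mathfrak{a}$, hence $\mathfrak{Z}_0$, is not $\cR$-torsion.

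The only difference is in the second half: the paper cites \cite[Cor.~3.8]{SU} as a black box for the implication ``$\mathfrak{a}\nmid f_{\mathfrak{Z}}\Rightarrow\bar{\mathfrak{Z}}_\infty$ not $\cR$-torsion'', whereas you reprove its content in contrapositive via a rank count. Your stated ``main technical point'' is lighter than you fear. The kernel of $\pi$ is not merely $\cR$-torsion but zero: the long exact sequence attached to $0\to\Tc\xrightarrow{\gamma_\ac-1}\Tc\to T_\F^\dagger\to 0$ shows $\ker(\pi)$ is a quotient of $\rH^0(\cK,T_\F^\dagger)$, which vanishes by irreducibility of $\bar\rho_\F\vert_{G_\cK}$ (this is exactly the injection the paper uses). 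And you do not need the cokernel of $\pi$ to be $\cR$-torsion at all: ${\rm rank}_\cR\bigl({\rm Sel}_{\tt Gr}(\cK,\Tc)/\mathfrak{a}\bigr)=1$ follows directly from ${\rm Sel}_{\tt Gr}(\cK,\Tc)$ having $A$-rank one (Theorem~\ref{thm:HP-MC}) and being $A$-torsion-free (Lemma~\ref{lem:no-tors}), by localizing at the height-one prime $\mathfrak{a}$ of the regular ring $A$. So the rank computation you need goes through without any inverse-limit control theorem.
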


\begin{proof}
Let $\gamma_\ac\in\Gamma_\cK^{\ac}$ be a topological generator. The  restriction map for the extension $\cK_\infty^\ac/\cK$ induces a surjective homomorphism
\[
X_{\tt Gr}(\cK_\infty^\ac,A_\F^\dagger)/(\gamma_\ac-1)X_{\tt Gr}(\cK_\infty^\ac,A_\F^\dagger)\twoheadrightarrow X_{\tt Gr}(\cK,A_{\F}^\dagger)
\]
with $\cR$-torsion kernel. Since $X_{\tt Gr}(\cK,A_{\F}^\dagger)$ and ${\rm Sel}_{\tt Gr}(\cK,T_{\F}^\dagger)$ have the same $\cR$-rank by Lemma~\ref{lem:eq-ranks}, we thus see from  Theorem~\ref{thm:HP-MC} that our assumption implies that  
	\[
	(\gamma_\ac-1)\nmid{\rm Char}_{\cR[[\Gamma_\cK^\ac]]}\biggl(\frac{{\rm Sel}_{\tt Gr}(\cK,\Tc)}{\cR[[\Gamma_\cK^\ac]]\cdot\mathfrak{Z}_\infty}\biggr).
	\] 
	By \cite[Cor.~3.8]{SU} (with $F=\cK_\infty^\ac)$, it follows that 
	the image of $\mathfrak{Z}_\infty$ in ${\rm Sel}_{\tt Gr}(\cK,\Tc)/(\gamma_{\ac}-1){\rm Sel}_{\tt Gr}(\cK,\Tc)$ is not $\cR$-torsion; since this image is sent to $\mathfrak{Z}_0$ under the natural injection
	\[
	{\rm Sel}_{\tt Gr}(\cK,\Tc)/(\gamma_{\ac}-1){\rm Sel}_{\tt Gr}(\cK,\Tc)\hookrightarrow{\rm Sel}_{\tt Gr}(\cK,T_{\F}^\dagger),
	\]
the result follows.
\end{proof}

\begin{rem}
Replacing the application of \cite[Thm.~3.4]{cas-BF} or \cite[Thm.~1.2]{wan} in the proof of Theorem~\ref{thm:HP-MC} by an application of \cite[Thm.~1.2]{BCK} or \cite[Thm.~1.1.5]{zanarella} the above argument gives a proof of Theorems~\ref{thm:HP-MC}  
and \ref{thm:converse-How}
with hypotheses (h3)--(h6) replaced by ``Hypothesis~$\heartsuit$'' from \cite{zhang-kolyvagin}, i.e.,  
letting ${\rm Ram}(\bar{\rho}_\F)$ be the set of primes $\ell\Vert N$ such that 
$\bar{\rho}_\F$ is ramified at $\ell$: 
	\begin{itemize}
		\item ${\rm Ram}(\bar{\rho}_\F)$ contains all primes $\ell\Vert N^+$, and all primes  $\ell\vert N^-$ such that $\ell\equiv\pm 1 \pmod{p}$, 
		\item If $N$ is not squarefree, then ${\rm Ram}(\bar{\rho}_\F)\neq\emptyset$, and either ${\rm Ram}(\bar{\rho}_\F)$ contains a prime $\ell\vert N^-$ or there are at least two primes $\ell\Vert N^+$,
		\item If $\ell^2\vert N^+$, then $\rH^0(\bQ_\ell,\bar{\rho}_\F)=\{0\}$,
	\end{itemize}
and the assumption that $\bar{\rho}_{\F}$ is surjective and $\boldsymbol{a}_p\not\equiv\pm{1}\pmod{p}$.
\end{rem}

\subsection{Iwasawa--Greenberg main conjectures}

Now we can upgrade the main result 
 in \cite{wanIMC} towards the Iwasawa--Greenberg main conjecture for $\mathscr{L}_\pp(\F/\cK)$ 
to a proof of the full equality. 



\begin{thm}\label{thm:3-IMC-BDP}
Assume hypotheses {\rm (h0)--(h6)}.  
Then $X_{\emptyset,0}(\cK_\infty,A_\F)$ 
is $\cR[[\Gamma_\cK]]$-torsion and
\begin{equation}\label{eq:3-IMC-bdp}
{\rm Char}_{\cR[[\Gamma_\cK]]}(X_{\emptyset,0}(\cK_\infty,A_\F))\cdot\cR^{\rm ur}[[\Gamma_\cK]]=(\mathscr{L}_\pp(\F/\cK))\nonumber
\end{equation}
as ideals in $\cR^{\rm ur}[[\Gamma_\cK]]$.
\end{thm}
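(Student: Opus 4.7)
The strategy is to combine the one-sided divisibility proved in \cite{wanIMC} with the anticyclotomic Heegner main conjecture established in Theorem \ref{thm:HP-MC}, bridging these through the commutative algebra Lemma \ref{lem:3.2}. After applying the twist isomorphism ${\rm tw}_{\Theta^{-1}}$, the theorem is equivalent to statement (ii) of Theorem \ref{thm:2-varIMC} upgraded to a full equality, not merely up to powers of $p$.

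First, I would invoke \cite{wanIMC} to secure the torsionness of $X_{\emptyset,0}(\cK_\infty, A_\F^\dagger)$ together with the Eisenstein-congruence divisibility
\[
{\rm Char}_{\cR[[\Gamma_\cK]]}(X_{\emptyset,0}(\cK_\infty, A_\F^\dagger)) \cdot \cR^{\rm ur}[[\Gamma_\cK]] \subseteq ({\rm tw}_{\Theta^{-1}}\mathscr{L}_\pp(\F/\cK))
\]
in $\cR^{\rm ur}[[\Gamma_\cK]]$. Second, I would translate Theorem \ref{thm:HP-MC} into the anticyclotomic IMC (statement (ii)' of Theorem \ref{thm:2-varIMC}) as a genuine equality: an explicit reciprocity law for big Heegner classes, in the spirit of the $p$-adic Waldspurger formula of \cite{cas-hsieh1}, identifies the image of ${\rm loc}_\pp(\mathfrak{Z}_\infty)$ under the relevant Coleman-type map with $\mathscr{L}^{\tt BDP}_\pp(\F/\cK)$ up to an explicit unit, and a Poitou--Tate diagram chase analogous to that in the proof of Theorem \ref{thm:2-varIMC} then converts Theorem \ref{thm:HP-MC} into
\[
{\rm Char}_{\cR[[\Gamma_\cK^\ac]]}(X_{\emptyset,0}(\cK_\infty^\ac, A_\F^\dagger)) \cdot \cR^{\rm ur}[[\Gamma_\cK^\ac]] = (\mathscr{L}^{\tt BDP}_\pp(\F/\cK)^2).
\]
By Corollary \ref{cor:wan-bdp}, the right-hand side coincides with the anticyclotomic projection of $({\rm tw}_{\Theta^{-1}}\mathscr{L}_\pp(\F/\cK))$ up to a unit. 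Finally, I would apply Lemma \ref{lem:3.2} with $R = \cR^{\rm ur}[[\Gamma_\cK]]$, with $\mathfrak{a}$ the kernel of the natural projection to the anticyclotomic line, with $I$ the characteristic ideal in question, and with $\mathcal{L} = {\rm tw}_{\Theta^{-1}}\mathscr{L}_\pp(\F/\cK)$: the inclusion $I\subseteq(\mathcal{L})$ comes from Step 1, the nonvanishing of $\overline{\mathcal{L}}$ from Corollary \ref{cor:wan-bdp}, and the membership $\overline{\mathcal{L}}\in\overline{I}$ from the anticyclotomic equality of Step 2. The conclusion $I = (\mathcal{L})$ then yields the theorem after untwisting via $\widetilde{\rm tw}_{\Theta^{-1}}$.

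The hardest step will be the second: upgrading the anticyclotomic reduction from an equality ``up to powers of $p$'' (as in Theorem \ref{thm:2-varIMC}) to a strict equality. The squeeze via Lemma \ref{lem:3.2} cannot tolerate any $p$-power ambiguity at the anticyclotomic level, so one must carefully track the congruence ideals, Euler factors at bad primes, and period ratios entering the comparison between $\mathscr{L}^{\tt BDP}_\pp(\F/\cK)^2$ and the anticyclotomic projection of ${\rm tw}_{\Theta^{-1}}\mathscr{L}_\pp(\F/\cK)$. The integrality assertion in Corollary \ref{cor:wan-bdp} combined with the explicit reciprocity laws of \cite{cas-hsieh1} should suffice to certify that all implicit factors are $p$-adic units, but this bookkeeping is the delicate technical heart of the argument.
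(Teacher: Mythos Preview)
Your overall architecture is sound, but your route to the anticyclotomic equality differs from the paper's and is unnecessarily circuitous. You propose to obtain
\[
{\rm Char}_{\cR[[\Gamma_\cK^\ac]]}(X_{\emptyset,0}(\cK_\infty^\ac, A_\F^\dagger)) \cdot \cR^{\rm ur}[[\Gamma_\cK^\ac]] = (\mathscr{L}^{\tt BDP}_\pp(\F/\cK)^2)
\]
by converting Theorem~\ref{thm:HP-MC} (the Heegner main conjecture for $\mathfrak{Z}_\infty$ and $X_{\tt Gr}$) into statement (ii)' of Theorem~\ref{thm:2-varIMC} via a big Heegner reciprocity law relating ${\rm loc}_\pp(\mathfrak{Z}_\infty)$ to $\mathscr{L}^{\tt BDP}_\pp(\F/\cK)$. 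Such a reciprocity law is not proved in this paper, and establishing it with the integral precision you need (unit, not merely unit in $\cR^{\rm ur}[[\Gamma_\cK^\ac]][1/p]$) is exactly the bookkeeping you flag as the ``hardest step''. Moreover, Theorem~\ref{thm:HP-MC} concerns $X_{\tt Gr}$ (rank one) rather than $X_{\emptyset,0}$ (torsion), so the conversion requires further Poitou--Tate manipulation beyond what Theorem~\ref{thm:2-varIMC} provides.

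The paper sidesteps all of this by applying Lemma~\ref{lem:3.2} \emph{twice} rather than once. First, it combines the anticyclotomic projection of the divisibility from \cite[Thm.~1.1]{wanIMC} with the known equality of \cite[Thm.~3.4]{cas-BF} at a \emph{single weight-two specialization} $\phi$ (taking $\mathfrak{a}=\ker(\phi)\cdot\cR^{\rm ur}[[\Gamma_\cK^\ac]]$); this yields the anticyclotomic equality over $\cR^{\rm ur}[[\Gamma_\cK^\ac]]$ directly, with no reference to $\mathfrak{Z}_\infty$ or Theorem~\ref{thm:HP-MC}. Second, it applies Lemma~\ref{lem:3.2} again exactly as in your final step (with $\mathfrak{a}$ the kernel of the anticyclotomic projection) to pass to three variables. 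Since \cite[Thm.~3.4]{cas-BF} is already a strict equality of characteristic ideals in $\cO_\phi^{\rm ur}[[\Gamma_\cK^\ac]]$, no powers of $p$ ever enter, and the issue you identify as the technical heart simply does not arise. Your proposal would work if supplemented with the requisite big Heegner reciprocity law at full integral precision, but the paper's two-step descent is both shorter and self-contained.
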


\begin{proof}
Clearly (see \cite[Lem.~1.2]{Rubin-ES}), it suffices to show that the twisted module $X_{\emptyset,0}(\cK_\infty,A_\F^\dagger)$ is $\cR[[\Gamma_\cK]]$-torsion, with characteristic ideal generated by ${\rm tw}_{\Theta^{-1}}(\mathscr{L}_\pp(\F/\cK))$ after extension of scalars to $\cR^{\rm ur}[[\Gamma_\cK^\ac]]$. This in turn can be shown by a similar argument to that in the  proof of Theorem~\ref{thm:HP-MC}, so we shall be rather brief. 

Taking $\phi\in\mathcal{X}_a(\cR)$ such that $\F_\phi$ is the ordinary $p$-stabilization of a newform $f\in S_2(\Gamma_0(N))$, we deduce that $X_{\emptyset,0}(\cK_\infty^\ac,A_\F^{\dagger,\ac})$ is $\cR[[\Gamma_\cK^\ac]]$-torsion and that the equality 
as ideals in $\cR^{\rm ur}[[\Gamma_\cK^\ac]]$
\begin{equation}\label{eq:BDP-IMC}
{\rm Char}_{\cR[[\Gamma_\cK^\ac]]}(X_{\emptyset,0}(\cK_\infty^\ac,A_\F^{\dagger,\ac}))\cdot\cR^{\rm ur}[[\Gamma_\cK^\ac]]=(\mathscr{L}_\pp^{\tt BDP}(\F/\cK)^2)
\end{equation}
holds by applying Lemma~\ref{lem:3.2} to the combination of the divisibility in \cite[Thm.~1.1]{wanIMC} (projected under $\Gamma_\cK\twoheadrightarrow\Gamma_\cK^\ac$) with the equality in \cite[Thm.~3.4]{cas-BF} for $f=\F_\phi$.  
The $\cR[[\Gamma_\cK]]$-torsionness of $X_{\emptyset,0}(\cK_\infty,A_\F^\dagger)$ then follows from that of $X_{\emptyset,0}(\cK_\infty^\ac,A_\F^\dagger)$ over $\cR[[\Gamma_\cK^\ac]]$, and using the comparison of $p$-adic $L$-functions in Corollary~\ref{cor:wan-bdp}, the three-variable divisibility  
in \cite[Thm.~1.1]{wanIMC} combined with the equality (\ref{eq:BDP-IMC}) 
yields the desired three-variable equality by another application of 
Lemma~\ref{lem:3.2}. 
\end{proof}

We can now deduce from Theorem~\ref{thm:3-IMC-BDP} the proof of Theorem~A in the Introduction.

\begin{cor}\label{thm:3-IMC}
Assume hypotheses {\rm (h0)--(h6)}. 
Then $X_{\tt Gr}(\cK_\infty,A_\F)$ 
is $\cR[[\Gamma_\cK]]$-torsion, and
\begin{equation}\label{eq:3-IMC}
{\rm Char}_{\cR[[\Gamma_\cK]]}(X_{\tt Gr}(\cK_\infty,A_\F))=(L_p^{\tt Hi}(\F/\cK))\nonumber
\end{equation}	
as ideals in $\cR[[\Gamma_\cK]]\otimes_{\bZ_p}\bQ_p$.
\end{cor}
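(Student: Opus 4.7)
The plan is to deduce this corollary by combining Theorem~\ref{thm:3-IMC-BDP} with the chain of equivalences provided by Theorem~\ref{thm:2-varIMC}. Since passing from $A_\F$ to its critical twist $A_\F^\dagger$ (and correspondingly twisting $L_p^{\tt Hi}(\F/\cK)$ by ${\rm tw}_{\Theta^{-1}}$) preserves characteristic ideals---via \cite[Lem.~1.2]{Rubin-ES}, as already invoked at the start of the proof of Theorem~\ref{thm:3-IMC-BDP}---it suffices to establish that $X_{\tt Gr}(\cK_\infty,A_\F^\dagger)$ is $\cR[[\Gamma_\cK]]$-torsion and that
\[
{\rm Char}_{\cR[[\Gamma_\cK]]}(X_{\tt Gr}(\cK_\infty,A_\F^\dagger)) = ({\rm tw}_{\Theta^{-1}}(L_p^{\tt Hi}(\F/\cK)))
\]
as ideals in $\cR[[\Gamma_\cK]]\otimes_{\bZ_p}\bQ_p$. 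This is exactly condition~(iii) of Theorem~\ref{thm:2-varIMC}, where the ``up to powers of $p$'' qualifier becomes vacuous after inverting $p$.

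To invoke Theorem~\ref{thm:2-varIMC}, I first verify that $\bar\rho_\F|_{G_\cK}$ is irreducible: this follows from hypotheses~(h2), (h4), and (h5) together with \cite[Lem.~2.8.1]{skinner}. Next, Theorem~\ref{thm:3-IMC-BDP} asserts that $X_{\emptyset,0}(\cK_\infty,A_\F)$ is $\cR[[\Gamma_\cK]]$-torsion with characteristic ideal generated, after extension of scalars to $\cR^{\rm ur}[[\Gamma_\cK]]$, by $\mathscr{L}_\pp(\F/\cK)$; twisting by $\Theta^{-1}$ (again via \cite[Lem.~1.2]{Rubin-ES}), this is precisely the statement of condition~(ii) of Theorem~\ref{thm:2-varIMC}. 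Applying the equivalence (ii)~$\Leftrightarrow$~(iii) of Theorem~\ref{thm:2-varIMC} then yields the desired equality up to powers of $p$, which becomes an honest equality once we tensor with $\bQ_p$.

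All the substantive technical input---the BDP-type divisibility from \cite{wanIMC}, the anticyclotomic equality from \cite{cas-BF}, the Kings--Loeffler--Zerbes explicit reciprocity laws encapsulated in Theorem~\ref{thm:Col}, and Lemma~\ref{lem:3.2} for passing from divisibilities to equalities via specialization at an arithmetic prime of weight~$2$---has already been absorbed into Theorems~\ref{thm:2-varIMC} and~\ref{thm:3-IMC-BDP}. Consequently, no genuinely new obstacle arises here: the proof of the corollary is a formal consequence of the preceding structural results, the only mild subtlety being the bookkeeping of powers of $p$, which is harmless after inverting $p$.
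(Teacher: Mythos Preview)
Your proposal is correct and follows essentially the same approach as the paper: reduce to the twisted statement via \cite[Lem.~1.2]{Rubin-ES}, then invoke the equivalence (ii)~$\Leftrightarrow$~(iii) of Theorem~\ref{thm:2-varIMC} with input from Theorem~\ref{thm:3-IMC-BDP}. Your added remarks (verifying $\bar\rho_\F\vert_{G_\cK}$ irreducible via (h2), (h4), (h5) and \cite[Lem.~2.8.1]{skinner}, and noting that ``up to powers of $p$'' becomes vacuous after inverting $p$) are helpful details that the paper leaves implicit.
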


\begin{proof}
As in the proof of Theorem~\ref{thm:3-IMC-BDP}, it suffices to show that the twisted module $X_{\tt Gr}(\cK_\infty,A_\F^\dagger)$ is $\cR[[\Gamma_\cK]]$-torsion with characteristic ideal generated by ${\rm tw}_{\Theta^{-1}}(L_p^{\tt Hi}(\F/\cK))$, which by the equivalence between main conjectures in Theorem~\ref{thm:2-varIMC},  follows from Theorem~\ref{thm:3-IMC-BDP}. 
\end{proof}

\subsection{Greenberg's nonvanishing conjecture for derivatives}
\label{sec:appl-greenberg}

As in the Introduction, let $-w\in\{\pm{1}\}$ be the generic sign in the functional equation of the $p$-adic $L$-functions $L_p^{\tt MTT}(\F_\phi,s)$ for varying $\phi\in\mathcal{X}_a^o(\cR)$.

By \cite[Cor.~3.4.3 and Eq.~(21)]{howard-invmath}, Howard's horizontal nonvanishing conjecture implies that
\[
{\rm rank}_{\cR}\;{\rm Sel}_{\tt Gr}(\bQ,T_{\F}^\dagger)=\left\{
\begin{array}{ll}
1&\textrm{if $w=1$,}\\
0&\textrm{if $w=-1$.}
\end{array}
\right. 
\]

In the case $w=-1$, a result in the converse direction follows from \cite{SU}: 

\begin{thm}[Skinner--Urban]\label{thm:Gr+1}
Assume that:
\begin{itemize}
\item{} $\bar{\rho}_{\F}$ is irreducible and $p$-distinguished,
\item{} $\F$ has trivial tame character,
\item{} there is a prime $\ell\Vert N$ such that $\bar{\rho}_{\F}$ is ramified at $\ell$.
\end{itemize}
If ${\rm Sel}_{\tt Gr}(\Q,T_{\F}^\dagger)$ is $\cR$-torsion, then $L(\F_\phi,k_\phi/2)\neq 0$ for all but 
finitely many $\phi\in\mathcal{X}_a^o(\cR)$.
\end{thm}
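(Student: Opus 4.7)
The plan is to deduce this theorem from the cyclotomic Iwasawa--Greenberg main conjecture for the individual specializations $\F_\phi$, combined with a $\phi$-variation control theorem. I would proceed in three steps.

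First, since $\bar\rho_\F\vert_{G_\bQ}$ is irreducible (a consequence of our standing hypothesis), the argument of Lemma~\ref{lem:no-tors} applied with $\cK$ replaced by $\bQ$ shows that $\rH^1(\mathfrak{G}_{\bQ,\Sigma}, T_\F^\dagger)$, and hence ${\rm Sel}_{\tt Gr}(\bQ, T_\F^\dagger)$, is torsion-free over $\cR$. Combined with the hypothesis that it is $\cR$-torsion, this forces ${\rm Sel}_{\tt Gr}(\bQ, T_\F^\dagger) = 0$, and the obvious analog of Lemma~\ref{lem:eq-ranks} over $\bQ$ then gives that $X_{\tt Gr}(\bQ, A_\F^\dagger)$ is $\cR$-torsion. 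A $\phi$-variation control argument, in the style of the one used in the proof of Lemma~\ref{lem:eq-ranks} (based on \cite[Prop.~12.7.13.4]{nekovar310}), then shows that for all but finitely many $\phi \in \mathcal{X}_a^o(\cR)$ the specialization ${\rm Sel}_{\tt Gr}(\bQ, T_{\F_\phi}^\dagger)$ is $\phi(\cR)$-torsion; since $T_{\F_\phi}^\dagger$ is free over $\phi(\cR)$ and $\bar\rho_{\F_\phi} \simeq \bar\rho_\F$ is irreducible, it must in fact vanish.

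Next I would invoke the cyclotomic Iwasawa--Greenberg main conjecture for each such classical eigenform $\F_\phi$: the divisibility of \cite{SU} (whose hypotheses $-$ irreducibility and $p$-distinguishedness of $\bar\rho_{\F_\phi}$, triviality of the tame character, and the existence of a prime $\ell\Vert N$ at which $\bar\rho_{\F_\phi}$ is ramified $-$ are all inherited from our assumptions for almost all $\phi$), combined with Kato's divisibility \cite{Kato295}, yields
$$
{\rm Char}_{\phi(\cR)[[\Gamma]]}\bigl(X_{\rm cyc}(\bQ_\infty^{\rm cyc}, A_{\F_\phi}^\dagger)\bigr) = \bigl(L_p^{\tt MTT}(\F_\phi)\bigr)
$$
up to powers of $p$. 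Mazur's control theorem along the cyclotomic tower then gives a surjection of $X_{\rm cyc}(\bQ_\infty^{\rm cyc}, A_{\F_\phi}^\dagger)$ modulo the augmentation ideal of $\phi(\cR)[[\Gamma]]$ onto $X_{\tt Gr}(\bQ, A_{\F_\phi}^\dagger)$ with finite kernel; since the target vanishes by the first step, the characteristic ideal of $X_{\rm cyc}(\bQ_\infty^{\rm cyc}, A_{\F_\phi}^\dagger)$ is prime to the augmentation ideal, and therefore $L_p^{\tt MTT}(\F_\phi, k_\phi/2) \neq 0$.

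Finally, the interpolation formula of Mazur--Tate--Teitelbaum expresses $L_p^{\tt MTT}(\F_\phi, k_\phi/2)$ as the product of $L(\F_\phi, k_\phi/2)$ with a nonzero period ratio involving Shimura's canonical periods and a local Euler factor at $p$ of the shape $(1-\boldsymbol{a}_p(\phi)^{-2}p^{k_\phi/2-1})(1-\boldsymbol{a}_p(\phi)^{-2}p^{k_\phi/2})$; since $\boldsymbol{a}_p\in\cR^\times$ is non-constant along $\mathcal{X}_a^o(\cR)$, this Euler factor vanishes at only finitely many $\phi$, and the conclusion $L(\F_\phi, k_\phi/2)\neq 0$ for all but finitely many $\phi\in\mathcal{X}_a^o(\cR)$ follows. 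The main obstacle will be verifying that Skinner--Urban's divisibility applies uniformly to the specializations $\F_\phi$: this requires that, for each such $\phi$, one can select an auxiliary imaginary quadratic field $\cK_\phi$ of parity opposite to (\ref{ass:gen-H}) satisfying the full list of hypotheses of \cite{SU}; fortunately the constraints that matter only depend on $\bar\rho_\F = \bar\rho_{\F_\phi}$, so such a $\cK_\phi$ can be chosen independently of $\phi$.
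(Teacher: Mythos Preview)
Your approach is essentially the same as the paper's, though more elaborate. The paper simply notes that the hypothesis combined with Lemma~\ref{lem:eq-ranks} forces $X_{\tt Gr}(\bQ,A_\F^\dagger)$ to be $\cR$-torsion, hence ${\rm Sel}_{\tt Gr}(\bQ,A_{\F_\phi}(1-k_\phi/2))$ is finite for all but finitely many $\phi$, and then cites \cite[Thm.~3.6.13]{SU} directly for the implication ``finite Selmer $\Rightarrow$ $L\neq 0$''. You unpack that citation by going through the cyclotomic main conjecture and Mazur's control theorem, which is fine but longer than necessary.

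Two minor points worth tightening. First, the appeal to Kato's divisibility is superfluous: Skinner--Urban's divisibility $(L_p^{\tt MTT}(\F_\phi))\supset{\rm Char}(X_{\rm cyc})$ alone already gives $L_p^{\tt MTT}(\F_\phi,k_\phi/2)\neq 0$ once ${\rm Char}(X_{\rm cyc})$ is prime to the augmentation ideal. Second, in your step~2 you write ``since the target vanishes by the first step'', but your first step only shows that the \emph{compact} Selmer group ${\rm Sel}_{\tt Gr}(\bQ,T_{\F_\phi}^\dagger)$ vanishes, not $X_{\tt Gr}(\bQ,A_{\F_\phi}^\dagger)$; what you actually need (and what follows from the rank equality you invoke) is only that $X_{\tt Gr}(\bQ,A_{\F_\phi}^\dagger)$ is \emph{finite}, which suffices for the characteristic ideal argument. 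Finally, your step~3 discussion of the Euler factor is unnecessary: once $L_p^{\tt MTT}(\F_\phi,k_\phi/2)\neq 0$, the interpolation formula immediately forces $L(\F_\phi,k_\phi/2)\neq 0$ regardless of whether the Euler factor could vanish.
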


\begin{proof}
Since the $\cR$-modules ${\rm Sel}_{\tt Gr}(\bQ,T_{\F}^\dagger)$ and $X_{\tt Gr}(\bQ,A_{\F}^\dagger)$ have the same rank by Lemma~\ref{lem:eq-ranks}, our hypothesis implies that $X_{\tt Gr}(\bQ,A_{\F}^\dagger)$ is $\cR$-torsion. Thus in particular 
${\rm Sel}_{\tt Gr}(\bQ,A_{\F_\phi}(1-k_\phi/2))$ is finite for all but finitely many $\phi$ as in the statement, and so the result follows from \cite[Thm.~3.6.13]{SU}.
\end{proof}



Our application to Greenberg's nonvanishing conjecture (in the case $w=1$) will build on an $\cR$-adic Gross--Zagier formula for the big Heegner point $\mathfrak{Z}_0$. In fact, we shall prove a formula of this type for the $\cR[[\Gamma_\cK^\ac]]$-adic family $\mathfrak{Z}_\infty$, and deduce the result for $\mathfrak{Z}_0$ by specialization at the trivial character.

Define the cyclotomic $\cR[[\Gamma_\cK^\ac]]$-adic height pairing
\begin{equation}\label{eq:lambda-ht}
\langle,\rangle_{\cK^\ac_\infty,\cR}^{\cyc}:{\rm Sel}_{\tt Gr}(\cK,\Tc)
\otimes_{\cR[[\Gamma_\cK^\ac]]}{\rm Sel}_{\tt Gr}(\cK,\Tc)^\iota
\rightarrow\mathcal{J}^{\rm cyc}\otimes_{\cR}\cR[[\Gamma_\cK^\ac]]\otimes_\cR F_\cR
\end{equation}
by 
\[
\langle a_\infty,b_\infty\rangle^{\rm cyc}_{\cK_\infty^\ac,\cR}=
\varprojlim_n\sum_{\sigma\in{\rm Gal}(\cK_n^\ac/\cK)}\langle a_n,b_n^\sigma\rangle^{\rm cyc}_{\cK_n^\ac,\cR}\cdot\sigma
\]
(using the fact that the $\cR$-adic height pairing $\langle,\rangle_{\cK_n^\ac,\cR}^{\rm cyc}$ have denominators that are bounded independently of $n$), and let the cyclotomic regulator $\mathcal{R}_{\rm cyc}\subset\cR[[\Gamma_\cK^\ac]]\otimes_\cR F_\cR$ 
be the characteristic ideal of the cokernel of $(\ref{eq:lambda-ht})$ (after dividing by the image of $(\gamma_{\rm cyc}-1)$ in $\mathcal{J}^{\rm cyc}$). 




Recall that since we assume that $\cK$ satisfies the generalized Heegner hypothesis (\ref{ass:gen-H}), the constant term $L_{p,0}^{\tt Hi}(\F^\dagger/\cK)_{\ac}$ in the expansion $(\ref{eq:2-exp})$ vanishes by Proposition~\ref{thm:hida-1}. The next result provides a first interpretation of the linear term $L_{p,1}^{\tt Hi}(\F^\dagger/\cK)_{\ac}$.

\begin{thm}\label{thm:3.1.5}
Assume hypotheses {\rm (h0)--(h6)}, and denote by $\mathcal{X}^{}_{\rm tors}$ the characteristic ideal of $X_{\tt Gr}(\cK_\infty^\ac,A_{\F}^\dagger)_{\rm tors}$. Then
\[
\mathcal{R}_{\rm cyc}^{}\cdot\mathcal{X}_{\rm tors}
=(L_{p,1}^{\tt Hi}(\F^\dagger/\cK)_{\ac})
\]
as ideals in $\cR[[\Gamma_\cK^\ac]]\otimes_{\cR}F_\cR$. 
\end{thm}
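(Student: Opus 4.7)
The plan is to combine the anticyclotomic Beilinson--Flach main conjecture with the Iwasawa-theoretic limit of Rubin's height formula to identify the derivative $L_{p,1}^{\tt Hi}(\F^\dagger/\cK)_{\ac}$ with the product $\mathcal{R}_{\rm cyc}\cdot\mathcal{X}_{\rm tors}$.

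First I would establish the anticyclotomic Beilinson--Flach Iwasawa main conjecture by combining the equivalence $(i)'\Leftrightarrow(ii)'$ of Theorem~\ref{thm:2-varIMC} with the anticyclotomic BDP Iwasawa main conjecture (\ref{eq:BDP-IMC}) from the proof of Theorem~\ref{thm:3-IMC-BDP}, giving
\[
{\rm Char}_{\cR[[\Gamma_\cK^\ac]]}(X_{{\tt Gr},0}(\cK_\infty^\ac,A_\F^\dagger)) = {\rm Char}_{\cR[[\Gamma_\cK^\ac]]}\biggl(\frac{{\rm Sel}_{{\tt Gr},\emptyset}(\cK,\bfT^{\dagger,\ac})}{\cR[[\Gamma_\cK^\ac]]\cdot\mathcal{BF}^{\dagger,\ac}}\biggr)
\]
as ideals up to powers of $p$. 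I would then insert this into the anticyclotomic Poitou--Tate exact sequence (obtained by projecting the one appearing in the proof of Theorem~\ref{thm:2-varIMC})
\[
0 \to {\rm Sel}_{\tt Gr}(\cK,\Tc) \to {\rm Sel}_{{\tt Gr},\emptyset}(\cK,\Tc) \xrightarrow{{\rm loc}_{\ppbar}} \rH^1(\cK_{\ppbar},\mathscr{F}^-\Tc) \to X_{\tt Gr}(\cK_\infty^\ac,A_\F^\dagger) \to X_{{\tt Gr},0}(\cK_\infty^\ac,A_\F^\dagger) \to 0,
\]
together with the fact $\mathcal{BF}^{\dagger,\ac}\in{\rm Sel}_{\tt Gr}(\cK,\Tc)$ from Corollary~\ref{cor:str-Gr}, to express $\mathcal{X}_{\rm tors}$ in terms of the characteristic ideal of ${\rm Sel}_{\tt Gr}(\cK,\Tc)/\cR[[\Gamma_\cK^\ac]]\cdot\mathcal{BF}^{\dagger,\ac}$ and that of the cokernel of ${\rm loc}_{\ppbar}$.

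Next I would apply the $\cR[[\Gamma_\cK^\ac]]$-adic limit of Rubin's height formula of Proposition~\ref{thm:rubin-ht}, via the definition (\ref{eq:lambda-ht}) of the $\cR[[\Gamma_\cK^\ac]]$-adic height pairing. Applied to $x\in{\rm Sel}_{\tt Gr}(\cK,\Tc)$, this expresses $\langle\mathcal{BF}^{\dagger,\ac},x\rangle_{\cK_\infty^\ac,\cR}^{\rm cyc}$ as $(\beta^\dagger(\mathds{1}),{\rm loc}_{\ppbar}(x))_{\cK_{\ppbar}}\otimes(\gamma_{\rm cyc}-1)$, with $\beta^\dagger(\mathds{1})\in\rH^1(\cK_{\ppbar},\mathscr{F}^-\Tc)$ being the class from Lemma~\ref{lem:3.1.1} sent to $L_{p,1}^{\tt Hi}(\F^\dagger/\cK)_{\ac}$ by ${\rm Col}_{\ac}^{(1),\dagger}$. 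Using the injectivity of ${\rm Col}_{\ac}^{(1),\dagger}$ (with pseudo-null cokernel) together with the perfect local Tate duality between $\rH^1(\cK_{\ppbar},\mathscr{F}^-\Tc)$ and $\rH^1_{\tt Gr}(\cK_{\ppbar},\Tc)$ (modulo torsion), the induced map ${\rm Sel}_{\tt Gr}(\cK,\Tc)\to\cR[[\Gamma_\cK^\ac]]\otimes_\cR F_\cR$ factors, up to a unit, as multiplication by $L_{p,1}^{\tt Hi}(\F^\dagger/\cK)_{\ac}$ times the localization ${\rm loc}_{\ppbar}$ suitably normalized.

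Combining these ingredients with the rank-one structure of ${\rm Sel}_{\tt Gr}(\cK,\Tc)$ from Theorem~\ref{thm:HP-MC}, the cyclotomic regulator $\mathcal{R}_{\rm cyc}$ (defined as the characteristic ideal of the cokernel of the height pairing) can be computed and shown to satisfy $(L_{p,1}^{\tt Hi}(\F^\dagger/\cK)_{\ac})=\mathcal{R}_{\rm cyc}\cdot\mathcal{X}_{\rm tors}$ as ideals in $\cR[[\Gamma_\cK^\ac]]\otimes_\cR F_\cR$. The main obstacle will be the careful bookkeeping of the various characteristic-ideal contributions coming from the Coleman map (including its congruence-ideal normalization), the local Tate duality pairing, the Poitou--Tate exact sequence, and the torsion in ${\rm Sel}_{\tt Gr}(\cK,\Tc)$; these factors must balance out precisely so that exactly $\mathcal{R}_{\rm cyc}$ and $\mathcal{X}_{\rm tors}$ remain.
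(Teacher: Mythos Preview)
Your outline is correct and follows essentially the same route as the paper: combine Rubin's height formula (Proposition~\ref{thm:rubin-ht} and Lemma~\ref{lem:3.1.1}) with the anticyclotomic Beilinson--Flach main conjecture (Theorem~\ref{thm:2-varIMC}(i)'$\Leftrightarrow$(ii)' together with (\ref{eq:BDP-IMC})) and a Poitou--Tate analysis, using the rank-one result of Theorem~\ref{thm:HP-MC}.

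A few technical points the paper makes explicit that your ``bookkeeping'' paragraph only gestures at: (a) one must first prove ${\rm Sel}_{\tt Gr}(\cK,\Tc)={\rm Sel}_{{\tt Gr},\emptyset}(\cK,\Tc)$, so that the Beilinson--Flach main conjecture really controls ${\rm Sel}_{\tt Gr}/\mathcal{BF}^{\dagger,\ac}$; (b) the Poitou--Tate sequence actually used to relate $\mathcal{X}_{\rm tors}$ to ${\rm Char}(X_{{\tt Gr},0})$ is not the one you wrote but rather its ``dual'' form
\[
0\rightarrow\frac{\rH^1_{\tt Gr}(\cK_{\ppbar},\Tc)}{{\rm loc}_{\ppbar}({\rm Sel}_{\tt Gr}(\cK,\Tc))}\rightarrow X_{{\tt Gr},\emptyset}(\cK_\infty^\ac,A_\F^\dagger)\rightarrow X_{\tt Gr}(\cK_\infty^\ac,A_\F^\dagger)\rightarrow 0,
\]
whose first term (call its characteristic ideal $\eta$) is torsion, making the extraction of characteristic ideals clean; (c) to pass from $X_{{\tt Gr},\emptyset}$ to $X_{{\tt Gr},0}$ one needs Lemma~\ref{lem:str-rel} and complex conjugation, which introduces an $\iota$-twist; and (d) the functional equation $\mathcal{X}_{\rm tors}^\iota=\mathcal{X}_{\rm tors}$ is what allows this $\iota$ to be absorbed. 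The height formula then gives $\mathcal{R}_{\rm cyc}\cdot{\rm Char}({\rm Sel}_{\tt Gr}/\mathcal{BF}^{\dagger,\ac})=(L_{p,1}^{\tt Hi}(\F^\dagger/\cK)_{\ac})\cdot\eta^\iota$, and the auxiliary factor $\eta^\iota$ cancels against the one produced in step~(b)--(d), yielding the result.
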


\begin{proof}
The height formula of Theorem~\ref{thm:rubin-ht} and Lemma~\ref{lem:3.1.1} immediately
yield the equality
\begin{equation}\label{eq:cor-ht}
\mathcal{R}_{\rm cyc}\cdot
{\rm Char}_{\cR[[\Gamma_\cK^\ac]]}\biggl(\frac{{\rm Sel}_{\tt Gr}(\cK,\Tc)}{\cR[[\Gamma_\cK^\ac]]\cdot\mathcal{BF}^{\dagger,\ac}}\biggr)
=(L_{p,1}^{\tt Hi}(\F^\dagger/\cK)_{\ac})\cdot\eta^\iota,
\end{equation}
where $\eta\subset\cR[[\Gamma_\cK^\ac]]$ is the characteristic ideal of
$\rH^1_{\tt Gr}(\cK_{\ppbar},\Tc)/{\rm loc}_{\ppbar}({\rm Sel}_{\tt Gr}(\cK,\Tc))$. 
We shall argue below that $\eta\neq 0$. Global duality yields the exact sequence
\begin{equation}\label{eq:PT2}
0\rightarrow\frac{\rH^1_{\tt Gr}(\cK_{\pp},\Tc)}
{{\rm loc}_{\pp}({\rm Sel}_{\tt Gr}(\cK,\Tc))}
\rightarrow X_{\emptyset,{\tt Gr}}(\cK_\infty^\ac,A_{\F}^\dagger)
\rightarrow X_{\tt Gr}(\cK_\infty^\ac,A_{\F}^\dagger)\rightarrow 0.
\end{equation}
Note that the left-most term in $(\ref{eq:PT2})$ is $\cR[[\Gamma_\cK^\ac]]$-torsion, since by Corollary~\ref{cor:str-Gr} the image of the map ${\rm loc}_{\pp}:{\rm Sel}_{\tt Gr}(\cK,\Tc)\rightarrow \rH^1_{\tt Gr}(\cK_\pp,\Tc)$ is nonzero and the target has $\cR[[\Gamma_\cK^\ac]]$-rank one. By Theorem~\ref{thm:HP-MC}, it follows that the middle term has $\cR[[\Gamma_\cK^\ac]]$-rank one, and by the action of complex conjugation the same is true for $X_{{\tt Gr},\emptyset}(\cK_\infty^\ac,A_{\F}^\dagger)$. Thus the nonvanishing of $\eta$ follows from the analogue of $(\ref{eq:PT2})$ for the prime $\ppbar$ (see (\ref{eq:PT2bar}) below).

By Lemma~\ref{lem:str-rel} the above also shows that $X_{{\tt Gr},0}(\cK_\infty^\ac,A_{\F}^\dagger)$ is $\cR[[\Gamma_\cK^\ac]]$-torsion, and counting ranks in the exact sequence 
\begin{align*}
0\rightarrow {\rm Sel}_{\tt Gr}(\cK,\Tc)\rightarrow{\rm Sel}_{{\tt Gr},\emptyset}(\cK,\Tc)\rightarrow&\frac{\rH^1(\cK_{\overline{\pp}},\Tc)}{\rH^1_{\tt Gr}(\cK_{\overline{\pp}},\Tc)}\\
&\rightarrow
X_{\tt Gr}(\cK_\infty^\ac,A_{\F}^\dagger)
\rightarrow X_{{\tt Gr},0}(\cK_\infty^\ac,A_{\F}^\dagger)\rightarrow 0,
\end{align*} 
we conclude that the first two terms in this sequence have $\cR[[\Gamma_\cK^\ac]]$-rank one. Since the quotient $\rH^1(\cK_{\overline{\pp}},\Tc)/\rH^1_{\tt Gr}(\cK_{\overline{\pp}},\Tc)$ has no $\cR[[\Gamma_\cK^\ac]]$-torsion, it follows that 
\begin{equation}\label{eq:Gr-rel}
{\rm Sel}_{\tt Gr}(\cK,\Tc)=
{\rm Sel}_{{\tt Gr},\emptyset}(\cK,\Tc).
\end{equation}
Taking $\cR[[\Gamma_\cK^\ac]]$-torsion in the analogue of $(\ref{eq:PT2})$ for $\ppbar$, that is
\begin{equation}\label{eq:PT2bar}
0\rightarrow\frac{\rH^1_{\tt Gr}(\cK_{\ppbar},\Tc)}
{{\rm loc}_{\ppbar}({\rm Sel}_{\tt Gr}(\cK,\Tc))}
\rightarrow X_{{\tt Gr},\emptyset}(\cK_\infty^\ac,A_{\F}^\dagger)
\rightarrow X_{\tt Gr}(\cK_\infty^\ac,A_{\F}^\dagger)\rightarrow 0,
\end{equation}
and applying Lemma~\ref{lem:str-rel} and the ``functional equation'' $\mathcal{X}_{\rm tors}^\iota=\mathcal{X}_{\rm tors}$ of \cite[p.~1464]{howard-PhD-I} we obtain
\begin{equation}\label{eq:takechar}
\begin{split}
\eta^\iota\cdot\mathcal{X}_{\rm tors}
&={\rm Char}_{\cR[[\Gamma_\cK^\ac]]}(X_{{\tt Gr},0}(\cK_\infty^\ac,A_{\F}^\dagger)).
\end{split}
\end{equation}
On the other hand, by the equivalence (i)'$\Longleftrightarrow$(ii)' in Theorem~\ref{thm:2-varIMC}, the equality (\ref{eq:BDP-IMC}) implies that
\[
{\rm Char}_{\cR[[\Gamma_\cK^\ac]]}(X_{{\tt Gr},0}(\cK_\infty^\ac,A_{\F}^\dagger))={\rm Char}_{\cR[[\Gamma_\cK^\ac]]}\biggl(\frac{{\rm Sel}_{{\tt Gr},\emptyset}(\cK,\Tc)}{\cR[[\Gamma_\cK^\ac]]\cdot\mathcal{BF}^{\dagger,\ac}}\biggr)
\]
as ideals in $\cR[[\Gamma_\cK^\ac]]\otimes_{\bZ_p}\bQ_p$, and so the result follows from the combination of (\ref{eq:cor-ht}), (\ref{eq:Gr-rel}), and (\ref{eq:takechar}).
\end{proof}

The aforementioned $\cR[[\Gamma_\cK^{\rm ac}]]$-adic Gross--Zagier formula for $\mathfrak{Z}_\infty$ is the following.

\begin{cor}\label{cor:I-GZ}
Assume hypotheses {\rm (h0)--(h6)}. Then we have the equality
\[
(L_{p,1}^{\tt Hi}(\F^\dagger/\cK)_{\ac})=(\langle\mathfrak{Z}_\infty,\mathfrak{Z}_\infty\rangle_{\cK_\infty^{\ac},\cR}^{\rm cyc})
\]
as ideals of $\cR[[\Gamma_\cK^\ac]]\otimes_{\cR}\cR$. 
\end{cor}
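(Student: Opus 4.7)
The plan is to combine the Heegner-type main conjecture of Theorem~\ref{thm:HP-MC} with the height-theoretic identity of Theorem~\ref{thm:3.1.5} via a rank-one bilinearity argument. Set $\Lambda:=\cR[[\Gamma_\cK^\ac]]$ and $M:={\rm Sel}_{\tt Gr}(\cK,\Tc)$; by Proposition~\ref{prop:HPinSel} and the compatibility under corestriction, $\mathfrak{Z}_\infty\in M$. Under (h2), (h4), (h5), the restriction $\bar{\rho}_\F\vert_{G_\cK}$ is irreducible by \cite[Lem.~2.8.1]{skinner}, so Lemma~\ref{lem:no-tors} shows that $M\subset\rH^1(\mathfrak{G}_{\cK,\Sigma},\Tc)$ is $\Lambda$-torsion-free. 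By Theorem~\ref{thm:HP-MC}, moreover, $M$ has $\Lambda$-rank one and $\mathfrak{Z}_\infty$ is non-torsion.

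Since $\cR$ is regular by (h0), $\Lambda$ is a regular local ring, hence a UFD by Auslander--Buchsbaum. Therefore the reflexive hull $M^{**}$ is free of rank one over $\Lambda$; I fix a generator $e$ of $M^{**}$ and write $\mathfrak{Z}_\infty=\beta\cdot e$ for a unique $\beta\in\Lambda$. Because the inclusion $M\hookrightarrow M^{**}$ has pseudo-null cokernel, one has ${\rm Char}_\Lambda(M/\Lambda\mathfrak{Z}_\infty)=(\beta)$, and hence Theorem~\ref{thm:HP-MC} reads $\mathcal{X}_{\rm tors}=(\beta)^2$. Dually, a direct unwinding of the definition of the regulator in the rank-one case (after the identification $\mathcal{J}^{\rm cyc}\cong\cR$ by dividing by $\gamma_{\rm cyc}-1$) gives
\[
\mathcal{R}_{\rm cyc}=(\langle e,e\rangle^{\rm cyc}_{\cK_\infty^\ac,\cR})\quad\textrm{in}\;\;\Lambda\otimes_\cR F_\cR.
\]

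The $\iota$-sesquilinearity of the pairing $(\ref{eq:lambda-ht})$, immediate from its defining formula together with the commutativity of $\Gamma_\cK^\ac$, then yields
\[
(\langle\mathfrak{Z}_\infty,\mathfrak{Z}_\infty\rangle^{\rm cyc}_{\cK_\infty^\ac,\cR})=(\beta\cdot\beta^\iota\cdot\langle e,e\rangle^{\rm cyc}_{\cK_\infty^\ac,\cR})=(\beta)\cdot(\beta^\iota)\cdot\mathcal{R}_{\rm cyc}.
\]
Invoking the functional equation $\mathcal{X}_{\rm tors}^\iota=\mathcal{X}_{\rm tors}$ of \cite[p.~1464]{howard-PhD-I} (already used in the proof of Theorem~\ref{thm:3.1.5}), the equality $(\beta)^2=(\beta^\iota)^2$ forces $(\beta)=(\beta^\iota)$ since $\Lambda$ is a UFD. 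Combining with Theorem~\ref{thm:3.1.5},
\[
(\langle\mathfrak{Z}_\infty,\mathfrak{Z}_\infty\rangle^{\rm cyc}_{\cK_\infty^\ac,\cR})=(\beta)^2\cdot\mathcal{R}_{\rm cyc}=\mathcal{X}_{\rm tors}\cdot\mathcal{R}_{\rm cyc}=(L_{p,1}^{\tt Hi}(\F^\dagger/\cK)_\ac),
\]
which is the claim.

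The main technical point is to confirm that passage between $M$ and its reflexive hull $M^{**}$ does not corrupt either the index identification ${\rm Char}_\Lambda(M/\Lambda\mathfrak{Z}_\infty)=(\beta)$ or the rank-one computation of $\mathcal{R}_{\rm cyc}$. Both simplifications are safe because characteristic ideals are insensitive to pseudo-null modifications and because $\Lambda$ is a UFD, so that no divisorial class-group obstruction prevents the extraction of the square root $(\beta)=(\beta^\iota)$ from $(\beta)^2=(\beta^\iota)^2$.
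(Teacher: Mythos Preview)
Your argument is correct and follows essentially the same route as the paper's proof: both combine Theorem~\ref{thm:3.1.5} with Theorem~\ref{thm:HP-MC} via the rank-one identity $(\langle\mathfrak{Z}_\infty,\mathfrak{Z}_\infty\rangle)=\mathcal{R}_{\rm cyc}\cdot{\rm Char}_\Lambda(M/\Lambda\mathfrak{Z}_\infty)\cdot{\rm Char}_\Lambda(M/\Lambda\mathfrak{Z}_\infty)^\iota$ and the functional equation of \cite[p.~1464]{howard-PhD-I}. You have simply made explicit what the paper leaves implicit, namely the reflexive-hull/UFD justification for this rank-one computation and the extraction of $(\beta)=(\beta^\iota)$ from $(\beta)^2=(\beta^\iota)^2$.
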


\begin{proof}
Since ${\rm Sel}_{\tt Gr}(\cK,\Tc)$ has $\cR[[\Gamma_\cK^\ac]]$-rank one by Theorem~\ref{thm:HP-MC} and $\mathfrak{Z}_\infty$ is not $\cR[[\Gamma_\cK]]$-torsion, the regulator $\mathcal{R}_{\rm cyc}$ of $(\ref{eq:lambda-ht})$ satisfies
\[
(\langle\mathfrak{Z}_\infty,\mathfrak{Z}_\infty\rangle_{\cK_\infty^{\ac},\cR}^{\rm cyc})=\mathcal{R}_{\rm cyc}\cdot {\rm Char}_{\cR[[\Gamma_\cK^\ac]]}\biggl(\frac{{\rm Sel}_{\tt Gr}(\cK,\Tc)}{\cR[[\Gamma_\cK^\ac]]\cdot\mathfrak{Z}_\infty}\biggr)\cdot{\rm Char}_{\cR[[\Gamma_\cK^\ac]]}\biggl(\frac{{\rm Sel}_{\tt Gr}(\cK,\Tc)}{\cR[[\Gamma_\cK^\ac]]\cdot\mathfrak{Z}_\infty}\biggr)^\iota.
\]
By the ``functional equation'' of \cite[p.~1464]{howard-PhD-I}, the result thus follows from the combination of Theorem~\ref{thm:3.1.5} and the equality of characteristic ideals in Theorem~\ref{thm:HP-MC}.
\end{proof}



Now we can conclude the proof of our application to Greenberg's nonvanishing conjecture. 

\begin{thm}\label{thm:Gr-1}
Assume that:
\begin{itemize}
\item[(i)] $\cR$ is regular,
\item[(ii)] $\bar{\rho}_{\F}$ is irreducible, 
\item[(iii)]{} some specialization $\F_\phi$ is the $p$-stabilization of a newform $f\in S_2(\Gamma_0(N))$,
\item[(iv)] $N$ is squarefree,
\item[(v)] there are at least two primes $\ell\vert N$ at which $\bar{\rho}_{\F}$ is ramified. 
\end{itemize}	
If ${\rm Sel}_{\tt Gr}(\Q,T_{\F}^\dagger)$ has $\cR$-rank one
and the $\I$-adic height pairing
$\langle,\rangle_{\Q,\I}^{\rm cyc}$ is non-degenerate, then
\begin{equation}\label{eq:gen}
\frac{d}{ds}L_p^{\tt MTT}(\F_\phi,s)\biggr\vert_{s=k_\phi/2}\neq 0,
\nonumber
\end{equation}
for all but finitely many $\phi\in\mathcal{X}_a^o(\cR)$.
\end{thm}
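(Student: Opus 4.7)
The plan is to deduce this theorem from the $\cR$-adic Gross--Zagier formula of Corollary \ref{cor:I-GZ}, specialized at the trivial anticyclotomic character, applied over a carefully chosen auxiliary imaginary quadratic field $\cK$, and combined with the rank-zero nonvanishing result of Theorem \ref{thm:Gr+1} applied to the quadratic twist $\F\otimes\epsilon_\cK$.

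First, I would choose an imaginary quadratic field $\cK$ of discriminant prime to $Np$ satisfying hypotheses (h4)--(h6): using assumption (v), two primes $\ell\vert N$ at which $\bar\rho_\F$ is ramified may be placed in $N^-$ (with the remaining primes of $N$ lying in $N^+$), securing (h4) and (h5), while simultaneously arranging that $p$ splits in $\cK$. Invoking a nonvanishing theorem for quadratic twists of modular $L$-values (of Friedberg--Hoffstein type), I would further impose that $L(\F_{\phi_0}\otimes\epsilon_\cK,k_{\phi_0}/2)\neq 0$ for the weight-two specialization $\phi_0$ from (iii). Kato's Euler system then implies that $\Sel(\bQ, T_{\F_{\phi_0}}\otimes\epsilon_\cK)$ is finite, and a standard control argument promotes this to $\cR$-torsionness of ${\rm Sel}_{\tt Gr}(\bQ, (T_\F\otimes\epsilon_\cK)^\dagger)$. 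The $\Gal(\cK/\bQ)$-action produces the decomposition
\[
{\rm Sel}_{\tt Gr}(\cK, T_\F^\dagger)\simeq{\rm Sel}_{\tt Gr}(\bQ, T_\F^\dagger)\oplus{\rm Sel}_{\tt Gr}(\bQ, (T_\F\otimes\epsilon_\cK)^\dagger),
\]
and analogously for the cyclotomic $\cR$-adic height pairing, so the rank-one hypothesis of the theorem together with our choice of $\cK$ forces ${\rm Sel}_{\tt Gr}(\cK, T_\F^\dagger)$ to have $\cR$-rank one and the pairing $\langle,\rangle_{\cK,\cR}^{\rm cyc}$ to be nondegenerate modulo $\cR$-torsion. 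By Theorem \ref{thm:converse-How} the big Heegner point $\mathfrak{Z}_0$ is then not $\cR$-torsion, and hence $\langle\mathfrak{Z}_0,\mathfrak{Z}_0\rangle_{\cK,\cR}^{\rm cyc}\neq 0$ in $\mathcal{J}^{\rm cyc}\otimes_\cR F_\cR$.

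Specializing Corollary \ref{cor:I-GZ} at $\mathds{1}_\ac\in\Gamma_\cK^\ac$ now yields $L_{p,1}^{\tt Hi}(\F^\dagger/\cK)_\ac(\mathds{1}_\ac)\neq 0$ in $\cR\otimes_{\bZ_p}\bQ_p$, so $\phi(L_{p,1}^{\tt Hi}(\F^\dagger/\cK)_\ac(\mathds{1}_\ac))\neq 0$ for all but finitely many $\phi\in\mathcal{X}_a^o(\cR)$. Using the cyclotomic factorization of Theorem \ref{thm:cyc-res} together with the vanishing of the constant term $L_{p,0}^{\tt Hi}(\F^\dagger/\cK)_\ac$ provided by Proposition \ref{thm:hida-1}, this value is identified, up to a nonzero $p$-adic logarithm factor, with
\[
\frac{d}{ds}\bigl[L_p^{\tt MTT}(\F_\phi,s)\cdot L_p^{\tt MTT}(\F_\phi\otimes\epsilon_\cK,s)\bigr]\bigg|_{s=k_\phi/2}
=L_p^{\tt MTT}(\F_\phi\otimes\epsilon_\cK,k_\phi/2)\cdot\frac{d}{ds}L_p^{\tt MTT}(\F_\phi,s)\bigg|_{s=k_\phi/2},
\]
where the equality uses $L_p^{\tt MTT}(\F_\phi,k_\phi/2)=0$, itself forced by the functional equation and the sign $w=1$ predicted by Remark \ref{rem:0or1} under the rank-one hypothesis. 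Finally, Theorem \ref{thm:Gr+1} applied to the Hida family $\F\otimes\epsilon_\cK$---whose residual and tame character hypotheses are inherited from $\F$ since $D_\cK$ is coprime to $Np$, and whose $\cR$-torsion Selmer condition was built into our choice of $\cK$---produces $L_p^{\tt MTT}(\F_\phi\otimes\epsilon_\cK,k_\phi/2)\neq 0$ for all but finitely many $\phi\in\mathcal{X}_a^o(\cR)$, and the claimed nonvanishing follows.

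The main obstacle will be the construction of $\cK$ above: simultaneously meeting the ramification and splitting constraints (h4)--(h6) and rendering ${\rm Sel}_{\tt Gr}(\bQ, (T_\F\otimes\epsilon_\cK)^\dagger)$ $\cR$-torsion. Existence of quadratic characters with prescribed local behaviour and nonvanishing twisted central $L$-value is classical, but the passage from finiteness of the specialized Selmer group at a single weight-two point to generic $\cR$-torsionness of the Hida family Selmer group requires some care with Nekov{\'a}{\v{r}}-style control theorems.
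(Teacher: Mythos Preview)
Your proposal is correct and follows essentially the same route as the paper: choose $\cK$ via Friedberg--Hoffstein so that (h4)--(h6) hold and the twisted central $L$-value is nonzero, use Kato plus control to make ${\rm Sel}_{\tt Gr}(\bQ,T_\F^\dagger\otimes\epsilon_\cK)$ torsion, decompose the Selmer group over $\cK$, apply Theorem~\ref{thm:converse-How} to get $\mathfrak{Z}_0$ non-torsion and hence $\langle\mathfrak{Z}_0,\mathfrak{Z}_0\rangle_{\cK,\cR}^{\rm cyc}\neq 0$, specialize Corollary~\ref{cor:I-GZ} at the trivial anticyclotomic character, and factor via Theorem~\ref{thm:cyc-res}.

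One minor observation: your final appeal to Theorem~\ref{thm:Gr+1} for the twist $\F\otimes\epsilon_\cK$ is unnecessary. Once you know that the product $L_{p,0}^{\tt MTT}(\F_\phi^\dagger\otimes\epsilon_\cK)\cdot L_{p,1}^{\tt MTT}(\F_\phi^\dagger)$ is nonzero for all but finitely many $\phi$, both factors are automatically nonzero for those $\phi$; there is no need to separately control the twisted constant term. The paper proceeds exactly this way. Conversely, you are slightly more explicit than the paper in two places: you justify $L_{p,0}^{\tt MTT}(\F_\phi^\dagger)=0$ via the sign $w=1$ forced by the rank-one hypothesis (the paper's derivation of the product formula for the linear term tacitly uses this), and you spell out how non-degeneracy of $\langle,\rangle_{\cK,\cR}^{\rm cyc}$ follows from the hypothesis over $\bQ$ together with the torsionness of the $\epsilon_\cK$-component.
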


\begin{proof}
Let $\phi\in\mathcal{X}_a^o(\cR)$ be such that $\F_\phi$ is the ordinary $p$-stabilization of a newform $f\in S_2(\Gamma_0(N))$. Let $\ell_1$ and $\ell_2$ be two distinct primes as in hypothesis (v), and choose an imaginary quadratic field $\cK$ such that the following hold:
\begin{itemize}
\item $\ell_1$ and $\ell_2$ are inert in $\cK$,
\item every prime dividing $N^+:=N/\ell_1\ell_2$ splits in $\cK$,
\item $p$ splits in $\cK$,
\item $L(f\otimes\epsilon_\cK,1)\neq 0$, where $\epsilon_\cK$ is the quadratic character corresponding to $\cK$. 
\end{itemize}

Note that the existence of $\cK$ is ensured by \cite{FH}, and that, so chosen, $\cK$ satisfies (\ref{ass:gen-H}) with $N^-=\ell_1\ell_2$. Now, the action of a complex conjugation $\tau$ combined with the restriction map induces an isomorphism
\begin{equation}\label{eq:dec}
{\rm Sel}_{\tt Gr}(\cK,T_{\F}^\dagger)\simeq{\rm Sel}_{\tt Gr}(\Q,T_{\F}^\dagger)\oplus{\rm Sel}_{\tt Gr}(\Q,T_{\F}^\dagger\otimes\epsilon_\cK),
\end{equation}
where the first and second summands are identified with the $+$ and $-$ eigenspaces for the action of $\tau$, respectively (see \cite[Lem.~3.1.5]{SU}). By Kato's work \cite{Kato295}, the nonvanishing of $L(f\otimes\epsilon_\cK,1)$ implies that ${\rm Sel}(\bQ,T_{f}\otimes\epsilon_\cK)$ is finite, and so by the control theorem for ${\rm Sel}_{\tt Gr}(\Q,T_{\F}^\dagger\otimes\epsilon_\cK)$ (see the exact sequence in  \cite[Cor.~3.4.3]{howard-invmath}) we conclude that ${\rm Sel}_{\tt Gr}(\Q,T_{\F}^\dagger\otimes\epsilon_\cK)$ is $\cR$-torsion, and so 
\[
{\rm rank}_\cR\;{\rm Sel}_{\tt Gr}(\cK,T_{\F}^\dagger)={\rm rank}_\cR\;{\rm Sel}_{\tt Gr}(\Q,T_{\F}^\dagger)=1	
\]
by (\ref{eq:dec}) and our assumption. In particular, since by (i)--(iv) we are assuming (h0)--(h3), and (h4)--(h6) hold by our choice of $\cK$,  Theorem~\ref{thm:converse-How} yields the non-triviality of the class $\mathfrak{Z}_0$, and so the element $\langle\mathfrak{Z}_0,\mathfrak{Z}_0\rangle_{\cK,\cR}^{\rm cyc}\in\cR$ is non-zero by our hypothesis of non-degeneracy.
	
Let $L_p^{\tt Hi}(\F^\dagger/\cK)_{\rm cyc}$ be the image of ${\rm tw}_{\Theta^{-1}}(L_p^{\tt Hi}(\F/\cK))$ under the natural projection $\cR[[\Gamma_\cK]]\twoheadrightarrow\cR[[\Gamma_\cK^{\rm cyc}]]$. By Theorem~\ref{thm:cyc-res}, for every $\phi\in\mathcal{X}_a^o(\cR)$ we have the factorization
\begin{equation}\label{eq:factor-cyc}
\phi(L_p^{\tt Hi}(\F^\dagger/\cK)_{\rm cyc})={\rm tw}_{\Theta_\phi^{-1}}(L_p^{\tt MTT}(\F_\phi))\cdot {\rm tw}_{\Theta_\phi^{-1}}(L_p^{\tt MTT}(\F_\phi\otimes\epsilon_\cK))
\end{equation}
up to a unit in $\phi(\cR)[[\Gamma^{\rm cyc}]]^\times$. Expand
\begin{align*}
\phi(L_p^{\tt Hi}(\F^\dagger/\cK)_{\rm cyc})&=L_{p,0}^{\tt Hi}(\F_\phi^\dagger/\cK)+L_{p,1}^{\tt Hi}(\F_\phi^\dagger/\cK)\cdot(\gamma_{\rm cyc}-1)+\cdots,\\
{\rm tw}_{\Theta_\phi^{-1}}(L_p^{\tt MTT}(\F_\phi))&=L_{p,0}^{\tt MTT}(\F_\phi^\dagger)+L_{p,1}^{\tt MTT}(\F_\phi^\dagger)\cdot(\gamma_{\rm cyc}-1)+\cdots,\\
{\rm tw}_{\Theta_\phi^{-1}}(L_p^{\tt MTT}(\F_\phi\otimes\epsilon_\cK))&=L_{p,0}^{\tt MTT}(\F_\phi^\dagger\otimes\epsilon_\cK)+L_{p,1}^{\tt MTT}(\F_\phi^\dagger\otimes\epsilon_\cK)\cdot(\gamma_{\rm cyc}-1)+\cdots,
\end{align*}
as power series in $\gamma_{\rm cyc}-1$, and note that by the $p$-adic Mellin transform we have
\[
\frac{d}{ds}L_p^{\tt MTT}(\F_\phi,s)\biggr\vert_{s=k_\phi/2}\neq 0\quad\Longleftrightarrow\quad L_{p,1}^{\tt MTT}(\F^\dagger_\phi)\neq 0
\]
(see \cite[(24)]{venerucci-p-conv}). The constant term $L_{p,0}^{\tt Hi}(\F_\phi^\dagger/\cK)\in\cR$ vanishes by Proposition~\ref{thm:hida-1}, and so the factorization $(\ref{eq:factor-cyc})$ yields the following equality up to unit in $\cO_\phi^\times$:
\begin{equation}\label{eq:desc}
L_{p,1}^{\tt Hi}(\F^\dagger_\phi/\cK)=L_{p,1}^{\tt MTT}(\F_\phi^\dagger)\cdot L_{p,0}^{\tt MTT}(\F^\dagger_\phi\otimes\epsilon_\cK). 
\end{equation}
Finally, since by definition $L_{p,1}^{\tt Hi}(\F^\dagger/\cK)\in\cR$ agrees with the image of the linear term $L_{p,1}^{\tt Hi}(\F^\dagger/\cK)_{\ac}$ in $(\ref{eq:2-exp})$ under the augmentation map $\cR[[\Gamma_\cK^{\ac}]]\rightarrow\cR$, from Corollary~\ref{cor:I-GZ} specialized at the trivial character of $\Gamma_\cK^\ac$ and $(\ref{eq:desc})$ we see that  
\begin{align*}
\langle\mathfrak{Z}_0,\mathfrak{Z}_0\rangle^{\rm cyc}_{\cK,\I}\neq 0\quad&\Longrightarrow\quad
L_{p,1}^{\tt Hi}(\F_\phi^\dagger/\cK)\neq 0,\quad\textrm{for almost all $\phi\in\mathcal{X}_a^o(\cR)$}\\\
&\Longrightarrow\quad L_{p,1}^{\tt MTT}(\F^\dagger_\phi)\neq 0,\quad\quad\textrm{for almost all $\phi\in\mathcal{X}_a^o(\cR)$},
\end{align*}
concluding the proof of Theorem~\ref{thm:Gr-1}.
\end{proof}


\bibliographystyle{amsalpha}
\bibliography{Iwasawa-big}

\end{document}